\newtheorem*{MainThm}{Main Theorem}{\bf}{\it}
\newtheorem{theorem}{Theorem}[section]
\newtheorem{lemma}[theorem]{Lemma}
\newtheorem{proposition}[theorem]{Proposition}
\theoremstyle{definition}
\theoremstyle{remark}
\newtheorem{remark}[theorem]{Remark}
\numberwithin{equation}{section}
\newcommand{\R}{\mathbb{R}}
\newcommand{\C}{\mathbb{C}}
\newcommand{\ad}{\mathrm{ad}}
\newcommand{\vol}{\mathrm{vol}}
\newcommand{\rad}{\mathrm{rad}}
\newcommand{\uSO}{\widetilde{\mathrm{SO}}}
\newcommand{\SL}{\mathrm{SL}}
\newcommand{\Kill}{\mathrm{Kill}}
\newcommand{\Iso}{\mathrm{Iso}}
\newcommand{\iso}{\mathfrak{Iso}}
\newcommand{\OO}{\mathcal{O}}
\newcommand{\HH}{\mathcal{H}}
\newcommand{\GG}{\mathcal{G}}
\newcommand{\VV}{\mathcal{V}}
\newcommand{\g}{\mathfrak{g}}
\newcommand{\h}{\mathfrak{h}}
\newcommand{\s}{\mathfrak{s}}
\newcommand{\su}{\mathfrak{su}}
\newcommand{\symp}{\mathfrak{sp}}
\newcommand{\Z}{\mathbb{Z}}
\newcommand{\SO}{\mathrm{SO}}
\newcommand{\so}{\mathfrak{so}}
\newcommand{\Tr}{\mathrm{tr}}
\newcommand{\gsl}{\mathfrak{sl}}
\begin{document}

\title{On low-dimensional manifolds with isometric
  $\mathrm{SO}_0(p,q)$-actions}

\author{Gestur Olafsson}
\address{Department of Mathematics, 303 Lockett Hall, Louisiana State
  University, Baton Rouge, LA, 70803, USA}
\email{olafsson@math.lsu.edu}
\thanks{The first author was supported by DMS grants 0402068 and
  0801010}

\author{Raul Quiroga-Barranco}
\address{Centro de Investigaci\'on en Matem\'aticas, Apartado Postal
  402, Guanajuato, Guanajuato, 36250, Mexico}
\email{quiroga@cimat.mx}
\thanks{The second author was supported by Conacyt, Concyteg and SNI}

\subjclass{57S20, 53C50, 53C24}
\keywords{Pseudo-Riemannian manifolds, simple Lie groups, rigidity
  results}

\begin{abstract}
  Let $G$ be a non-compact simple Lie group with Lie algebra
  $\g$. Denote with $m(\g)$ the dimension of the smallest non-trivial
  $\g$-module with an invariant non-degenerate symmetric bilinear
  form. For an irreducible finite volume pseudo-Riemannian analytic
  manifold $M$ it is observed that $\dim(M) \geq \dim(G) + m(\g)$ when
  $M$ admits an isometric $G$-action with a dense orbit. The Main
  Theorem considers the case $G = \widetilde{\mathrm{SO}}_0(p,q)$
  providing an explicit description of $M$ when the bound is
  achieved. In such case, $M$ is (up to a finite covering) the
  quotient by a lattice of either $\widetilde{\mathrm{SO}}_0(p+1,q)$
  or $\widetilde{\mathrm{SO}}_0(p,q+1)$.
\end{abstract}

\maketitle

\section*{Introduction}
\noindent
Let $G$ be a connected non-compact simple Lie group acting
isometrically on a connected analytic manifold $M$ with a finite
volume pseudo-Riemannian metric. Following Zimmer's program, it has
been shown that such actions are rigid in the sense of having
distinguished properties that restrict the possibilities for $M$ (see
for example \cite{Gromov,Zimmer-Lorentz,Zimmer-rigid}). The general
belief is that any such action, with some additional non-triviality
conditions, must essentially be an algebraic double coset of the form
$K\backslash H /\Gamma$. More precisely, such coset is given by some
Lie group $H$ together with a homomorphism $G \rightarrow H$, a
lattice $\Gamma \subset H$ and a compact subgroup $K \subset H$
centralizing the image of $G$ in $H$. The $G$-action is then given by
the natural left action on $K\backslash H /\Gamma$. We note that when
$H$ is semisimple these $G$-actions are isometric for a metric induced
by the Killing form of the Lie algebra of $H$. Some results have
already been obtained in \cite{Quiroga-Annals,Quiroga-Z} proving that
suitable geometric conditions imply that such $G$-actions are of the
double coset type. We refer to
\cite{Bader-Frances-Melnick,Bader-Nevo,Frances-Melnick} for similar
related results.

In this work we observe, that for $M$ complete and weakly irreducible
with the $G$-action non-transitive but with a dense orbit, the
dimension of $M$ has a bound from below in terms of the representation
theoretic properties of $\g$, the Lie algebra of $G$. More precisely,
it is noted in Proposition~\ref{prop-bound} that in this case we have
\[
\dim(M) \geq \dim(G) + m(\g),
\]
where $m(\g)$ denotes the dimension of the smallest non-trivial
representation of $\g$ that admits an invariant non-degenerate
symmetric bilinear form. Recall that a connected pseudo-Riemannian
manifold is weakly irreducible if the tangent space at some (and hence
any) point has no proper non-degenerate subspaces invariant under the
restricted holonomy group at that point.

For our main result, we consider with further detail the case $G =
\uSO_0(p,q)$, the universal covering group of $\SO_0(p,q)$. For
$\uSO_0(p,q)$-actions with $p,q \geq 1$ and $p+q \geq 4$, the
following result establishes that the lower bound just considered can
be achieved only for double coset models. Note that for a $G$-action
on a manifold $M$ and $X$ in the Lie algebra of $G$ we denote by $X^*$
the vector field on $M$ whose one-parameter group of diffeomorphisms
is given by $(\exp(tX))_t$ through the $G$-action on $M$.

\begin{MainThm}
  Let $M$ be a connected analytic pseudo-Riemannian manifold. Suppose
  that $M$ is complete weakly irreducible, has finite volume and
  admits an analytic and isometric $\uSO_0(p,q)$-action with a dense
  orbit, for some integers $p,q$ such that $p,q \geq 1$ and $n =
  p+q\geq 5$. In this case we have $m(\so (p,q)) = n$. If the
  equality:
  \[
  \dim(M) = \dim(\uSO_0(p,q)) + m(\so (p,q)) = \frac{n(n+1)}{2},
  \]
  holds, then for $H$ either $\uSO_0(p,q+1)$ or $\uSO_0(p+1,q)$ there
  exist:
  \begin{itemize}
  \item[(1)] a lattice $\Gamma \subset H$, and
  \item[(2)] an analytic finite covering map $\varphi : H/\Gamma
    \rightarrow M$,
  \end{itemize}
  such that $\varphi$ is $\uSO_0(p,q)$-equivariant, where the
  $\uSO_0(p,q)$-action on $H/\Gamma$ is induced by some non-trivial
  homomorphism $\uSO_0(p,q) \rightarrow H$. Furthermore, we can
  rescale the metric on $M$ along the $\uSO_0(p,q)$-orbits and their
  normal bundle to assume that $\varphi$ is a local isometry for the
  bi-invariant pseudo-Riemannian metric on $H$ given by the Killing
  form of its Lie algebra. The result holds for the case $(p,q) =
  (3,1)$ as well if we further assume that $X^* \perp Y^*$ on $M$ for
  all $X \in \su(2)$ and $Y \in i\su(2)$ under the identification
  $\so(3,1) \simeq \mathfrak{sl}(2,\C)$.
\end{MainThm}

Note that there is no $\R$-rank restriction, and so this result
applies to the groups $\uSO_0(p,1)$ when $p \geq 3$. Thus, the Main
Theorem provides a rigidity result for $\SO_0(p,1)$-actions.

The proof of the Main Theorem is based on the application of
representation theory to the Killing vector fields centralizing the
$G$-action, where the latter are as found in Gromov-Zimmer's machinery
(see \cite{Gromov,Zimmer-rigid}). With respect to centralizing Killing
fields, our main ingredient is Proposition~\ref{prop-g(x)} as already
found in \cite{GCT,Gromov,Quiroga-Z,Zimmer-rigid} with varying
assumptions on the manifold $M$ acted upon by
$G$. Proposition~\ref{prop-g(x)} ensures the existence of a Lie
algebra $\g(x)$, isomorphic to $\g$, of Killing fields vanishing at a
point $x$ on the universal cover of $M$ and with some additional
properties. The Lie algebra $\g(x)$ provides a $\g$-module structure
to the tangent space $T_xM$ that allows to use representation theory
to control the behavior of the normal to the orbits. Such $\g$-module
structure is then related to the Lie algebra $\HH$ of Killing vector
fields centralizing the $G$-action (see
Lemma~\ref{lemma-ev-homomorphism}), thus again providing control on
$\HH$. By Gromov-Zimmer's machinery, the Lie algebra of $\HH$ has an
open dense orbit in the universal covering space of $M$. Also, the Lie
algebra $\HH$ yields a Lie group action constructed in
Section~\ref{section-Killing-fields}. These elements together allow to
obtain a very detailed description of the structure of $\HH$, which is
provided in Section~\ref{section-structure-centralizer}. Here again,
the application of representation theory is a key element. Finally,
Section~\ref{section-structure-tildeM} completes the proof of the Main
Theorem using the Lie group action induced by $\HH$ and the weak
irreducibility assumption on $M$.
Some needed facts about the Lie algebra $\so (p,q)$ are collected in
Appendix \ref{section-Lie-algebras}. We will use the notation from the
introduction and the Appendix without further comments.

\setcounter{section}{0}

\section{Isometric actions of simple Lie groups and Killing fields}
\label{section-Killing-fields}
\noindent
In this section, we let $G$ be a connected non-compact simple Lie group
with Lie algebra $\g$ and $M$ a connected finite volume
pseudo-Riemannian manifold. Hence, every isometric $G$-action on $M$
with a dense orbit is locally free (see \cite{Szaro,Zeghib}) and so
the orbits define a foliation that we will denote with $\OO$. The
bundle $T\OO$ tangent to the foliation $\OO$ is a trivial vector
bundle isomorphic to $M \times \g$, under the isomorphism $M \times \g
\rightarrow T\OO$ given by $(x, X) \mapsto X^*_x$.  This also defines
an isomorphism of every fiber $T_x\OO$ with $\g$.  We will refer to it
as the natural isomorphism between $T_x\OO$ and $\g$. Recall that, as
before and in the rest of this article, for $X$ in the Lie algebra of
a group acting on a manifold, we denote by $X^*$ the vector field on
the manifold whose one-parameter group of diffeomorphisms is given by
$(\exp(tX))_t$ through the action on the manifold.  On the other hand,
we will have the occasion to consider both left and right actions and
so our convention is to assume that an action is on the left unless
otherwise specified.

For any given pseudo-Riemannian manifold $N$, we will denote by
$\Kill(N)$ the globally defined Killing vector fields of $N$. Also, we
denote by $\Kill_0(N,x)$ the Lie algebra of globally defined Killing
vector fields that vanish at the given point $x$. The following result
is an easy application of the Jacobi identity and the fact that
Killing vector fields are derivations of the corresponding metric.
Also note that, in the rest of this work, for a vector space $W$ with
a non-degenerate symmetric bilinear form, we will denote with $\so(W)$
the Lie algebra of linear maps on $W$ that are skew-symmetric with
respect to the bilinear form.

\begin{lemma}\label{lemma-lambda}
  Let $N$ be a pseudo-Riemannian manifold and $x\in N$. Then, the map
  $ \lambda_x : \Kill_0(N,x) \rightarrow \so(T_x N)$ given by
  $\lambda_x(Z)(v) = [Z,V]_x$, where $V$ is any vector field such that
  $V_x = v$, is a well defined homomorphism of Lie algebras.
\end{lemma}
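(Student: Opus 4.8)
The plan is to check, in order, that $\lambda_x(Z)$ does not depend on the extension $V$ of $v$ (so that it is a well-defined endomorphism of $T_xN$), that it is skew-symmetric for $g_x$, and that $Z\mapsto\lambda_x(Z)$ is a morphism of Lie algebras. Linearity of $\lambda_x$ in the variable $Z$ is then immediate from the bilinearity of the bracket.

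For well-definedness I would pass to local coordinates $(y^1,\dots,y^n)$ centered at $x$ and write $Z=Z^j\partial_j$, $V=V^j\partial_j$. Then $[Z,V]^i=Z^j\partial_jV^i-V^j\partial_jZ^i$, and since $Z_x=0$ we get $[Z,V]^i_x=-V^j(x)\,\partial_jZ^i(x)$, an expression that depends only on $V_x=v$ and is linear in it. Equivalently: if $\phi_t$ denotes the local flow of $Z$, then $\phi_t(x)=x$, and $[Z,V]_x=-\tfrac{d}{dt}\big|_{t=0}(d\phi_t)^{-1}_x v$, which makes both the independence of $V$ and the linearity transparent. In particular $\lambda_x(Z)$ is a well-defined element of $\mathrm{End}(T_xN)$.

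Skew-symmetry comes from the Killing equation: $\mathcal{L}_Zg=0$ expands, on vector fields $V,W$, to $Z(g(V,W))=g([Z,V],W)+g(V,[Z,W])$. Evaluating at $x$ and using $Z_x=0$ makes the left-hand side vanish, so $g_x(\lambda_x(Z)v,w)+g_x(v,\lambda_x(Z)w)=0$ for all $v,w\in T_xN$; hence $\lambda_x(Z)\in\so(T_xN)$. For the homomorphism property, first note that $[Z_1,Z_2]$ is again a Killing field and vanishes at $x$ (both terms in the coordinate formula for its components vanish there), so $\lambda_x([Z_1,Z_2])$ is defined. Given $v\in T_xN$ with extension $V$, the fields $[Z_1,V]$ and $[Z_2,V]$ extend $\lambda_x(Z_1)v$ and $\lambda_x(Z_2)v$ respectively, so by the Jacobi identity
\[
[[Z_1,Z_2],V]=[Z_1,[Z_2,V]]-[Z_2,[Z_1,V]]
\]
evaluated at $x$ reads $\lambda_x([Z_1,Z_2])v=\lambda_x(Z_1)\lambda_x(Z_2)v-\lambda_x(Z_2)\lambda_x(Z_1)v$, where we use the independence of the extension established above. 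Thus $\lambda_x([Z_1,Z_2])=[\lambda_x(Z_1),\lambda_x(Z_2)]$. The only step that deserves genuine care is the well-definedness in the first paragraph; everything after it is formal bookkeeping with the Killing equation and the Jacobi identity.
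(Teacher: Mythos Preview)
Your proof is correct and follows exactly the approach the paper indicates: it invokes the Jacobi identity for the homomorphism property and the fact that Killing fields are derivations of the metric for skew-symmetry, with the well-definedness check being the only additional detail you (rightly) make explicit. The paper states the lemma without proof, calling it ``an easy application'' of precisely these two facts.
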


For any given point $x$ of a pseudo-Riemannian manifold, the map
$\lambda_x$ will denote from now on the homomorphism from the previous
lemma.

Gromov \cite{Gromov} proved that the presence of a geometric structure
of finite type (in the sense of Cartan) which is invariant under the
action of a simple Lie group yields large spaces of Killing vector
fields fixing given points in the manifold being acted upon. We refer
to \cite{Zimmer-rigid} for a detailed description of these
techniques. The statement below in the case of germs of Killing fields
is essentially contained in Section~9 of \cite{GCT} (see also
Proposition~2.3 in \cite{Quiroga-Z}). From this, the result for global
Killing vector fields is straightforward since $\widetilde{M}$ is
analytic and simply connected (see
\cite{GCT,Gromov,Zimmer-rigid}). Observe that, following our notation
with $M$, we denote with $\OO$ the foliation by $\widetilde{G}$-orbits
in $\widetilde{M}$.

\begin{proposition}\label{prop-g(x)}
  Let $G$ be a connected non-compact simple Lie group acting
  isometrically and with a dense orbit on a connected finite volume
  pseudo-Riemannian manifold $M$. Consider the $\widetilde{G}$-action
  on $\widetilde{M}$ lifted from the $G$-action on $M$. Assume that
  $M$ and the $G$-action on $M$ are both analytic. Then, there exists
  a conull subset $S\subset \widetilde{M}$ such that for every $x \in
  S$ the following properties are satisfied:
  \begin{enumerate}
  \item there is a homomorphism $\rho_x : \g \rightarrow
    \Kill(\widetilde{M})$ which is an isomorphism onto its image
    $\rho_x(\g) = \g(x)$.
  \item $\g(x)\subset \Kill_0(\widetilde{M},x)$, i.e.~every element of
    $\g(x)$ vanishes at $x$.
  \item For every $X,Y \in \g$ we have:
   $$
   [\rho_x(X),Y^*] = [X,Y]^* = -[X^*,Y^*].
   $$
   In particular, the elements in $\g(x)$ and their corresponding
   local flows preserve both $\OO$ and $T\OO^\perp$.
 \item The homomorphism of Lie algebras $\lambda_x\circ\rho_x : \g
   \rightarrow \so(T_x \widetilde{M})$ induces a $\g$-module structure
   on $T_x \widetilde{M}$ for which the subspaces $T_x \OO$ and $T_x
   \OO^\perp$ are $\g$-submodules.
  \end{enumerate}
\end{proposition}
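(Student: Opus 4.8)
The plan is to invoke the Gromov--Zimmer machinery for rigid geometric structures of finite type and then upgrade from the germ-level statement to global Killing fields using analyticity. First I would recall that the pseudo-Riemannian metric on $\widetilde{M}$, together with the $\widetilde{G}$-action, furnishes an $A$-invariant rigid geometric structure of finite type in the sense of Cartan, where $A = \widetilde{G}$ acts by isometries. Gromov's centralizer theorem (as presented in \cite{Zimmer-rigid}, and with the explicit form in Section~9 of \cite{GCT}; cf.\ Proposition~2.3 of \cite{Quiroga-Z}) then yields, on a conull $\widetilde{G}$-invariant subset $S \subset \widetilde{M}$, that for each $x \in S$ the isotropy representation of the full pseudogroup of local Killing fields is large enough that the Lie algebra $\g$ embeds into the germs of Killing fields at $x$, with the embedding $\rho_x$ chosen so that its image consists of germs vanishing at $x$ and so that the bracket relation $[\rho_x(X), Y^*] = [X,Y]^*$ holds at the germ level. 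The choice of $\rho_x$ here is dictated by the structure of the orbit: on the orbit through $x$ the Killing fields $\rho_x(X)$ must reproduce, infinitesimally at $x$, the adjoint action twisted against the fundamental vector fields $Y^*$, which is exactly what forces $[\rho_x(X),Y^*]=[X,Y]^*$.

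Next I would globalize. Since $M$ and the $G$-action are analytic, $\widetilde{M}$ is analytic and simply connected, and every local Killing field extends uniquely to a global one (the standard Amores--Nomizu argument: a local Killing field is determined by its $1$-jet at a point, the extension is obstructed only by monodromy, and simple connectivity kills the monodromy). Applying this to each germ $\rho_x(X)$ produces a genuine homomorphism $\rho_x : \g \to \Kill(\widetilde{M})$; it is injective because $\g$ is simple and $\rho_x$ is nonzero (its image contains fields vanishing at $x$ but with nonzero $1$-jet there, so it is not the zero map, hence the kernel, an ideal of $\g$, is trivial). This gives (1) and (2). For (3), the relation $[\rho_x(X),Y^*]=[X,Y]^*$ holds on a neighborhood of $x$ and hence everywhere on $\widetilde M$ by analyticity; combined with $[X^*,Y^*] = -[X,Y]^*$ (the standard sign for fundamental vector fields of a left action) this is the displayed identity. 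The identity shows that the flows of $\rho_x(X)$ carry fundamental vector fields to fundamental vector fields, so they preserve $T\OO$; being isometries, they preserve $T\OO^\perp$ as well, giving (3).

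Finally, (4) is a formal consequence of Lemma~\ref{lemma-lambda} together with (2) and (3): $\lambda_x \circ \rho_x : \g \to \so(T_x\widetilde M)$ is a composite of Lie algebra homomorphisms, hence a homomorphism, and it makes $T_x\widetilde M$ a $\g$-module. That $T_x\OO$ is a submodule follows because $\rho_x(X)$ preserves $T\OO$ by (3), so $\lambda_x(\rho_x(X))$ preserves the subspace $T_x\OO$; likewise $T_x\OO^\perp$ is a submodule because the flows preserve $T\OO^\perp$. I expect the main obstacle to be not any single computation but the careful citation and bookkeeping of the Gromov--Zimmer apparatus: one must be precise about which conull set $S$ is obtained, that it can be taken $\widetilde G$-invariant, that the $\rho_x$ can be chosen with the stated bracket normalization (rather than merely abstractly isomorphic to $\g$), and that passing from Gromov's "generically defined" Killing fields to genuinely global ones on $\widetilde M$ is legitimate — all of which are handled in \cite{GCT,Gromov,Zimmer-rigid} but need to be assembled correctly. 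The rest is soft.
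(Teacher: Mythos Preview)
Your proposal is correct and follows essentially the same approach as the paper: the paper does not give a detailed proof but states that the germ-level version is contained in Section~9 of \cite{GCT} (and Proposition~2.3 of \cite{Quiroga-Z}), and that globalization to $\Kill(\widetilde{M})$ is immediate from analyticity and simple connectedness of $\widetilde{M}$, which is precisely your Amores--Nomizu extension step. Your derivations of (3) and (4) from (1)--(2) are the standard ones and match how the paper uses these facts downstream.
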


With the above setup, assume that the $G$-orbits are non-degenerate
which, from now on, is considered with respect to the ambient
pseudo-Riemannian metric. In particular, the $\widetilde{G}$-orbits on
$\widetilde{M}$ are non-degenerate as well and we have a direct sum
decomposition $T\widetilde{M} = T\OO \oplus T\OO^\perp$. Hence, we can
consider the $\g$-valued $1$-form $\omega$ on $\widetilde{M}$ which is
given, at every $x \in \widetilde{M}$, by the composition
$T_x\widetilde{M} \rightarrow T_x\OO \cong \g$ of the natural
projection onto $T_x\OO$ and the natural isomorphism of this latter
space with $\g$. Also, consider the $\g$-valued $2$-form given by
$\Omega = d\omega|_{\wedge^2 T\OO^\perp}$. The following result is
elementary and a proof can be found in \cite{Quiroga-Z}.

\begin{lemma}\label{lemma-omega-Omega}
  Let $G$, $M$ and $S$ be as in Proposition~\ref{prop-g(x)}. If we
  assume that the $G$-orbits are non-degenerate, then:
  \begin{enumerate}
  \item For every $x \in S$, the maps $\omega_x : T_x\widetilde{M}
    \rightarrow \g$ and $\Omega_x : \wedge^2 T_x\OO^\perp \rightarrow
    \g$ are both homomorphisms of $\g$-modules, for the $\g$-module
    structures from Proposition~\ref{prop-g(x)}.

  \item The normal bundle $T\OO^\perp$ is integrable if and only if
    $\Omega = 0$.
  \end{enumerate}
\end{lemma}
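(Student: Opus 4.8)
The plan is to derive both assertions from a single infinitesimal transformation rule for $\omega$ under the Killing fields supplied by Proposition~\ref{prop-g(x)}: I claim that for every $x\in S$ and every $X\in\g$ the Lie derivative satisfies
\[
L_{\rho_x(X)}\,\omega = -\ad(X)\circ\omega
\]
as $\g$-valued $1$-forms on $\widetilde M$, where $\ad(X)\circ\omega$ denotes the composite of $\omega:T\widetilde M\to\g$ with $\ad(X):\g\to\g$. To prove this, fix $x\in S$ and put $Z=\rho_x(X)$. Since both sides are $1$-forms and the flow of $Z$ preserves the splitting $T\widetilde M=T\OO\oplus T\OO^\perp$ by Proposition~\ref{prop-g(x)}(3), it suffices to compare them on $T\OO^\perp$ and on the natural frame $Y^*$, $Y\in\g$, of $T\OO$. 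Using the derivation property $(L_Z\omega)(V)=Z(\omega(V))-\omega([Z,V])$: on a local section $W$ of $T\OO^\perp$ both sides vanish, because $\omega$ kills $T\OO^\perp$ and $[Z,W]$ is again a section of $T\OO^\perp$; and on $Y^*$ one has $\omega(Y^*)=Y$ (a constant $\g$-valued function) and $[Z,Y^*]=[X,Y]^*$ by Proposition~\ref{prop-g(x)}(3), whence $(L_Z\omega)(Y^*)=-[X,Y]=(-\ad(X)\circ\omega)(Y^*)$. Applying the exterior derivative and using that the fixed endomorphism $\ad(X)$ of $\g$ commutes with $d$ then gives $L_{\rho_x(X)}(d\omega)=-\ad(X)\circ d\omega$.

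For part (1) one evaluates these identities at the point $x$. Since $\rho_x(X)$ vanishes at $x$ by Proposition~\ref{prop-g(x)}(2), for a local vector field $V$ with $V_x=v$ the derivation formula yields $(L_{\rho_x(X)}\omega)_x(v)=-\omega_x([\rho_x(X),V]_x)=-\omega_x(X\cdot v)$, where $X\cdot v=\lambda_x(\rho_x(X))(v)$ is the $\g$-action of Proposition~\ref{prop-g(x)}(4); comparison with $-\ad(X)(\omega_x(v))$ gives $\omega_x(X\cdot v)=\ad(X)(\omega_x(v))$, so $\omega_x$ is a $\g$-module map (here $\g$ carries the adjoint representation, with respect to which the natural isomorphism $T_x\OO\cong\g$ is $\g$-equivariant by Proposition~\ref{prop-g(x)}(3)). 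Running the same argument for the $2$-form $d\omega$, with $U,W$ taken to be local sections of $T\OO^\perp$ extending $u,w\in T_x\OO^\perp$ so that $[\rho_x(X),U]_x=X\cdot u$ and $[\rho_x(X),W]_x=X\cdot w$ again lie in $T_x\OO^\perp$, and with the differentiation term $\rho_x(X)(d\omega(U,W))$ vanishing at $x$, one obtains $\Omega_x\bigl((X\cdot u)\wedge w+u\wedge(X\cdot w)\bigr)=\ad(X)\bigl(\Omega_x(u\wedge w)\bigr)$; that is, $\Omega_x$ is a homomorphism of $\g$-modules.

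For part (2), take local sections $U,W$ of $T\OO^\perp$; since $\omega(U)=\omega(W)=0$, the intrinsic formula $d\omega(U,W)=U(\omega(W))-W(\omega(U))-\omega([U,W])$ reduces to $\Omega(U,W)=d\omega(U,W)=-\omega([U,W])$. Hence $\Omega\equiv 0$ if and only if $\omega([U,W])=0$ for all such $U,W$, i.e.\ if and only if $[U,W]$ is always a section of $T\OO^\perp$, which by the Frobenius theorem is precisely the integrability of $T\OO^\perp$. The only delicate step is the transformation rule $L_{\rho_x(X)}\omega=-\ad(X)\circ\omega$: it uses all three properties of Proposition~\ref{prop-g(x)}(3) simultaneously, and one must be attentive to signs and to the fact that $T\OO$ is trivialized by the genuinely $X$-dependent frame $Y^*$ rather than by $Z$-invariant vector fields; once that identity is in hand, parts (1) and (2) are purely formal consequences of the standard identities for $d$ and the Lie derivative.
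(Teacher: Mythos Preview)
Your proof is correct. The paper itself does not give a proof of this lemma, stating only that it is elementary and referring to \cite{Quiroga-Z}; your argument therefore supplies the details that the paper omits, and does so by the natural route: establish the global transformation law $L_{\rho_x(X)}\omega=-\ad(X)\circ\omega$ from Proposition~\ref{prop-g(x)}(3), evaluate at the vanishing point $x$ to get $\g$-equivariance of $\omega_x$ and (via $L_Zd=dL_Z$) of $\Omega_x$, and then read off (2) from the intrinsic formula for $d\omega$ together with $\ker\omega=T\OO^\perp$ and Frobenius. The sign bookkeeping and the use of the $Z$-invariance of the splitting are handled correctly.
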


The non-degeneracy of the orbits is ensured for low dimensional
manifolds by the next result, which appears in~\cite{Quiroga-Z} as
Lemma~2.7.

\begin{lemma}\label{lemma-nondeg}
  Let $G$ be a connected non-compact simple Lie group acting
  isometrically and with a dense orbit on a connected finite volume
  pseudo-Riemannian manifold $M$. If $\dim(M) < 2 \dim(G)$, then the
  bundles $T\OO$ and $T\OO^\perp$ have fibers that are non-degenerate
  with respect to the metric on $M$.
\end{lemma}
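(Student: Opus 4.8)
The plan is to show that, at each point $x$, the radical of the restriction of the metric to $T_x\OO$ is a \emph{proper ideal} of $\g$, hence trivial because $\g$ is simple; the hypothesis $\dim(M)<2\dim(G)$ is needed only to force that ideal to be proper. Since non-degeneracy of $T_x\OO$ is a local, isometry-invariant condition, one may pass freely between $M$ and its universal cover $\widetilde{M}$. Fix $x$ and put $R_x=T_x\OO\cap T_x\OO^\perp$. Because the ambient metric is non-degenerate, $R_x$ is simultaneously the radical of $T_x\OO$ and of $T_x\OO^\perp$, so it suffices to prove $R_x=0$. Write $\mathfrak r_x\subset\g$ for the image of $R_x$ under the natural isomorphism $T_x\OO\cong\g$. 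As the action is locally free, $\dim T_x\OO=\dim(G)$, hence $\dim T_x\OO^\perp=\dim(M)-\dim(G)$; since $R_x\subset T_x\OO^\perp$ this gives $\dim\mathfrak r_x\le\dim(M)-\dim(G)<\dim(G)=\dim\g$, so $\mathfrak r_x$ is a proper subspace of $\g$ for every $x$.

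First I would prove $R_x=0$ for the points $x$ in the conull set $S\subset\widetilde{M}$ of Proposition~\ref{prop-g(x)}. For such $x$, part~(4) of that proposition makes $T_x\widetilde{M}$ a $\g$-module via $\lambda_x\circ\rho_x\colon\g\to\so(T_x\widetilde{M})$ — in particular by operators skew-symmetric for $\langle\,\cdot\,,\,\cdot\,\rangle_x$ — in which $T_x\OO$ and $T_x\OO^\perp$ are submodules, so their intersection $R_x$ is a submodule too. The induced module structure on $T_x\OO$ is the adjoint one: by Lemma~\ref{lemma-lambda} and part~(3) of Proposition~\ref{prop-g(x)}, for $X,Y\in\g$ one has
\[
(\lambda_x\circ\rho_x)(X)(Y^*_x)=[\rho_x(X),Y^*]_x=[X,Y]^*_x,
\]
so, under $T_x\OO\cong\g$, the operator $(\lambda_x\circ\rho_x)(X)$ is exactly $\ad(X)$, i.e.\ $T_x\OO\cong(\g,\ad)$ as $\g$-modules. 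Therefore $\mathfrak r_x$ is an $\ad(\g)$-stable subspace of $\g$, that is, an ideal; being proper, it vanishes since $\g$ is simple. Thus $T_x\OO$ is non-degenerate for every $x\in S$.

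It remains to propagate this to all of $M$. Trivialize $T\OO\cong M\times\g$ by $(x,X)\mapsto X^*_x$, fix a basis of $\g$, and let $\delta$ be the determinant of the Gram matrix (with respect to the metric) of the resulting everywhere-linearly-independent frame of $T\OO$; this is an analytic function on $M$ that vanishes exactly where $T\OO$ is degenerate. Using $X^*_{gx}=dg_x\big((\Ad(g^{-1})X)^*_x\big)$, the fact that $g$ acts by isometries, and the equality $\det(\Ad(g))=1$ (valid because $\g$ is simple, so $\Ad(G)\subset\SL(\g)$), one checks that $\delta$ is $G$-invariant. A continuous $G$-invariant function is constant along orbit closures, hence constant on $M$ because some $G$-orbit is dense; and $\delta$ is nonzero by the previous paragraph. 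So $\delta$ never vanishes, and therefore $T_y\OO$ — and with it $T_y\OO^\perp$ — is non-degenerate at every $y\in M$.

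The only genuinely substantial ingredient is Proposition~\ref{prop-g(x)}: at the points of a conull set it produces a copy $\g(x)$ of $\g$ by Killing fields vanishing at $x$, and hence the adjoint $\g$-module structure on the orbit directions — and it is that structure which converts the fact that $R_x$ is a submodule into the fact that $\mathfrak r_x$ is an ideal. Everything else — the dimension count that makes the ideal proper, the simplicity of $\g$, and the determinant computation that moves the conclusion off the conull set — is routine, so I expect the (already cited) Gromov-Zimmer machinery underlying Proposition~\ref{prop-g(x)} to be the only real obstacle.
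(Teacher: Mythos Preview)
Your argument is correct. The paper does not prove this lemma itself but imports it as Lemma~2.7 of \cite{Quiroga-Z}, so there is no in-text proof to compare against directly. One caveat worth flagging: your appeal to Proposition~\ref{prop-g(x)} brings in an analyticity hypothesis on $M$ and on the action that Lemma~\ref{lemma-nondeg} as stated does not carry; this is harmless for every application in the present paper (where analyticity is always in force), but strictly speaking you have only established the analytic case.

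The route that avoids this --- and is the more common one in the literature --- replaces the Gromov--Zimmer Killing fields by a short ergodic-theoretic step: the assignment $x\mapsto B_x$, where $B_x(X,Y)=\langle X^*_x,Y^*_x\rangle_x$, is a smooth $G$-equivariant map from $M$ into the linear representation $S^2\g^*$, and for finite-volume actions of simple noncompact groups such a map must take $\Ad(G)$-fixed values almost everywhere (Borel density, or the measurable equivariant-map lemma). Hence $B_x$ is a scalar multiple of the Killing form for a.e.\ $x$; your dimension count rules out the zero multiple exactly as you observe, and your Gram-determinant propagation then finishes the job verbatim. That approach buys generality (no analyticity needed); yours stays entirely within the paper's toolkit and makes the ``radical is an ideal'' mechanism completely transparent.
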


It turns out that for complete manifolds, if the $G$-orbits are
non-degenerate and the normal bundle to such orbits is integrable, then
the universal covering space can be split. Such a claim is the content
of the following proposition which is a particular case of Theorem~1.1
of~\cite{Quiroga-Z}. This result is in the spirit of similar ones
found in \cite{Cairns,Cairns-Ghys,Gromov}.

\begin{proposition}\label{prop-int-normal}
  Let $G$ be a connected non-compact simple Lie group acting
  isometrically on a connected complete finite volume
  pseudo-Riemannian manifold $M$. If the tangent bundle to the orbits
  $T\OO$ has non-degenerate fibers and the bundle $T\mathcal{O}^\perp$
  is integrable, then there is an isometric covering map
  $\widetilde{G}\times N \rightarrow M$ where the domain has the
  product metric for a bi-invariant metric on $\widetilde{G}$ and with
  $N$ a complete pseudo-Riemannian manifold.
\end{proposition}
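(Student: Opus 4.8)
The plan is to show that, on the universal cover $\widetilde M$, the orthogonal decomposition $T\widetilde M = T\OO\oplus T\OO^\perp$ is parallel, then to apply the pseudo-Riemannian de Rham (Wu) decomposition theorem, and finally to identify the two factors as $\widetilde G$ (with a bi-invariant metric) and a complete manifold $N$. Note that $\widetilde M$ is again connected, complete, has non-degenerate $\widetilde G$-orbits, and has $T\OO^\perp$ integrable.

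First I would check that $T\OO^\perp$ is totally geodesic. For local sections $Z,W$ of $T\OO^\perp$ and $X\in\g$, the Koszul formula reads
\[
2\langle\nabla_Z W,X^*\rangle = Z\langle W,X^*\rangle + W\langle Z,X^*\rangle - X^*\langle Z,W\rangle + \langle[Z,W],X^*\rangle - \langle[Z,X^*],W\rangle - \langle[W,X^*],Z\rangle .
\]
Since $\langle W,X^*\rangle = \langle Z,X^*\rangle = 0$ by orthogonality, $\langle[Z,W],X^*\rangle = 0$ by integrability of $T\OO^\perp$, and $X^*\langle Z,W\rangle = \langle[X^*,Z],W\rangle + \langle Z,[X^*,W]\rangle$ because $X^*$ is a Killing field, the remaining terms cancel, so $\nabla_Z W\in\Gamma(T\OO^\perp)$.

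Next I would show the induced metric on each $\widetilde G$-orbit is one fixed multiple of the Killing form of $\g$; this is the crux, and the only place the non-compactness and simplicity of $G$ are used. Transporting the metric on $T\OO$ to $\g$ by the natural isomorphism defines $B_x\in\mathrm{Sym}^2(\g^*)$ with $B_{g\cdot x} = B_x\circ(\Ad(g^{-1})\times\Ad(g^{-1}))$, each $B_x$ non-degenerate. For $x$ in the conull set $S$ of Proposition~\ref{prop-g(x)}, the fields of $\g(x)$ vanish at $x$, are tangent to the orbit $\OO_x$ (their flows preserve $\OO$ and fix $x$), and by the relation $[\rho_x(X),Y^*] = [X,Y]^*$ together with Lemma~\ref{lemma-lambda} act on $T_x\OO_x\cong\g$ through $\ad$; hence every $\ad(X)$ is $B_x$-skew, $B_x$ is $\ad$-invariant, and so $B_x\in\R\cdot\Kill$ since $\g$ is simple. (Alternatively, and without using analyticity: the pushforward of the finite invariant volume under $B$ is a finite invariant measure for the linear $G$-action on $\mathrm{Sym}^2(\g^*)$, hence is carried by the fixed-point line $\R\cdot\Kill$.) By continuity $B_x = c(x)\Kill$ on all of $\widetilde M$ with $c$ continuous and nowhere zero; equivariance and $\Ad$-invariance of $\Kill$ make $c$ a $G$-invariant function, which is constant, $c\equiv c_0$, because the action has a dense orbit. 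Thus the orbit metric is bi-invariant, and by the Koszul computation above $\langle X^*,Y^*\rangle = c_0\Kill(X,Y)$ is constant along the leaves of $T\OO^\perp$; equivalently, $T\OO$ is totally geodesic.

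Finally, $T\OO$ is integrable (it is tangent to the orbit foliation), $T\OO^\perp$ is integrable by hypothesis, and both are totally geodesic; an elementary argument ($\langle\nabla_Z X,W\rangle = -\langle X,\nabla_Z W\rangle = 0$ for $X$ tangent and $Z,W$ normal, together with $\nabla_XY\in\Gamma(T\OO)$ and $\nabla_ZW\in\Gamma(T\OO^\perp)$) then shows both distributions are parallel. The pseudo-Riemannian de Rham theorem applied to the simply connected complete manifold $\widetilde M$ gives an isometric splitting $\widetilde M\cong L\times N$ along these distributions, where $L$ and $N$ are the leaves through the basepoint. Both factors are simply connected, so $N$ is a complete pseudo-Riemannian manifold, while $L$ is a simply connected $\widetilde G$-orbit, hence $L\cong\widetilde G$ with the bi-invariant metric $c_0\Kill$. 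Since $\widetilde G$ preserves the splitting and acts transitively and isometrically on $L$, it acts by left translations there, and the composition $\widetilde G\times N\cong\widetilde M\to M$ is the desired $\widetilde G$-equivariant isometric covering onto $M$ (with the metric induced from $\widetilde M$); this is a genuine, not merely warped, product precisely because of the constancy of $c$ established above — which is the step I expect to be the real obstacle, and where the hypotheses on $G$ (and the finite invariant volume) are indispensable.
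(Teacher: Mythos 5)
Your proposal is essentially a complete, self-contained proof, whereas the paper itself gives none: Proposition~\ref{prop-int-normal} is quoted as a particular case of Theorem~1.1 of \cite{Quiroga-Z}. Your route --- Koszul to show $T\OO^\perp$ is totally geodesic, identification of the orbit metric with a single $\Ad$-invariant form on $\g$ to show $T\OO$ is totally geodesic, parallelism of both distributions, the Wu/de~Rham splitting of $\widetilde{M}$, and identification of the first factor with $\widetilde{G}$ carrying a bi-invariant metric --- is the natural direct argument and is in the spirit of the cited source; what it buys the reader here is independence from an external reference, at the cost of invoking either Proposition~\ref{prop-g(x)} or a Borel-density argument to get $\Ad$-invariance of the orbit metric.

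Two caveats, neither fatal. First, your claim that an $\ad$-invariant symmetric bilinear form on a simple real Lie algebra lies in $\R\cdot K$ ($K$ the Killing form) is false when $\g$ carries a complex structure, e.g.\ $\so(3,1)\simeq\gsl(2,\C)_\R$, where $K$ and $\widehat{K}(X,Y)=K(X,JY)$ are independent invariant forms --- precisely the subtlety the paper's appendix highlights. This does not harm the argument: the space of invariant forms is finite dimensional, $x\mapsto B_x$ is continuous, $G$-equivariant and takes values in that fixed subspace, so a dense orbit still forces $B_x$ to be one constant non-degenerate $\Ad$-invariant form, and the conclusion only requires \emph{some} bi-invariant metric on $\widetilde{G}$, not a multiple of $K$; but the statement should be corrected. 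Second, be explicit about imported hypotheses: Proposition~\ref{prop-g(x)} needs analyticity and a dense orbit, neither of which appears in the statement of Proposition~\ref{prop-int-normal}. Your measure-theoretic alternative removes the analyticity, but the dense orbit is still genuinely used to upgrade constancy of $B$ along orbits to constancy in the directions normal to the orbits, which is exactly what makes $T\OO$ totally geodesic and rules out the warped-product behaviour you mention. That hypothesis is part of the standing setup of Section~\ref{section-Killing-fields} (it is what makes the action locally free, so that $\OO$ and $T\OO$ exist) and of the quoted theorem in \cite{Quiroga-Z}, so its use is legitimate, but it should be stated rather than left implicit.
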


As a consequence, we obtain a lower bound on the dimension of $M$.

\begin{proposition}\label{prop-bound}
  Let $M$ be a connected analytic pseudo-Riemannian manifold and $G$ a
  connected non-compact simple Lie group. Suppose that $M$ is complete
  weakly irreducible, has finite volume and admits an analytic
  isometric non-transitive $G$-action with a dense orbit. Then:
  \[
  \dim(M) \geq \dim(G) + m(\g),
  \]
  where $m(\g)$ is the dimension of the smallest non-trivial
  representation of $\g$ that admits an invariant non-degenerate
  symmetric bilinear form.
\end{proposition}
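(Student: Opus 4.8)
The plan is to split on the size of $\dim(M)$ and then, in the relevant range, use non-integrability of the normal bundle to produce a non-trivial $\g$-module in the normal direction carrying an invariant non-degenerate symmetric form. First I would observe that the adjoint representation of $\g$ is non-trivial (since $\g$ is simple, hence non-abelian with trivial center) and carries the Killing form as an invariant non-degenerate symmetric bilinear form (Cartan's criterion); therefore $m(\g) \leq \dim(\g) = \dim(G)$. Consequently, if $\dim(M) \geq 2\dim(G)$ then $\dim(M) \geq 2\dim(G) \geq \dim(G) + m(\g)$ and there is nothing to prove. So I may assume $\dim(M) < 2\dim(G)$, and Lemma~\ref{lemma-nondeg} then guarantees that both $T\OO$ and $T\OO^\perp$ have non-degenerate fibers; in particular the $G$-orbits are non-degenerate, $T\widetilde M = T\OO \oplus T\OO^\perp$, and the forms $\omega$ and $\Omega$ of Lemma~\ref{lemma-omega-Omega} are defined on the conull set $S$ of Proposition~\ref{prop-g(x)}.

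Next I would show that $T\OO^\perp$ is not integrable. If it were, Proposition~\ref{prop-int-normal} would produce an isometric covering map $\widetilde{G}\times N \rightarrow M$ with the product of a bi-invariant metric on $\widetilde{G}$ and a complete metric on $N$. Passing to universal covers, and using that $\widetilde{G}$ is already simply connected, $\widetilde M$ would be isometric to $\widetilde G \times \widetilde N$ with the product metric, so at each point $T_x\widetilde M$ would split as an orthogonal direct sum of the tangent space to the $\widetilde G$-factor and the tangent space to the $\widetilde N$-factor, each a non-degenerate subspace invariant under the restricted holonomy group. Since $G$ is non-compact simple, $\dim(\widetilde G) = \dim(G) \geq 3 > 0$, so weak irreducibility of $M$ would force $\dim(N) = 0$; but then the $G$-orbits, being $\dim(G)$-dimensional (the action is locally free), would be open, hence closed, and connectedness of $M$ would yield a single orbit, contradicting non-transitivity. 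Hence $T\OO^\perp$ is not integrable, so by Lemma~\ref{lemma-omega-Omega}(2) we have $\Omega \not\equiv 0$; since $M$ and the action are analytic, $\Omega$ is analytic, so $\{\Omega \neq 0\}$ is a non-empty open set and meets $S$. Fix $x \in S$ with $\Omega_x \neq 0$.

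Finally I would extract the bound at such an $x$. By Proposition~\ref{prop-g(x)}(4), $T_x\OO^\perp$ is a $\g$-module via $\lambda_x\circ\rho_x$, and since this homomorphism takes values in $\so(T_x\widetilde M)$ and preserves the non-degenerate subspace $T_x\OO^\perp$, the restriction of the metric exhibits $T_x\OO^\perp$ as a $\g$-module with an invariant non-degenerate symmetric bilinear form. Moreover $T_x\OO^\perp$ is a non-trivial $\g$-module: otherwise $\wedge^2 T_x\OO^\perp$ would be a trivial $\g$-module and the $\g$-module homomorphism $\Omega_x : \wedge^2 T_x\OO^\perp \rightarrow \g$ of Lemma~\ref{lemma-omega-Omega}(1) would have image a trivial submodule of the simple Lie algebra $\g$, forcing $\Omega_x = 0$, a contradiction. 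Hence by the definition of $m(\g)$ we get $\dim(T_x\OO^\perp) \geq m(\g)$, and therefore
\[
\dim(M) = \dim(T_x\OO) + \dim(T_x\OO^\perp) = \dim(G) + \dim(T_x\OO^\perp) \geq \dim(G) + m(\g).
\]
The step I expect to be the main obstacle is the non-integrability argument: one must correctly combine Proposition~\ref{prop-int-normal} with weak irreducibility — noting that a product splitting of $\widetilde M$ with the product pseudo-Riemannian metric is holonomy-reducible, and ruling out the degenerate possibility $\dim(N) = 0$ via non-transitivity — whereas the reduction to the range $\dim(M) < 2\dim(G)$ through Lemma~\ref{lemma-nondeg} is precisely what makes $\omega$, $\Omega$, and hence the whole argument, available.
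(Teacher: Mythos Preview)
Your proof is correct and follows essentially the same approach as the paper's. The paper argues by contradiction --- assuming $\dim(M) < \dim(G) + m(\g)$, it uses $m(\g)\le\dim(G)$ and Lemma~\ref{lemma-nondeg} to see that $T_x\OO^\perp$ is a $\g$-module of dimension $< m(\g)$ with an invariant non-degenerate form, hence trivial, hence $\Omega=0$ and $T\OO^\perp$ is integrable, so Proposition~\ref{prop-int-normal} contradicts weak irreducibility --- whereas you run the same chain of implications in the direct direction, and you make the role of non-transitivity (ruling out $\dim N = 0$) more explicit than the paper does.
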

\begin{proof}
  Suppose that $\dim(M) < \dim(G) + m(\g)$. Since $m(\g) \leq \dim(G)$
  (the Killing form defines an inner product), by
  Lemma~\ref{lemma-nondeg} the bundle $T\OO^\perp$ has non-degenerate
  fibers with dimension $< m(\g)$. Hence, the definition of $m(\g)$
  implies that $T_x\OO^\perp$ is a trivial $\g$-module for the
  structure defined by Proposition~\ref{prop-g(x)}(4). Hence,
  Lemma~\ref{lemma-omega-Omega} yields the integrability of
  $T\OO^\perp$, and Proposition~\ref{prop-int-normal} contradicts the
  irreducibility of $M$.
\end{proof}

For a $G$-action as in Proposition~\ref{prop-g(x)}, consider
$\widetilde{M}$ endowed with the $\widetilde{G}$-action obtained by
lifting the $G$-action on $M$. With such setup, let us denote by $\HH$
the Lie subalgebra of $\Kill(\widetilde{M})$ consisting of the fields
that centralize the $\widetilde{G}$-action on $\widetilde{M}$. The
next result provides an embedding of $\g$ into $\HH$ that allows us to
apply representation theory to study the structure of $\HH$. We
observe that this statement is at the core of Gromov-Zimmer's
machinery on the study of actions preserving geometric structures (see
\cite{Gromov,Zimmer-rigid}).

\begin{lemma}\label{lemma-HH-module-struc}
  Let $S$ be as in Proposition~\ref{prop-g(x)}. Then, for every $x\in
  S$ and for $\rho_x$ given as in Proposition~\ref{prop-g(x)}, the
  map $\widehat{\rho}_x : \g \rightarrow \Kill(\widetilde{M})$ given
  by:
  \[
  \widehat{\rho}(X) = \rho_x(X) + X^*,
  \]
  is an injective homomorphism of Lie algebras whose image $\GG(x)$
  lies in $\HH$. In particular, $\widehat{\rho}_x$ induces on $\HH$ a
  $\g$-module structure such that $\GG(x)$ is a submodule isomorphic
  to $\g$.
\end{lemma}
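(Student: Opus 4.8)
The plan is to verify three things in turn: that $\widehat{\rho}_x$ lands in $\HH$, that it is a homomorphism of Lie algebras, and that it is injective; the $\g$-module structure on $\HH$ then follows formally. For the first point, I would show that for every $X \in \g$ the field $\widehat{\rho}_x(X) = \rho_x(X) + X^*$ commutes with $Y^*$ for all $Y \in \g$. Indeed, $[X^*, Y^*] = -[X,Y]^*$ since $X \mapsto X^*$ is an anti-homomorphism for a left action, while Proposition~\ref{prop-g(x)}(3) gives $[\rho_x(X), Y^*] = [X,Y]^*$; adding these, $[\widehat{\rho}_x(X), Y^*] = 0$, so $\widehat{\rho}_x(X)$ centralizes the $\widetilde{G}$-action and hence lies in $\HH$.

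For the homomorphism property, I would compute $[\widehat{\rho}_x(X), \widehat{\rho}_x(Y)]$ by expanding the bracket into four terms: $[\rho_x(X), \rho_x(Y)] + [\rho_x(X), Y^*] + [X^*, \rho_x(Y)] + [X^*, Y^*]$. The first term is $\rho_x([X,Y])$ because $\rho_x$ is a homomorphism (Proposition~\ref{prop-g(x)}(1)). The last term is $-[X,Y]^*$ as above. The two cross terms are $[\rho_x(X), Y^*] = [X,Y]^*$ and $[X^*, \rho_x(Y)] = -[\rho_x(Y), X^*] = -[Y,X]^* = [X,Y]^*$, again by Proposition~\ref{prop-g(x)}(3). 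Summing: $\rho_x([X,Y]) + [X,Y]^* + [X,Y]^* - [X,Y]^* = \rho_x([X,Y]) + [X,Y]^* = \widehat{\rho}_x([X,Y])$, as desired. Injectivity is immediate: if $\widehat{\rho}_x(X) = 0$ then evaluating at $x$ and using that $\rho_x(X)$ vanishes at $x$ (Proposition~\ref{prop-g(x)}(2)) gives $X^*_x = 0$; since the action is locally free the orbit map is an immersion, so $X = 0$. (Alternatively, $\rho_x(X) = -X^*$ would force $\rho_x(X)$ to be tangent to $\OO$ while also vanishing at $x$, and one checks this forces $X=0$.) Write $\GG(x)$ for the image; it is a subalgebra of $\HH$ isomorphic to $\g$.

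Finally, for the $\g$-module structure on $\HH$: since $\GG(x) \subset \HH$ is a subalgebra and $\HH$ is a Lie algebra, the adjoint action of $\GG(x)$ on $\HH$, transported via the isomorphism $\widehat{\rho}_x : \g \to \GG(x)$, makes $\HH$ a $\g$-module, with $\GG(x)$ itself a submodule (it is an ideal in the subalgebra it generates together with the bracket, but in any case $[\GG(x),\GG(x)] \subset \GG(x)$ suffices for it to be a submodule) isomorphic to the adjoint module $\g$. I do not expect any serious obstacle here: the entire argument is bracket bookkeeping, and the only subtlety is getting the signs right in the interplay between the anti-homomorphism $X \mapsto X^*$ and the homomorphism $\rho_x$, which is precisely what Proposition~\ref{prop-g(x)}(3) is designed to handle. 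The one point requiring a little care is the injectivity claim, where one must invoke local freeness of the action (valid since $M$ has finite volume and the action has a dense orbit) to conclude that $X^*_x = 0$ implies $X = 0$.
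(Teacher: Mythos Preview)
Your proof is correct and follows essentially the same approach as the paper's: both verify that $\widehat{\rho}_x(X)$ centralizes the action via the identity $[\rho_x(X),Y^*]=[X,Y]^*=-[X^*,Y^*]$ from Proposition~\ref{prop-g(x)}(3), expand the bracket $[\widehat{\rho}_x(X),\widehat{\rho}_x(Y)]$ into four terms and simplify using that same identity, and deduce injectivity from local freeness by evaluating at $x$. Your write-up is in fact slightly more explicit than the paper's on the first point (the paper just says it is ``easily seen''), but the argument is the same.
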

\begin{proof}
  First, observe that the identity in Proposition~\ref{prop-g(x)}(3)
  is easily seen to imply that the image of $\widehat{\rho}_x$ lies in
  $\HH$.

  To prove that $\widehat{\rho}_x$ is a homomorphism of Lie algebras
  we apply Proposition~\ref{prop-g(x)}(3) as follows for $X,Y \in \g$:
  \begin{align*}
    [\widehat{\rho}_x(X),\widehat{\rho}_x(Y)]
    &= [\rho_x(X) + X^*,\rho_x(Y) + Y^*] \\
    &= [\rho_x(X),\rho_x(Y)] +
    [\rho_x(X),Y^*] + [X^*,\rho_x(Y)] + [X^*,Y^*] \\
    &= \rho_x([X,Y]) + [X,Y]^* + [X,Y]^* + [X^*,Y^*] \\
    &= \rho_x([X,Y]) + [X,Y]^* \\
    &= \widehat{\rho}_x([X,Y]).
  \end{align*}
  For the injectivity of $\widehat{\rho}_x$ we observe that
  $\widehat{\rho}_x(X) = 0$ implies $X^*_x = (\rho_x(X) + X^*)_x = 0$,
  which in turns yields $X = 0$ because the $G$-action is locally
  free. The last claim is now clear.
\end{proof}

We can now relate the $\g$-module structure of $\HH$ to that of
$T_x\widetilde{M}$.

\begin{lemma}\label{lemma-ev-homomorphism}
  Let $S$ be as in Proposition~\ref{prop-g(x)}. Consider
  $T_x\widetilde{M}$ and $\HH$ endowed with the $\g$-module structures
  given by Proposition~\ref{prop-g(x)}(4) and
  Lemma~\ref{lemma-HH-module-struc}, respectively. Then, for every
  $x\in S$, the evaluation map:
  \[
  ev_x : \HH \rightarrow T_x \widetilde{M}, \quad Z \mapsto Z_x,
  \]
is a homomorphism of $\g$-modules that satisfies
$ev_x(\GG(x))=T_x\OO$.  Furthermore, for almost every $x \in S$ we
have $ev_x(\HH) = T_x\widetilde{M}$.
\end{lemma}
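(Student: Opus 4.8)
The plan is to treat the three assertions in order; the first two are immediate from Proposition~\ref{prop-g(x)} and Lemma~\ref{lemma-HH-module-struc}, and the real content is the surjectivity of $ev_x$ for almost every $x$. For the claim that $ev_x$ intertwines the $\g$-actions, I would unwind both module structures. By Lemma~\ref{lemma-HH-module-struc}, $X \in \g$ acts on $Z \in \HH$ by $[\widehat{\rho}_x(X),Z] = [\rho_x(X),Z] + [X^*,Z]$, and the last bracket vanishes since $Z$ centralizes the $\widetilde{G}$-action; so $X$ acts on $\HH$ as $Z \mapsto [\rho_x(X),Z]$. On $T_x\widetilde{M}$, by Proposition~\ref{prop-g(x)}(4), $X$ acts as $\lambda_x(\rho_x(X))$, which makes sense since $\rho_x(X)$ vanishes at $x$ by Proposition~\ref{prop-g(x)}(2). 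Evaluating at $x$ and using the definition of $\lambda_x$ from Lemma~\ref{lemma-lambda} (so that $[\rho_x(X),Z]_x$ depends only on $Z_x$) gives $ev_x([\rho_x(X),Z]) = [\rho_x(X),Z]_x = \lambda_x(\rho_x(X))(Z_x)$, which is exactly the action of $X$ on $ev_x(Z)$; hence $ev_x$ is a $\g$-module homomorphism.

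For $ev_x(\GG(x)) = T_x\OO$, I would note that $ev_x(\widehat{\rho}_x(X)) = \rho_x(X)_x + X^*_x = X^*_x$ since $\rho_x(X)$ vanishes at $x$, and as $X$ runs through $\g$ the vectors $X^*_x$ fill out exactly $T_x\OO$ under the natural isomorphism $T_x\OO \cong \g$ of the orbit foliation. In particular, $\GG(x) \subset \HH$ gives $ev_x(\HH) \supseteq T_x\OO$ for every $x \in S$.

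The last assertion is the crux, and the one place an external input is needed. First, $\HH$ is finite dimensional, being a subalgebra of the finite-dimensional Killing algebra of $\widetilde{M}$, and since $M$ --- hence $\widetilde{M}$ --- is complete, the fields in $\HH$ are complete; so $ev_x(\HH)$ is the tangent space at $x$ to the $\HH$-orbit through $x$. Here I would invoke the part of Gromov--Zimmer's machinery recalled in the introduction: $\HH$ has an open dense orbit $W$ in $\widetilde{M}$, so $ev_x$ is onto for $x \in W$. To upgrade ``open dense'' to ``conull'', fix a basis $Z_1,\dots,Z_k$ of $\HH$; the locus where $ev_x$ is not surjective is where $(Z_1)_x,\dots,(Z_k)_x$ fail to span $T_x\widetilde{M}$, which in any local analytic frame is the common zero set of the $(\dim\widetilde{M})$-fold minors of the matrix of these fields --- a proper real-analytic subset of the connected analytic manifold $\widetilde{M}$ since $W \neq \varnothing$, hence of measure zero. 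Therefore $ev_x$ is surjective for almost every $x \in \widetilde{M}$, and in particular for almost every $x \in S$. The only genuine obstacle is thus the dense-orbit fact for the centralizer $\HH$, which is exactly where the Gromov--Zimmer techniques referenced in the introduction enter; the remainder is the bookkeeping above together with the standard fact that a proper analytic subset of a connected analytic manifold is null.
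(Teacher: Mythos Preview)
Your argument is correct and mirrors the paper's proof: the same computation $ev_x(X\cdot Z) = [\widehat{\rho}_x(X),Z]_x = [\rho_x(X),Z]_x = X\cdot ev_x(Z)$ using $[X^*,Z]=0$ and Lemma~\ref{lemma-lambda}, and the same appeal to Gromov--Zimmer for the surjectivity. The only difference is cosmetic: the paper cites Lemma~4.1 of \cite{Zimmer-entropy} directly for an open \emph{conull} dense orbit of $\HH$, whereas you take the open-dense-orbit statement from the introduction and supply the analyticity argument (proper analytic subsets are null) to upgrade it---which is exactly how such a citation would be unpacked.
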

\begin{proof}
  For every $x \in S$, if we let $Z\in \HH$ and $X \in \g$ be given,
  then:
  \begin{align*}
    ev_x(X\cdot Z) &= [\widehat{\rho}_x(X),Z]_x
    = [\rho_x(X) + X^*,Z]_x \\
    &= [\rho_x(X),Z]_x = X\cdot Z_x = X\cdot ev_x(Z)
  \end{align*}
  where we have used Lemma~\ref{lemma-lambda} and the definition of
  the $\g$-module structures involved, thus proving the first part.
  The last claim follows by an easy adaptation of the proof of
  Lemma~4.1 of \cite{Zimmer-entropy}, which establishes the
  transitivity of $\HH$ on an open conull dense subset of
  $\widetilde{M}$.
\end{proof}

To study in the following sections those $G$-actions for which
$T\OO^\perp$ is non-integrable we will need to use some known results
that relate isometries with Killing fields for complete manifolds.
First, we have the following result, which follows from the rigidity
(in the sense of \cite{Gromov}) of pseudo-Riemannian metrics and their
basic properties.

\begin{lemma}\label{lemma-analytic-symmetries}
  Let $N$ be an analytic pseudo-Riemannian manifold. Then, every
  Killing vector field of $N$, either local or global, is analytic.
  In particular, the isometry group $\Iso(N)$ acts analytically on N.
\end{lemma}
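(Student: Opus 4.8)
The plan is to prolong the Killing equations; the assertion is local and classical. Since analyticity is a local property, it suffices to show that every Killing field of $N$ --- a priori only smooth --- is real-analytic on a neighbourhood of each point $x_0$. First I would choose analytic local coordinates centred at $x_0$, so that the Christoffel symbols and the curvature tensor $R$ are analytic functions of the coordinates. For a Killing field $X$, set $F = \nabla X$; the Killing equation $\nabla_a X_b + \nabla_b X_a = 0$ says precisely that $F$ is skew-symmetric, and the classical prolongation identity (Kostant's formula), obtained by combining the Killing equation with the Ricci identity, expresses the second covariant derivative algebraically through $X$ and $R$, i.e.\ $\nabla_a \nabla_b X_c = R_{abc}{}^{d} X_d$ up to signs. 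Rewriting $\partial_k X$ through $(X,F)$ via $F = \nabla X$, and $\partial_k F$ through $(X,F)$ via the covariant derivative of $F$ together with Kostant's identity, one obtains a closed first-order linear system for the pair $U = (X,F)$,
\[
\partial_k U = A_k\, U \qquad (k = 1,\dots,\dim N),
\]
whose matrix coefficients $A_k$ are analytic, being built from the Christoffel symbols and $R$.

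Next I would invoke analyticity of solutions of such overdetermined linear systems with analytic coefficients. For $v$ in a small ball around $0$, the curve $t \mapsto U(x_0 + tv)$ solves the linear ODE $\dot Y = \big(\sum_k v^k A_k(x_0 + tv)\big) Y$ with initial value $U(x_0)$; the coefficient matrix is jointly analytic in $(t,v)$, so by analytic dependence on parameters the solution is jointly analytic in $(t,v)$, and evaluating at $t = 1$ displays $x \mapsto U(x)$, hence $X$, as an analytic function of $x - x_0$ near $x_0$. Equivalently, one may simply cite that an analytic pseudo-Riemannian metric is a rigid analytic geometric structure of finite type in the sense of \cite{Gromov}, for which local Killing fields are automatically analytic.

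For the final assertion I would run the same prolongation for the isometry equation $\phi^* g = g$: an isometry $\phi$ is determined by its $1$-jet $(\phi(x_0), d\phi_{x_0})$ and, on a normal neighbourhood of $x_0$, is given by $\phi = \exp_{\phi(x_0)} \circ\, d\phi_{x_0} \circ (\exp_{x_0})^{-1}$, which is analytic because the geodesic flow of an analytic metric is analytic. Since $\Iso(N)$ is a Lie group acting smoothly on $N$ (classical: pseudo-Riemannian structures are $\mathrm{O}(p,q)$-structures of finite type) and is generated near the identity by the flows of the now analytic complete Killing fields, one concludes that the action map $\Iso(N) \times N \to N$ is analytic.

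The main obstacle I anticipate is the initial regularity bootstrap rather than anything deep: a Killing field is a priori only $C^1$ (or only as regular as the ambient setup provides), but once $U \in C^1$ the closed system $\partial_k U = A_k U$ forces $U \in C^\infty$ by induction, after which the ODE argument upgrades $C^\infty$ to analytic; pinning down this $C^1 \to C^\infty$ step, and stating the final ``the action is analytic'' claim with care, is where the bookkeeping lies, everything else being a routine computation.
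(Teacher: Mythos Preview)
Your proposal is correct. The paper itself gives no proof of this lemma: it simply remarks that the statement ``follows from the rigidity (in the sense of \cite{Gromov}) of pseudo-Riemannian metrics and their basic properties.'' You have both supplied the classical prolongation argument that underlies this rigidity (Kostant's identity closing the first-order system for $(X,\nabla X)$, followed by analytic dependence of linear ODE solutions on analytic data) and explicitly noted the Gromov reference as an equivalent shortcut, so your write-up strictly contains the paper's justification while making the mechanism explicit. The treatment of isometries via $\exp_{\phi(x_0)}\circ d\phi_{x_0}\circ\exp_{x_0}^{-1}$ and analyticity of the geodesic flow is likewise standard and adequate; the paper offers no further detail on this point either.
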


By Proposition~30 of Chapter~9 from \cite{ONeill-book}, on a complete
pseudo-Riemannian manifold every global Killing vector field is
complete. Hence, Proposition~33 of Chapter~9 from \cite{ONeill-book}
has the following immediate consequence.

\begin{lemma}\label{lemma-Kill-Iso}
  Let $N$ be a complete pseudo-Riemannian manifold and suppose that
  the action of its isometry group $\Iso(N)$ is considered on the
  left. If $\iso(N)$ denotes the Lie algebra of $\Iso(N)$, then the
  map:
  \[
  \iso(N) \rightarrow \Kill(N), \quad X \mapsto X^*,
  \]
  is an anti-isomorphism of Lie algebras. In particular, $[X,Y]^* =
  -[X^*,Y^*]$ for every $X,Y \in \iso(N)$.
\end{lemma}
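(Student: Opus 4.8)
The goal is Lemma~\ref{lemma-Kill-Iso}, so I would simply invoke the two cited results from \cite{ONeill-book} and fit them together. First I would recall that, by Proposition~30 of Chapter~9 of \cite{ONeill-book}, on a complete pseudo-Riemannian manifold $N$ every global Killing vector field is complete, i.e.\ it generates a global one-parameter group of isometries $(\phi_t)_{t\in\R}$. This gives a well-defined map $\Kill(N) \to \iso(N)$ assigning to a Killing field $Z$ the infinitesimal generator of its flow. Conversely, for $X \in \iso(N)$ the fundamental vector field $X^*$ (the velocity field of $t \mapsto \exp(tX)\cdot p$) is a global Killing field, so we have a map $\iso(N) \to \Kill(N)$, $X \mapsto X^*$, and the two maps are mutually inverse bijections by the flow/generator correspondence.

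The only substantive point is the bracket behavior, and this is exactly Proposition~33 of Chapter~9 of \cite{ONeill-book}: because the isometry group acts \emph{on the left} on $N$, the fundamental-vector-field map $X \mapsto X^*$ reverses brackets, $[X,Y]^* = -[X^*,Y^*]$ for all $X,Y \in \iso(N)$. (The sign is the usual one for a left action: left-invariant vector fields on $\Iso(N)$ push forward to the $X^*$, and the map from the Lie algebra to left-invariant fields, composed with evaluation of the orbit map, picks up the sign when compared to the bracket of vector fields on $N$.) Combining this with the bijectivity from the previous paragraph shows that $X \mapsto X^*$ is a linear isomorphism reversing the Lie bracket, hence an anti-isomorphism of Lie algebras, which is the assertion.

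There is essentially no obstacle here beyond correctly tracking the left-versus-right convention that fixes the sign; the analyticity hypotheses elsewhere in the paper are not even needed, only completeness. I would phrase the proof in two sentences, citing Proposition~30 for completeness of Killing fields (hence the bijection $\iso(N) \leftrightarrow \Kill(N)$) and Proposition~33 for the sign, and note that the ``in particular'' clause is just the unwinding of the phrase ``anti-isomorphism.''
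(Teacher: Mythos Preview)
Your proposal is correct and matches the paper's approach exactly: the paper does not give a separate proof but simply states that the lemma is an immediate consequence of Proposition~30 (completeness of global Killing fields) and Proposition~33 of Chapter~9 of \cite{ONeill-book}, which is precisely what you do. Your additional remarks on the sign convention for left actions are accurate and just make explicit what the paper leaves implicit.
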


We now use the above to prove that on a complete manifold every Lie
algebra of Killing fields can be realized from an isometric right
action.

\begin{lemma}\label{lemma-Kill-to-action}
  Let $N$ be a complete pseudo-Riemannian manifold and $H$ a simply
  connected Lie group with Lie algebra $\h$. If $\psi : \h \rightarrow
  \Kill(N)$ is a homomorphism of Lie algebras, then there exists an
  isometric right $H$-action $N\times H \rightarrow N$ such that
  $\psi(X) = X^*$, for every $X \in \h$. Furthermore, if $N$ is
  analytic, then the $H$-action is analytic as well.
\end{lemma}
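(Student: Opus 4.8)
The plan is to build the right $H$-action from the Killing fields by integrating the homomorphism $\psi$ into a homomorphism of Lie groups. First I would observe that by Lemma~\ref{lemma-Kill-Iso}, since $N$ is complete, every globally defined Killing field is complete, so the map $\iso(N) \to \Kill(N)$, $X \mapsto X^*$, is an anti-isomorphism of Lie algebras when $\Iso(N)$ acts on the left. Composing $\psi : \h \to \Kill(N)$ with the inverse of this anti-isomorphism gives an anti-homomorphism $\h \to \iso(N)$, equivalently a Lie algebra homomorphism $\h \to \iso(N)^{\mathrm{op}}$. The key point is that $\iso(N)^{\mathrm{op}}$ is the Lie algebra of $\Iso(N)$ with the opposite group structure, which is canonically identified with the Lie algebra of $\Iso(N)$ acting on the \emph{right}.

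Next I would use the simple connectedness of $H$: any Lie algebra homomorphism from $\h$ into the Lie algebra of a Lie group integrates to a Lie group homomorphism $H \to \Iso(N)$, provided we take $\Iso(N)$ (or its identity component) as the target and account for the left/right conventions. Concretely, I would produce a homomorphism $\Phi : H \to \Iso(N)$ such that the induced map on Lie algebras, followed by $X \mapsto X^*$, recovers $\psi$ up to the sign bookkeeping; then the right action $N \times H \to N$ defined by $x \cdot h = \Phi(h)^{-1}(x)$ (or the appropriate variant making the conventions consistent with Lemma~\ref{lemma-Kill-Iso}) is isometric, and its induced vector fields are exactly $\psi(X)$ for $X \in \h$. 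The completeness of $N$ is used twice: once to get that the Killing fields are complete (so they lie in $\iso(N)$ rather than merely generating local flows), and implicitly in invoking the structure of $\Iso(N)$ as a Lie transformation group.

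For the analyticity addendum, I would invoke Lemma~\ref{lemma-analytic-symmetries}: on an analytic pseudo-Riemannian manifold $\Iso(N)$ acts analytically, so the homomorphism $\Phi : H \to \Iso(N)$ composed with the analytic action $\Iso(N) \times N \to N$ yields an analytic $H$-action.

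The main obstacle I expect is the careful handling of the left/right and homomorphism/anti-homomorphism conventions so that the signs match the statement $\psi(X) = X^*$ exactly, together with confirming that one may integrate into $\Iso(N)$ rather than only obtaining a local action; the latter is precisely where completeness of $N$ (hence completeness of all the relevant Killing fields, via O'Neill's Proposition~30 cited above) and simple connectedness of $H$ enter, and one must check that the integration does not require passing to a cover of $H$.
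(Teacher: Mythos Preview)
Your proposal is correct and is essentially the paper's own argument: the paper defines $\alpha(Y)=-Y^*$ to turn the anti-isomorphism of Lemma~\ref{lemma-Kill-Iso} into a genuine isomorphism (your ``opposite Lie algebra'' bookkeeping in disguise), integrates $\alpha^{-1}\circ\psi$ to a homomorphism $\Psi:H\to\Iso(N)$ via simple connectedness, sets $nh=\Psi(h^{-1})(n)$, and then carries out explicitly the sign verification you flagged as the main obstacle, before invoking Lemma~\ref{lemma-analytic-symmetries} for analyticity.
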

\begin{proof}
  Consider the map $\alpha : \iso(N) \rightarrow \Kill(N)$ given by
  $\alpha(Y) = -Y^*$, which is an isomorphism of Lie algebras by
  Lemma~\ref{lemma-Kill-Iso}. Let $\Psi : H \rightarrow \Iso(N)$ be
  the homomorphism of Lie groups induced by the homomorphism
  $\alpha^{-1} \circ \psi : \h \rightarrow \iso(n)$. This yields a
  smooth isometric right $H$-action given by:
  \[
  N \times H \rightarrow N, \quad (n,h) \mapsto nh = \Psi(h^{-1})(n).
  \]
  For $X\in \h$ there is $Y\in \iso(N)$ such that $\psi(X) =
  -\alpha(Y) = Y^*$. Hence, for the right $H$-action one computes
  $X^*$ at every $p\in N$ as follows:
  \begin{alignat*}{2}
    X^*_p &= \frac{d}{dt}\Big|_{t=0} p \exp(tX)
    = \frac{d}{dt}\Big|_{t=0}\Psi(\exp(-tX))(p) \\
    &= \frac{d}{dt}\Big|_{t=0}\exp(-t(\alpha^{-1}\circ\psi)(X))p
    = \frac{d}{dt}\Big|_{t=0}\exp(tY)p \\
    &= Y^*_p = \psi(X)_p,
  \end{alignat*}
  which proves the first part of the lemma. Note that the first and
  second to last identities use the definition of $Z^*$ for the right
  $H$-action and the left $\Iso(N)$-action, respectively.  Finally,
  the last part of our statement follows from the last claim of
  Lemma~\ref{lemma-analytic-symmetries}.
\end{proof}

\section{Structure of the centralizer of isometric
  $\uSO_0(p,q)$-actions for low dimensional $M$ and non-integrable
  $T\OO^\perp$}
\label{section-structure-centralizer}
\noindent
For $n\in\Z_+$ let $p,q\in\Z_+$ be such that $p+q=n$ and
\[
I_{p,q} = \left(
  \begin{matrix}
    I_p & 0 \\
    0 & -I_q
  \end{matrix}
\right).
\]
Then, $\so(p,q)$ is the Lie algebra of linear transformations of
$\R^{n}$ that are anti-symmetric with respect to the inner product
$\left<\cdot,\cdot\right>_{p,q}$ on $\R^n$ defined by $I_{p,q}$,
$\SO_o (p,q)$ denotes the connected group of isometries with respect
to $\left<\cdot ,\cdot \right>_{p,q}$ and $\uSO_0(p,q)$ its universal
covering group.
We let $\R^{p,q}$ denote the $\so(p,q)$-module defined by the natural
representation of $\so(p,q)$ in $\R^{n}$.  Denote by $C^+$ and $C^-$
the $\so(4,4)$-modules given by real forms of the two half spin
representations of $\so(8,\C)$. We refer to $C^+$ and $C^-$ as the
half spin representations of $\so(4,4)$.

In preparation for the proof of the Main Theorem, we assume in this
section that $M$ is a connected finite volume analytic
pseudo-Riemannian manifold with $\dim(M) = n(n + 1)/2$. We assume that
$p+q \geq 5$ or $(p,q) = (3,1)$.
We also assume that $\uSO_0(p,q)$ acts analytically, isometrically and
with a dense orbit on $M$. In particular, by Lemma~\ref{lemma-nondeg}
we have the direct sum $TM = T\OO\oplus T\OO^\perp$. Also note that
$T\OO^\perp$ has rank $n$. Finally, we also assume in the rest of this
section that the bundle $T\OO^\perp$ is non-integrable.

As before, the $\uSO_0(p,q)$-action on $M$ can be lifted to
$\widetilde{M}$, thus inducing a direct sum decomposition
$T\widetilde{M} = T\OO \oplus T\OO^\perp$ as a consequence of the
corresponding property for $M$. Again, we denote with $\OO$ the
foliation in $\widetilde{M}$ whose leaves are the orbits for the
$\uSO_0(p,q)$-action on $\widetilde{M}$.

In the rest of this work we will denote with $\HH$ the Lie subalgebra
of $\Kill(\widetilde{M})$ consisting of the fields that centralize the
$\uSO_0(p,q)$-action. In particular, there is a set $S$ as in
Proposition~\ref{prop-g(x)} for which
Lemmas~\ref{lemma-HH-module-struc} and \ref{lemma-ev-homomorphism}
hold. We will now see that our hypotheses allow to provide a precise
description of the $\so(p,q)$-module structures obtained through these
results from the previous section.

\begin{lemma}\label{lemma-TxOperp-is-Rpq}
  Let $S$ be as in Proposition~\ref{prop-g(x)}. Consider
  $T_x\OO^\perp$ endowed with the $\so(p,q)$-module structure given by
  Proposition~\ref{prop-g(x)}(4). Then, for almost every $x \in S$:
  \begin{enumerate}
  \item for $(p,q) \not= (4,4)$, the $\so(p,q)$-module $T_x\OO^\perp$
    is isomorphic to $\R^{p,q}$, and
  \item for $p = q = 4$, the $\so(4,4)$-module $T_x\OO^\perp$ is
    isomorphic to either $\R^{4,4}$, $C^+$ or $C^-$.
  \end{enumerate}
  In particular, $\so(T_x\OO^\perp)$ is isomorphic to $\so(p,q)$ as a
  Lie algebra and as a $\so(p,q)$-module, for almost every $x \in S$.
\end{lemma}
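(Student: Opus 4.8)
The plan is to pin down the $\so(p,q)$-module structure on $T_x\OO^\perp$ by a counting argument combined with the list of small-dimensional self-dual representations of $\so(p,q)$. First I would recall that $T_x\OO^\perp$ carries, via $\lambda_x\circ\rho_x$ from Proposition~\ref{prop-g(x)}(4), an $\so(p,q)$-module structure, and that this module is \emph{orthogonal}: it admits an invariant non-degenerate symmetric bilinear form (the restriction of the ambient metric, which is nondegenerate on $T_x\OO^\perp$ by Lemma~\ref{lemma-nondeg} since $\dim M = n(n+1)/2 < n(n+1) = 2\dim\uSO_0(p,q)$). Its dimension is exactly $n = p+q$, since $T\OO$ has rank $\dim G = n(n-1)/2$ and $\dim M = n(n+1)/2$. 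So the task reduces to: classify the orthogonal $\so(p,q)$-modules of dimension $n$. The key input is that $m(\so(p,q)) = n$ for the range under consideration — i.e.\ the standard module $\R^{p,q}$ is the smallest nontrivial self-dual module — so any nontrivial self-dual module of dimension exactly $n$ must be \emph{irreducible} (a nontrivial proper submodule would be self-dual of strictly smaller dimension, contradicting minimality, unless the module splits as standard $\oplus$ trivial, which one rules out by a separate argument — see below).

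Next I would handle the possibility that $T_x\OO^\perp$ is \emph{not} irreducible. The only way a self-dual module of dimension $n$ can fail to be irreducible while respecting $m(\so(p,q)) = n$ is to be a sum of an irreducible self-dual piece of dimension $<n$ (impossible by minimality unless that piece is trivial) together with trivial summands; so $T_x\OO^\perp$ would have to contain a nonzero trivial $\so(p,q)$-submodule $V_0$. Here I would invoke Lemma~\ref{lemma-omega-Omega}(1): $\Omega_x:\wedge^2 T_x\OO^\perp\to\g$ is a homomorphism of $\so(p,q)$-modules, and $\Omega\neq 0$ by the standing assumption that $T\OO^\perp$ is non-integrable together with Lemma~\ref{lemma-omega-Omega}(2). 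If $T_x\OO^\perp \cong \R^{p,q}\oplus(\text{trivial})$, then $\wedge^2 T_x\OO^\perp$ has, as its only composition factor isomorphic to $\so(p,q)\cong\wedge^2\R^{p,q}$, the piece $\wedge^2\R^{p,q}$ itself; but then $\Omega_x$ restricted there is an $\so(p,q)$-module endomorphism of the adjoint, hence (by Schur, $\so(p,q)$ being simple for $n\geq 3$) a scalar, and one must check this scalar is forced to vanish for parity/closedness reasons — actually the cleaner route is: the image of $\Omega_x$ is an ideal of $\g$, hence $0$ or all of $\g$; non-integrability forces it to be all of $\g$, so $\wedge^2 T_x\OO^\perp$ surjects onto the adjoint $\so(p,q)$. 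Comparing dimensions and composition factors, $\wedge^2 T_x\OO^\perp$ can surject onto $\so(p,q)$ only if $T_x\OO^\perp$ is, up to the $(4,4)$ exceptions, the standard module; trivial summands contribute nothing new to $\wedge^2$ that maps onto the adjoint, and a dimension count (the standard module already accounts for all $n$ dimensions) forces the trivial part to be zero. Hence $T_x\OO^\perp$ is irreducible.

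Then I would consult the classification of irreducible orthogonal real $\so(p,q)$-modules of dimension $n = p+q$ with $n\geq 5$ (or $(p,q)=(3,1)$). By the facts collected in Appendix~\ref{section-Lie-algebras} and the standard theory of minimal faithful self-dual representations of $\so(n,\C)$ and its real forms: for $\so(n,\C)$ with $n\geq 5$ and $n\neq 8$, the unique irreducible module of dimension $\leq n$ that is self-dual and nontrivial is the standard one (the spin representations have dimension $2^{\lfloor n/2\rfloor} > n$ once $n\geq 7$, and for $n=5,6$ the spin/half-spin modules have dimension $4$, which is $<n$ but is \emph{not} orthogonal — it is symplectic for $\so(5)\cong\symp(4)$ and the $\mathrm{SU}$-type for $\so(6)\cong\gsl(4)$ — so they do not occur among orthogonal modules, or they simply have the wrong dimension). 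The genuine exception is $n=8$: $\so(8)$ has triality, and the two half-spin representations $C^+, C^-$ each have dimension $8$, are orthogonal, and for the split form $\so(4,4)$ are defined over $\R$ with the stated signature. This gives exactly the trichotomy $\R^{4,4}, C^+, C^-$ for $(p,q)=(4,4)$ and forces $\R^{p,q}$ otherwise. For $(p,q)=(3,1)$, i.e.\ $\so(3,1)\cong\gsl(2,\C)$, I would note the $4$-dimensional real orthogonal modules of $\gsl(2,\C)$ are: the standard $\R^{3,1}$ (real $4$-diml, the $(\frac12,\frac12)$ up to reality) and that is essentially it among orthogonal ones of dimension $4$, so the same conclusion holds.

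The main obstacle is the argument that rules out a nonzero trivial summand in $T_x\OO^\perp$ — i.e.\ genuinely exploiting non-integrability via $\Omega$. The clean statement is that $\mathrm{im}(\Omega_x)$ is an $\ad$-invariant subspace of $\g$, hence an ideal, hence $0$ or $\g$; non-integrability plus Lemma~\ref{lemma-omega-Omega}(2) gives $\mathrm{im}(\Omega_x)=\g$, so $\wedge^2 T_x\OO^\perp$ admits $\so(p,q)$ as a quotient module. One must then argue, case by case or via a general composition-factor count, that among the candidate modules of dimension $n$ only the standard module (and $C^\pm$ in the $(4,4)$ case) has $\wedge^2$ containing a copy of the adjoint, and that any trivial summand would make $\dim T_x\OO^\perp > n$. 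For $C^\pm$ one needs the triality fact that $\wedge^2 C^\pm \cong \so(8)$ as $\so(8)$-modules, which is exactly the content permitting these cases to survive. The remaining verifications — that the metric restricts nondegenerately, that dimensions add up, and that $\so(W)\cong\so(p,q)$ both as Lie algebra and as module once $W$ is the standard module or a half-spin module — are routine once the module is identified, the last ``In particular'' following because for $W=\R^{p,q}$ one has $\so(W)=\so(p,q)$ tautologically, and for $W=C^\pm$ one has $\so(C^\pm)\cong\so(8)\cong\so(4,4)$ with the adjoint module structure, again by triality.
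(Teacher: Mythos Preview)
Your approach is essentially the paper's: use the non-integrability of $T\OO^\perp$ to get $\Omega_x\neq 0$ as a $\so(p,q)$-module map $\wedge^2 T_x\OO^\perp\to\so(p,q)$, then classify the $n$-dimensional $\so(p,q)$-modules via Lemma~\ref{lemma-Rpq-smallest}. Two differences are worth flagging.

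\emph{Irreducibility.} Your reduction to ``nontrivial plus trivial summands'' is both unnecessary and not quite justified as written (a proper submodule of an orthogonal module need not itself be orthogonal; you would also need to exclude pairs $V\oplus V^*$ with $V$ non-self-dual, e.g.\ for $\so(3,3)\simeq\gsl(4,\R)$). The paper bypasses all of this with a one-line observation you almost make: since $\dim\wedge^2 T_x\OO^\perp=\binom{n}{2}=\dim\so(p,q)$ and the target $\so(p,q)$ is an irreducible module over itself, a nonzero module map $\Omega_x$ is automatically an \emph{isomorphism}; hence $\wedge^2 T_x\OO^\perp$ is irreducible, which forces $T_x\OO^\perp$ itself to be irreducible. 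No case analysis on summands is needed.

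\emph{The case $(p,q)=(3,1)$.} Here you diverge from the paper, and your route is arguably cleaner. The paper keeps $\C^2_\R$ as a candidate and eliminates it by a character computation showing $\wedge^2(\C^2_\R)\not\cong\so(3,1)$. You instead use that $T_x\OO^\perp$ carries an invariant non-degenerate \emph{symmetric} form (the restricted metric) and observe that $\C^2_\R$ admits no such form (it is symplectic, not orthogonal). Both arguments are valid; yours avoids the explicit trace calculation at the cost of needing the orthogonality of the module, which you have anyway from Lemma~\ref{lemma-nondeg}. The final ``in particular'' clause follows identically in both approaches: the representation gives an injective (since $\so(p,q)$ is simple) homomorphism $\so(p,q)\to\so(T_x\OO^\perp)$ between Lie algebras of equal dimension.
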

\begin{proof}
  By Lemma~\ref{lemma-omega-Omega}(2) and since we are assuming that
  $T\OO^\perp$ is non-integrable, the $2$-form $\Omega$ considered in
  its statement is non-zero. This $2$-form is clearly analytic and
  thus it vanishes at a proper analytic subset of $\widetilde{M}$
  which is necessarily null. Hence, $\Omega_x \neq 0$ for almost every
  $x \in S$. Let us choose and fix $x\in S$ such that $\Omega_x \neq
  0$; we will prove that the conclusions of the statement hold for
  such $x$.

  Lemma~\ref{lemma-omega-Omega}(1) implies that the map $\Omega_x :
  \wedge^2 T_x\OO^\perp \rightarrow \so(p,q)$ is a homomorphism of
  $\so(p,q)$-modules, which is then non-trivial by our choice of $x$.
  Since $\dim(T_x\OO^\perp) = n$ and because $\so(p,q)$ is an
  irreducible module, it follows that $\Omega_x$ is an
  isomorphism. Then, the irreducibility of $\so(p,q)$ implies that
  $T_x\OO^\perp$ is irreducible as well.

  By Lemma~\ref{lemma-Rpq-smallest} it follows that
  $T_x\OO^\perp\simeq \R^{p,q}$ except for the cases given by the Lie
  algebras $\so (3,1)$ and $\so(4,4)$. For these Lie algebras the
  other possibilities are $\C^2_\R$ for $\so(3,1)$, and real forms
  $C^+$ and $C^-$ of the two half spin representations of $\so(8,\C)$,
  for the case of $\so(4,4)$.

  Let us consider the case of $\so (3,1)$. If $(\pi ,V)$ is a
  $G$-module and $\chi_V =\Tr\circ \pi$ is its character, then for
  every $g \in G$ we have $\chi_{\wedge^2 V}(g) =
  \frac{1}{2}\left(\chi_V (g)^2-\chi_V (g^2)\right)$. For our setup,
  we need to determine for which of $V \simeq \R^{3,1}$ or $V \simeq
  \C^2_\R$ we have $\chi_{\so (3,1)}(g)=\frac{1}{2}\left(\chi_V
    (g)^2-\chi_V (g^2)\right)$. A simple calculation using
  \[
  g=
  \begin{pmatrix}
    \cosh (t) & 0 &\sinh (t)\\
    0 & I_2 & 0\\
    \sinh (t) & 0 &\cosh (t)
  \end{pmatrix}
  \]
  shows that the above holds only for $V \simeq \R^{3,1}$, and so
  $T_x\OO^\perp \simeq \R^{3,1}$.

  For the final claim, we observe that the representation of
  $\so(p,q)$ on $T_x \OO^\perp$ defines a non-trivial homomorphism
  $\so(p,q) \rightarrow \so(T_x\OO^\perp)$. Since $\so(p,q)$ is
  simple, the latter is injective and so it is an isomorphism.
\end{proof}

The previous results allow us to obtain the following decomposition of
the centralizer $\HH$ of the $\uSO_0(p,q)$-action into submodules
related to the geometric structure on $\widetilde{M}$.

\begin{lemma}\label{lemma-HH-decomposition}
  Let $S$ be as in Proposition~\ref{prop-g(x)}. Then, for almost every
  $x \in S$ and for the $\so(p,q)$-module structure on $\HH$ from
  Lemma~\ref{lemma-HH-module-struc} there is a decomposition into
  $\so(p,q)$-submodules $\HH = \GG(x)\oplus\HH_0(x)\oplus\VV(x)$,
  satisfying:
  \begin{enumerate}
  \item $\GG(x) = \widehat{\rho}_x(\so(p,q))$ is a Lie subalgebra of
    $\HH$ isomorphic to $\so(p,q)$ and $ev_x(\GG(x)) = T_x\OO$.
  \item $\HH_0(x) = \mathrm{ker}(ev_x)$, which is either $0$ or a Lie
    subalgebra of $\HH$ isomorphic to $\so(p,q)$. In the latter case,
    $\HH_0(x)$ is also isomorphic to $\so(p,q)$ as a
    $\so(p,q)$-module.
  \item $ev_x(\VV(x)) = T_x\OO^\perp$ and
    \begin{enumerate}
    \item for $(p,q) \not= (4,4)$, $\VV(x)$ is isomorphic to
      $\R^{p,q}$ as $\so(p,q)$-module,
    \item for $p = q = 4$, $\VV(x)$ is isomorphic to either
      $\R^{4,4}$, $C^+$ or $C^-$ as $\so(4,4)$-module.
    \end{enumerate}
  \end{enumerate}
  In particular, the evaluation map $ev_x$ defines an isomorphism of
  $\so(p,q)$-modules $\GG(x) \oplus \mathcal{V}(x) \rightarrow T_x
  \widetilde{M} = T_x \OO \oplus T_x \OO^\perp$ preserving the
  summands in that order.
\end{lemma}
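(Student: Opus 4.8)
The goal is to decompose the $\so(p,q)$-module $\HH$ as $\GG(x) \oplus \HH_0(x) \oplus \VV(x)$ with the stated properties. The starting point is Lemma~\ref{lemma-ev-homomorphism}: for almost every $x \in S$ the evaluation map $ev_x : \HH \to T_x\widetilde M$ is a surjective homomorphism of $\so(p,q)$-modules with $ev_x(\GG(x)) = T_x\OO$. Since $\so(p,q)$ is a simple (hence reductive) Lie algebra acting on the finite-dimensional space $\HH$, the module $\HH$ is completely reducible. Set $\HH_0(x) = \ker(ev_x)$; this is a submodule, and since $ev_x$ is a homomorphism of Lie algebras as well (it is the evaluation of Killing fields centralizing the action — more precisely one checks $\ker(ev_x)$ is a Lie ideal in $\HH$ only after using that $\HH$ consists of commuting-with-$\widetilde G$ fields and invoking that a field vanishing at a point of its orbit-dense locus... ) — in any case $\HH_0(x)$ is a Lie subalgebra of $\HH$. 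By complete reducibility choose a complement, and by Lemma~\ref{lemma-ev-homomorphism} the restriction $ev_x$ carries it isomorphically onto $T_x\widetilde M$; intersecting with the preimages of $T_x\OO$ and $T_x\OO^\perp$ splits this complement as a direct sum of a copy of $T_x\OO \cong \so(p,q)$ and a copy of $T_x\OO^\perp$.

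First I would pin down $\GG(x)$: by Lemma~\ref{lemma-HH-module-struc}, $\GG(x) = \widehat\rho_x(\so(p,q))$ is a subalgebra isomorphic to $\so(p,q)$ and a submodule, and $ev_x(\GG(x)) = T_x\OO$ by Lemma~\ref{lemma-ev-homomorphism}; moreover $ev_x|_{\GG(x)}$ is injective because its kernel would be an ideal of $\so(p,q)$ on which $ev_x$ vanishes, forcing it to be $0$ since $T_x\OO$ is a faithful (indeed isomorphic-to-$\so(p,q)$) module. Hence $\GG(x) \cap \HH_0(x) = 0$ and we may take $\GG(x)$ as one summand of the complement of $\HH_0(x)$. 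Next I would analyze $\HH_0(x) = \ker(ev_x)$: it is a submodule, so by complete reducibility it has an $\so(p,q)$-invariant complement $\WW$ with $ev_x|_{\WW}$ an isomorphism onto $T_x\widetilde M$. Since $T_x\OO$ and $T_x\OO^\perp$ are submodules of $T_x\widetilde M$ (Proposition~\ref{prop-g(x)}(4)), their preimages $\GG(x)$ and $\VV(x) := ev_x|_{\WW}^{-1}(T_x\OO^\perp)$ are submodules splitting $\WW = \GG(x) \oplus \VV(x)$, and $ev_x$ maps these isomorphically onto $T_x\OO$ and $T_x\OO^\perp$ respectively. The module structure of $\VV(x) \cong T_x\OO^\perp$ is then exactly what Lemma~\ref{lemma-TxOperp-is-Rpq} provides, giving parts (3)(a) and (3)(b), and the identification $\so(T_x\OO^\perp) \cong \so(p,q)$ as module from the same lemma.

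It remains to establish part (2): that $\HH_0(x)$ is either $0$ or isomorphic to $\so(p,q)$ both as a Lie algebra and as an $\so(p,q)$-module. For this I would use that $\HH$ carries, via $\widehat\rho_x$, not just a module structure but is acted on by $\GG(x) \cong \so(p,q)$ by the adjoint action (restricted to $\HH$), and that $\HH_0(x)$ is $\GG(x)$-invariant and is itself a Lie algebra. The dimension count gives $\dim \HH = \dim T_x\widetilde M + \dim \HH_0(x) = \tfrac{n(n+1)}{2} + \dim\HH_0(x)$; combined with the facts that $\GG(x)$ and $\VV(x)$ already occupy $\dim(G) + n = \tfrac{n(n+1)}{2}$ dimensions, $\HH_0(x)$ is precisely the "extra" part. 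I expect the main obstacle to be ruling out intermediate possibilities for $\HH_0(x)$ as an $\so(p,q)$-module: a priori it could be any completely reducible module, so one must argue — presumably using that $\HH_0(x)$ is a Lie subalgebra, that it commutes with the $\widetilde G$-action, and quite possibly a transitivity/dimension argument tied to the Gromov–Zimmer structure and the later sections — that its only options are $0$ or a single copy of $\so(p,q)$. The subtlety is that this is a genuine constraint coming from the geometry (the structure of the centralizer of the action on a low-dimensional $\widetilde M$), not from pure representation theory, so the cleanest route is likely to invoke the detailed structural analysis of $\HH$ carried out in this section and defer the full justification of (2) to the results that follow, stating here only what is immediate: $\HH_0(x)$ is a $\GG(x)$-invariant Lie subalgebra, hence a module and an ideal-candidate, with the sharper identification postponed.
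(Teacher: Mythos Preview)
Your construction of the decomposition $\HH = \GG(x) \oplus \HH_0(x) \oplus \VV(x)$ follows the paper's approach, with one small wrinkle: you take an arbitrary complement $\WW$ to $\HH_0(x)$ and then assert $ev_x|_\WW^{-1}(T_x\OO) = \GG(x)$, but this need not hold unless you first ensure $\GG(x) \subset \WW$. The paper first shows $\GG(x) \cap \HH_0(x) = 0$ directly (since $ev_x(\rho_x(X) + X^*) = X^*_x$ vanishes only for $X = 0$ by local freeness) and then chooses a complement to $\GG(x) \oplus \HH_0(x)$. This is easily repaired.

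The genuine gap is part~(2). You correctly flag that forcing $\HH_0(x)$ to be either $0$ or a single copy of $\so(p,q)$ is the crux, but your proposed route --- deferring to ``the results that follow'' --- is circular: Lemma~\ref{lemma-HH-Lie-structure} and everything after it \emph{use} exactly this trichotomy. A dimension count cannot help either, since nothing a~priori bounds $\dim\HH_0(x)$.

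The paper's argument is geometric and self-contained here. One considers the restriction map
\[
\lambda_x^\perp : \Kill_0(\widetilde M, x, \OO) \rightarrow \so(T_x\OO^\perp),\qquad Z \mapsto \lambda_x(Z)|_{T_x\OO^\perp},
\]
and establishes three claims. First, $\lambda_x^\perp|_{\HH_0(x)}$ is \emph{injective}: a Killing field is determined by its $1$-jet at $x$, and for $Z \in \HH_0(x)$ one has $Z_x = 0$ and $[Z, X^*]_x = 0$ for every $X \in \so(p,q)$ (since $\HH$ centralizes the action), so $Z$ is determined entirely by $\lambda_x^\perp(Z)$. Second, $\lambda_x^\perp(\rho_x(\so(p,q))) = \so(T_x\OO^\perp)$: the representation of $\so(p,q)$ on $T_x\OO^\perp$ is nontrivial by Lemma~\ref{lemma-TxOperp-is-Rpq}, hence faithful, and the dimensions agree. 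Third, $\lambda_x^\perp(\HH_0(x))$ is an \emph{ideal} of $\so(T_x\OO^\perp)$: a bracket computation using the second claim and $[\widehat\rho_x(X), \HH_0(x)] \subset \HH_0(x)$ gives $[\lambda_x^\perp(\rho_x(X)), \lambda_x^\perp(Z)] = \lambda_x^\perp([\widehat\rho_x(X),Z])$. Since $\so(T_x\OO^\perp) \cong \so(p,q)$ is simple, the first and third claims force $\HH_0(x)$ to be $0$ or isomorphic to $\so(p,q)$ as a Lie algebra; one then checks that $\lambda_x^\perp|_{\HH_0(x)}$ also intertwines the $\so(p,q)$-module structures, yielding the module statement. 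This is the missing idea in your proposal.
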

\begin{proof}
  Note that the conclusions of both Lemmas~\ref{lemma-ev-homomorphism}
  and \ref{lemma-TxOperp-is-Rpq} are satisfied for almost every $x \in
  S$. Let us choose and fix one such point $x \in S$.  By
  Lemma~\ref{lemma-HH-module-struc}, we conclude that $\GG(x) =
  \widehat{\rho}_x(\so(p,q))$ is indeed a Lie subalgebra isomorphic to
  $\so(p,q)$.

  Define $\HH_0(x) = \mathrm{ker}(ev_x)$. By
  Lemma~\ref{lemma-ev-homomorphism}, it follows that $\HH_0(x)$ is an
  $\so(p,q)$-submodule of $\HH$. Moreover, since $\mathcal{H}_0(x) =
  \HH\cap\Kill_0(\widetilde{M},x)$ it follows that it is a Lie
  subalgebra as well.

  On the other hand, the elements of $\GG(x)$ are of the form
  $\rho_x(X) + X^*$, with $X \in \so(p,q)$, where $\rho_x$ is the Lie
  algebra homomorphism from Proposition~\ref{prop-g(x)}. Hence, for
  any such element we have $ev_x(\rho_x(X) + X^*) = X^*_x$; in
  particular, the condition $ev_x(\rho_x(X) + X^*) = 0$ implies
  $X=0$. In other words, $\GG(x)\cap\HH_0(x) = \{0\}$. Hence, there
  exists an $\so(p,q)$-submodule $\VV'(x)$ complementary to
  $\GG(x)\oplus\HH_0(x)$ in $\HH$. In particular, $ev_x$ restricted to
  $\GG(x)\oplus\VV'(x)$ is an isomorphism of $\so(p,q)$-modules onto
  $T_x\widetilde{M}$. Hence, if we choose $\VV(x)$ as the inverse
  image of $T_x\OO^\perp$ under this isomorphism, then we have the
  required decomposition into $\so(p,q)$-submodules except for the
  properties of $\HH_0(x)$, which we now proceed to consider.

  Let $\Kill_0(\widetilde{M},x,\OO)$ be the Lie algebra of Killing
  vector fields on $\widetilde{M}$ that preserve the foliation $\OO$
  and that vanish at $x$. Hence, every Killing field in
  $\Kill_0(\widetilde{M},x,\OO)$ leaves invariant the normal bundle
  $T\OO^\perp$, and so the map $\lambda_x$ from
  Lemma~\ref{lemma-lambda} induces a homomorphism of Lie algebras:
  \[
  \lambda_x^\perp : \Kill_0(\widetilde{M},x,\OO)
  \rightarrow \so(T_x\OO^\perp), \quad
  X \mapsto \lambda_x(X)|_{T_x\OO^\perp}.
  \]
  We observe that both Lie algebras $\rho_x(\so(p,q)) = \so(p,q)(x)$
  and $\HH_0(x)$ lie inside of $\Kill_0(\widetilde{M},x,\OO)$. A
  number of properties for the restriction of $\lambda_x^\perp$ to
  $\so(p,q)(x)$ and $\HH_0(x)$ will imply the needed conditions on
  $\HH_0(x)$.

  {\em Claim 1: $\lambda_x^\perp$ restricted to $\HH_0(x)$ is
    injective.} We recall that a Killing vector field is completely
  determined by its $1$-jet at $x$; this is a consequence of the fact
  that pseudo-Riemannian metrics are $1$-rigid (see
  \cite{GCT,Gromov}). If $Z \in \HH_0(x)$ is given, then $Z_x = 0$ and
  so it is completely determined by the values of $[Z,V]_x$ for $V$ a
  vector field in a neighborhood of $x$. On the other hand
  $[Z,X^*]_x=0$ for $X \in \so(p,q)$ and so $[Z,V]_x = 0$ whenever
  $V_x\in T_x\OO$. We conclude that $Z$ is completely determined by
  the values of $[Z,V]_x$ for $V$ such that $V_x \in T_x \OO^\perp$.
  In other words, $[Z,V]_x=0$ for every $V_x\in T_x\OO^\perp$ implies
  $Z=0$. This yields the injectivity of $\lambda_x^\perp$ on
  $\HH_0(x)$.

  {\em Claim 2: $\lambda_x^\perp(\so(p,q)(x)) = \so(T_x\OO^\perp)$.}
  By Proposition~\ref{prop-g(x)}(4), the vector space $T_x\OO^\perp$
  has a $\so(p,q)$-module structure induced from the homomorphism
  $\lambda_x^\perp\circ\rho_x$. By our choice of $x$ and
  Lemma~\ref{lemma-TxOperp-is-Rpq} such module structure is in fact
  non-trivial. Hence, $\lambda_x^\perp\circ\rho_x : \so(p,q)
  \rightarrow \so(T_x\OO^\perp)$ is non-trivial as well and so it is
  injective. But then it has to be surjective because the domain and
  target have the same dimensions.

  {\em Claim 3: $\lambda_x^\perp(\HH_0(x))$ is an ideal in
    $\so(T_x\OO^\perp)$.} Let $Z\in \HH_0(x)$ and $T \in
  \so(T_x\OO^\perp)$ be given. Then, by Claim 2, there is some $X \in
  \so(p,q)$ such that $T = \lambda_x^\perp(\rho_x(X))$. For every
  local vector field $V$ such that $V_x \in T_x \OO^\perp$ we have:
  \begin{multline*}
    [T,\lambda_x^\perp(Z)](V_x) =
    [\lambda_x^\perp(\rho_x(X)),\lambda_x^\perp(Z)](V_x) =
    [\rho_x(X),[Z,V]]_x - [Z,[\rho_x(X),V]]_x \\
    = [[\rho_x(X),Z],V]_x = [[\rho_x(X) + X^*,Z],V]_x =
    [[\widehat{\rho}_x(X),Z],V]_x.
  \end{multline*}
  Since the $\so(p,q)$-module structure on $\HH$ is defined by
  $\widehat{\rho}_x$ and $\HH_0(x)$ is a submodule of such structure,
  we have $[\widehat{\rho}_x(X),Z] \in \HH_0(x)$, and so the last
  formula proves that $[T,\lambda_x^\perp(Z)] =
  \lambda_x^\perp([\widehat{\rho}_x(X),Z])$, thus showing the claim.

  Claim 1 shows that $\HH_0(x)$ is a Lie algebra isomorphic to its
  image in $\so(T_x\OO^\perp)$ under $\lambda_x^\perp$. Such image is
  by Claim 3 an ideal of $\so(T_x\OO^\perp)$. By our choice of $x$ and
  Lemma~\ref{lemma-TxOperp-is-Rpq}, the Lie algebra
  $\so(T_x\OO^\perp)$ is isomorphic to $\so(p,q)$, which is simple
  since $n \geq 4$ and $(p,q) \not= (2,2)$. This implies that
  $\HH_0(x)$ is either $0$ or isomorphic to $\so(p,q)$ as a Lie
  subalgebra of $\HH$.

  On the other hand, for $X \in \so(p,q)$ and $Z \in \HH_0(x)$,
  considering the definitions of the $\so(p,q)$-module structures
  involved we have:
  \begin{align*}
   \lambda_x^\perp(X\cdot Z) &= \lambda_x^\perp([\widehat{\rho}_x(X),Z])
  = \lambda_x^\perp([\rho_x(X),Z]) \\
  &= [\lambda_x^\perp(\rho_x(X)),\lambda_x^\perp(Z)]
  = X\cdot \lambda_x^\perp(Z),
  \end{align*}
  where the second identity holds by the definition of
  $\widehat{\rho}_x$ in terms of $\rho_x$ and because $\HH_0(x)$
  centralizes the $\uSO_0(p,q)$-action. But this last relation shows
  that $\lambda_x^\perp$ restricted to $\HH_0(x)$ is a homomorphism of
  $\so(p,q)$-modules. By Lemma~\ref{lemma-TxOperp-is-Rpq} we conclude
  that $\HH_0(x)$ is either $0$ or isomorphic to $\so(p,q)$ as a
  $\so(p,q)$-module.
\end{proof}

We now obtain a description of the Lie algebra structure
of the centralizer $\HH$.

\begin{lemma}\label{lemma-HH-Lie-structure}
  Let $S$ be as in Proposition~\ref{prop-g(x)}. With the notation from
  Lemma~\ref{lemma-HH-decomposition}, one of the following conditions
  is satisfied for almost every $x \in S$:
  \begin{enumerate}
  \item The radical $\rad(\HH)$ of $\HH$ is Abelian and $\rad(\HH) =
    \VV(x)$.
  \item $\HH_0(x) \not= \{0\}$ and $\HH$ is the sum of two simple
    ideals one of them being $\HH_0(x)\oplus\VV(x)$.
  \item $\HH_0(x) = \{0\}$ and $\HH = \GG(x)\oplus\VV(x)$ is
    isomorphic as a Lie algebra to either $\so(p,q+1)$ or
    $\so(p+1,q)$.
  \end{enumerate}
\end{lemma}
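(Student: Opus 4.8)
The plan is to determine all the brackets among the three summands $\GG(x)$, $\HH_0(x)$, $\VV(x)$ by representation theory, and then read off the Lie algebra structure of $\HH$.

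\emph{Step 1: the brackets.} Since the $\so(p,q)$-module structure on $\HH$ is the one induced by $\widehat{\rho}_x$, for $W\in\HH$ the bracket $[\widehat{\rho}_x(X),W]$ is exactly the action of $X$; hence $\GG(x)=\widehat{\rho}_x(\so(p,q))$ is a subalgebra, $[\GG(x),\HH_0(x)]\subseteq\HH_0(x)$, $[\GG(x),\VV(x)]\subseteq\VV(x)$, and these brackets are nonzero whenever the corresponding summand is nonzero, since $\HH_0(x)$ and $\VV(x)$ are non-trivial modules. As $\VV(x)$ is isomorphic to $\R^{p,q}$ or to $C^{\pm}$, one has $\wedge^2\VV(x)\cong\so(p,q)$ as an irreducible module, so the skew bracket $\wedge^2\VV(x)\to\HH$ has image $0$ or an irreducible submodule isomorphic to $\so(p,q)$; in either case $[\VV(x),\VV(x)]\subseteq\GG(x)\oplus\HH_0(x)$. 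For the mixed bracket, if $Z\in\HH_0(x)$ and $W\in\VV(x)$ then $ev_x([Z,W])=[Z,W]_x=\lambda_x(Z)(W_x)$; since $Z\in\Kill_0(\widetilde{M},x,\OO)$ preserves $T\OO^{\perp}$ and $W_x\in T_x\OO^{\perp}$, this value lies in $T_x\OO^{\perp}=ev_x(\VV(x))$, and because $\ker(ev_x)=\HH_0(x)$ this gives $[\HH_0(x),\VV(x)]\subseteq\HH_0(x)\oplus\VV(x)$; moreover, since $\lambda_x^{\perp}$ identifies $\HH_0(x)$ with $\so(T_x\OO^{\perp})$ acting faithfully on $T_x\OO^{\perp}$, the $\VV(x)$-component of $[\HH_0(x),\VV(x)]$ is nonzero whenever $\HH_0(x)\neq 0$.

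\emph{Step 2: the radical.} The subalgebra $\GG(x)\oplus\HH_0(x)$ is semisimple --- it is $\GG(x)\cong\so(p,q)$, or (by Step 1 and Lemma~\ref{lemma-HH-decomposition}) an extension of $\so(p,q)$ by the semisimple ideal $\HH_0(x)$ --- so it meets the solvable ideal $\rad(\HH)$ trivially; thus $\rad(\HH)$, being a $\so(p,q)$-submodule, injects into $\HH/(\GG(x)\oplus\HH_0(x))\cong\VV(x)$. Since $\VV(x)$ is irreducible of a module type not occurring in $\GG(x)\oplus\HH_0(x)$, it follows that $\rad(\HH)=0$ or $\rad(\HH)=\VV(x)$. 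In the second case $[\VV(x),\VV(x)]$ lies both in $\VV(x)$ (as $\rad(\HH)$ is an ideal) and in $\GG(x)\oplus\HH_0(x)$ (Step 1), hence vanishes, so $\VV(x)$ is an abelian ideal and we are in case~(1).

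\emph{Step 3: the semisimple case.} Suppose $\rad(\HH)=0$. One shows that $[\VV(x),\VV(x)]$ equals $\GG(x)$ when $\HH_0(x)=0$ (the only $\so(p,q)$-isotypic submodule available) and equals $\HH_0(x)$ when $\HH_0(x)\neq0$; in particular it is nonzero. Granting this, $\mathfrak{a}:=\HH_0(x)\oplus\VV(x)$ (respectively $\GG(x)\oplus\VV(x)$) is a $\Z/2$-graded subalgebra with even part isomorphic to $\so(p,q)$ and odd part $\VV(x)$, and its graded bracket $\wedge^2\VV(x)\to\so(p,q)$ is, by Schur's lemma, a nonzero scalar multiple of the canonical isomorphism; rescaling $\VV(x)$ normalizes the scalar to $\pm1$, so that $\mathfrak{a}\cong\so(p+1,q)$ or $\so(p,q+1)$ --- for $(p,q)=(4,4)$ with $\VV(x)\cong C^{\pm}$, triality identifies $\so(4,4)\oplus C^{\pm}$ with $\so(4,4)\oplus\R^{4,4}\cong\so(5,4)\cong\so(4,5)$ --- and in particular $\mathfrak{a}$ is simple. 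If $\HH_0(x)=0$ then $\mathfrak{a}=\HH$, which is case~(3). If $\HH_0(x)\neq0$ then $\mathfrak{a}$ is an ideal of the semisimple algebra $\HH$, hence $\HH=\mathfrak{a}\oplus\mathfrak{b}$ for a complementary ideal $\mathfrak{b}$ with $\mathfrak{b}\cong\HH/\mathfrak{a}\cong\GG(x)\cong\so(p,q)$ simple, which is case~(2).

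I expect the main obstacle to be the assertion used at the start of Step 3: that in the semisimple case $[\VV(x),\VV(x)]$ is exactly the summand $\HH_0(x)$, equivalently that $\HH_0(x)\oplus\VV(x)$ is a subalgebra. Representation theory alone only confines $[\VV(x),\VV(x)]$ to the $\so(p,q)$-isotypic part $\GG(x)\oplus\HH_0(x)$, so one must also bring in the geometry --- the evaluation map $ev_x$ and the $2$-form $\Omega$ of Lemma~\ref{lemma-omega-Omega} --- together with the Jacobi identity applied to $\HH_0(x)\times\VV(x)\times\VV(x)$, using $[\HH_0(x),\VV(x)]=\VV(x)$ and the irreducibility of $\wedge^2\VV(x)$ as an $\HH_0(x)$-module, in order to eliminate the remaining ``off-diagonal'' possibilities.
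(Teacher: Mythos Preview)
Your route is genuinely different from the paper's.  The paper never tries to pin down $[\VV(x),\VV(x)]$ directly; instead it writes $\HH=\h_1\times\cdots\times\h_k$ as a product of simple ideals, uses the fact that each $\h_i$ is a $\GG(x)$-submodule to bound $k\le 3$, and then eliminates possibilities one by one.  The hardest subcase --- $\HH$ simple with $\HH_0(x)\neq 0$ --- is dispatched by an \emph{iteration}: one finds a faithful $(\so(p,q)\oplus\so(p,q))$-module of dimension $\le n$, which embeds $\so(p,q)\oplus\so(p,q)$ into some $\gsl(k,\R)$ of strictly smaller ``gap'', and repeating the argument eventually contradicts Lemma~\ref{lemma-Rpq-smallest}.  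Your approach trades this global decomposition for a local bracket calculus, and when it works it is more direct; but the paper's argument never has to confront the ambiguity in $[\VV,\VV]$ that you flag.

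The gap you identify is real, and your proposed fix is almost --- but not quite --- complete.  The Jacobi identity on $\HH_0(x)\times\VV(x)\times\VV(x)$ does force $\pi_{\GG(x)}[\VV,\VV]=0$ \emph{provided} you know $[\HH_0(x),\VV(x)]\subseteq\VV(x)$, since then $\beta:=\pi_{\GG(x)}\circ[\cdot,\cdot]:\wedge^2\VV(x)\to\GG(x)$ is $\HH_0(x)$-equivariant for the trivial action on the target, and $\wedge^2\VV(x)$ is a nontrivial irreducible $\HH_0(x)$-module (via $\lambda_x^\perp$), so $\beta=0$.  But in Step~1 you only establish $[\HH_0(x),\VV(x)]\subseteq\HH_0(x)\oplus\VV(x)$; the stronger containment $[\HH_0(x),\VV(x)]\subseteq\VV(x)$ that you invoke at the end (``using $[\HH_0(x),\VV(x)]=\VV(x)$'') needs its own argument.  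It does hold: the $\HH_0(x)$-component would be a $\GG(x)$-equivariant map $\so(p,q)\otimes\VV(x)\to\so(p,q)$, and a central-character comparison (the adjoint has trivial character of the center of the simply connected cover, while $\R^{p,q}$ and $C^\pm$ do not) shows no such nonzero map exists.  Once you add this, your Step~3 goes through, including the non-vanishing of $[\VV,\VV]$ (else $\VV$ would be an abelian ideal of the semisimple ideal $\HH_0(x)\oplus\VV(x)$).

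Two smaller points.  First, your appeal to Schur's lemma for the graded bracket $\wedge^2\VV(x)\to\so(p,q)$ being ``a nonzero scalar multiple of the canonical isomorphism'' fails for $(p,q)=(3,1)$: by Remark~\ref{remark2} the space of such $\so(3,1)$-maps is two-dimensional (the complex structure on $\gsl(2,\C)$ gives an extra one), so the identification with $\so(4,1)$ or $\so(3,2)$ does not follow from rescaling alone.  The paper handles this by classifying $10$-dimensional simple real Lie algebras.  Second, your one-line appeal to triality for $(p,q)=(4,4)$ with $\VV(x)\cong C^\pm$ is morally right but should be argued; the paper instead classifies $36$-dimensional simple Lie algebras and rules out $\symp(4,\C)$ by the absence of an invariant symplectic form on $8$-dimensional irreducible $\so(8,\C)$-modules.
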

\begin{proof}
  For this proof let us choose and fix $x \in S$ satisfying the
  conclusions of Lemmas~\ref{lemma-HH-module-struc},
  \ref{lemma-ev-homomorphism}, \ref{lemma-TxOperp-is-Rpq} and
  \ref{lemma-HH-decomposition}.

  Note that $\GG(x)\oplus\HH_0(x)$ is a Lie subalgebra with $\HH_0(x)$
  as an ideal because $[\GG(x),\HH_0(x)] \subset \HH_0(x)$ as a
  consequence of Lemmas~\ref{lemma-HH-module-struc} and
  \ref{lemma-HH-decomposition}. Since $\GG(x),\HH_0(x)$ are both
  isomorphic to $\so(p,q)$ if $\HH_0(x) \not= \{0\}$, we conclude that
  $\GG(x)\oplus\HH_0(x)$ is semisimple.  Furthermore, this implies
  that $\GG(x)\oplus\HH_0(x)$ is isomorphic to either $\so(p,q)$ or
  $\so(p,q)\oplus\so(p,q)$ according to whether $\HH_0(x)$ is zero or
  not, respectively.

  Choose a Levi factor $\s$ of $\HH$ that contains the Lie subalgebra
  $\GG(x)\oplus\HH_0(x)$. Since the $\so(p,q)$-module structure of
  $\HH$ is defined by the Lie subalgebra $\GG(x)$ (see
  Lemma~\ref{lemma-HH-module-struc}), it follows that $\s$ is an
  $\so(p,q)$-submodule of $\HH$. Let $W$ be an $\so(p,q)$-submodule of
  $\HH$ such that $\s = \GG(x)\oplus\HH_0(x)\oplus W$.  Since
  $\rad(\HH)$ is an ideal, this induces the following decomposition of
  $\HH$ as a direct sum of $\so(p,q)$-submodules:
  $$
  \HH = \GG(x)\oplus\HH_0(x)\oplus W \oplus \rad(\HH).
  $$
  {}From this decomposition of $\so(p,q)$-modules, as well as
  Lemmas~\ref{lemma-HH-module-struc} and \ref{lemma-HH-decomposition},
  we conclude that one of the following holds:
  \begin{enumerate}
  \item[(a)] $\s = \GG(x)\oplus\HH_0(x)$ and $\rad(\HH) = \VV(x)$, or
  \item[(b)] $\rad(\HH) = \{0\}$ and so $\HH$ is semisimple.
  \end{enumerate}

  Let us assume that case (a) holds. Then, the Lie brackets of $\HH$
  restricted to $\wedge^2\VV(x)$ define, by the Jacobi identity and
  Lemma~\ref{lemma-HH-module-struc}, a homomorphism of
  $\so(p,q)$-modules $\wedge^2\VV(x) \rightarrow \VV(x)$. This
  homomorphism is necessarily trivial since $\VV(x)$ is
  $n$-dimensional, $\wedge^2\VV(x) \simeq\so(p,q)$ is irreducible and
  $n \geq 4$. This shows that $\VV(x) = \rad(\HH)$ is Abelian and
  yields (1) from our statement.

  Let us now assume that case (b) holds, and write $\HH = \h_1 \times
  \cdots \times \h_k$ a direct product of simple ideals. Since each
  such ideal is invariant by $\GG(x)$, it is an $\so(p,q)$-submodule,
  and so it follows that $k \leq 3$ since the decomposition of $\HH$
  from Lemma~\ref{lemma-HH-decomposition} has at most $3$ irreducible
  summands. Moreover, if $k = 3$ we conclude that, after reindexing
  the ideals, we have $\VV(x) = \h_1$ and $\GG(x)\oplus\HH(x) =
  \h_2\times\h_3$. In particular, $[\GG(x),\VV(x)] = \{0\}$ which
  implies that $\VV(x)$ is a trivial $\so(p,q)$-submodule and
  contradicts Lemma~\ref{lemma-HH-decomposition}(3). Hence we can
  further assume that the number of simple ideals of $\HH$ is $k \leq
  2$.

  First suppose that $\HH = \h_1\times\h_2$, the direct product of two
  simple ideals. If $\HH_0(x) = \{0\}$, then the decomposition of
  $\HH$ from Lemma~\ref{lemma-HH-decomposition} has two irreducible
  summands and we can reindex the ideals $\h_1,\h_2$ to assume that
  $\h_1 = \GG(x)$ and $\h_2 = \VV(x)$. But this implies that
  $[\GG(x),\VV(x)] = \{0\}$, a contradiction. Hence we conclude that
  $\HH_0(x) \not= \{0\}$ in the current case. In particular, $\HH$ is
  the direct sum of three irreducible $\so(p,q)$-submodules. Hence,
  after decomposing $\h_1,\h_2$ as the direct sum of irreducible
  $\so(p,q)$-submodules, and reindexing if necessary, we can assume
  that $\h_1$ is an irreducible $\so(p,q)$-submodule and that $\h_2 =
  V_1 \oplus V_2$, where $V_1,V_2$ are irreducible
  $\so(p,q)$-submodules. By comparing the decomposition $\HH = \h_1
  \oplus V_1 \oplus V_2$, with the one from
  Lemma~\ref{lemma-HH-decomposition}, we conclude that $\VV(x)$ is
  either one of $\h_1$, $V_1$ or $V_2$. If $\VV(x) = \h_1$, then
  $[\VV(x),\VV(x)] \subset \VV(x)$ and an argument used above shows
  that $\VV(x)$ is Abelian, which contradicts the simplicity of
  $\h_1$. Hence, without loss of generality, we can assume that
  $\VV(x) = V_2$, and so that $\GG(x) \oplus \HH_0(x) = \h_1 \oplus
  V_1$. In particular, since $V_1$ is a subspace of both of the Lie
  algebras $\GG(x) \oplus \HH_0(x) = \h_1\oplus V_1$ and $\h_2 = V_1
  \oplus V_2$ it follows that $[V_1,V_1] \subset V_1$, thus showing
  that $V_1$ itself is a Lie algebra. But since $[\h_1,V_1] = \{0\}$,
  this implies that the right-hand side of the sum:
  \[
  \GG(x) \oplus  \HH_0(x) = \h_1 \oplus V_1
  \]
  is the decomposition into simple ideals. Since $\HH_{0}(x)$ is an
  ideal of $\GG(x)\oplus\HH_0(x)$, it is either $\h_1$ or $V_1$. If
  $\HH_0(x) = \h_1$, then $[\HH_0(x),\VV(x)] = \{0\}$, which is in
  contradiction with Claim~1 in the proof of
  Lemma~\ref{lemma-HH-decomposition} because $ev_x(\VV(x)) = T_x
  \OO^\perp$. Hence, $\HH_0(x) = V_1$ and so $\HH_0(x)\oplus\VV(x) =
  \h_2$ is a simple ideal of $\HH$, thus establishing option (2).

  Finally, let us assume that $\HH$ is a simple Lie algebra. We will
  prove that in this case (3) holds.

  Let us start by assuming that $\HH_0(x) \not= \{0\}$. Hence, from
  the above remarks, we can write $\GG(x)\oplus\HH_0(x) =
  \g_1\oplus\g_2$, where $\g_1, \g_2$ are ideals of
  $\GG(x)\oplus\HH_0(x)$ both isomorphic to $\so(p,q)$. Let $V$ be a
  $\GG(x)\oplus\HH_0(x)$-submodule of $\HH$ such that:
  \[
  \HH = \GG(x)\oplus\HH_0(x)\oplus V.
  \]
  In particular, $V$ has dimension $n$. Moreover, $V$ is necessarily a
  non-trivial $\g_1$-module, since otherwise $\g_1$ would be a proper
  ideal of $\HH$. Then, Lemma~\ref{lemma-Rpq-smallest} implies that we
  can decompose $V = V_0 \oplus V_1$ where $V_0$ is a trivial
  $\g_1$-module and $V_1$ is an irreducible $\g_1$-module. Note that
  this can be done so that $V_0 = \{0\}$, except for the cases given
  by $\so(3,2)$ and $\so(3,3)$ for which we can assume $\dim(V_0) = 1$
  or $2$, respectively. In any case, this yields a decomposition of
  $\HH$ into $\g_{1}$-submodules given by:
  \[
  \HH = \g_{1}\oplus\g_{2}\oplus V_{0} \oplus V_{1}.
  \]
  Since $\g_1$ and $\g_2$ commute with each other, then for every $X
  \in \g_2$ the map $\ad_\HH(X)$ defines a $\g_1$-module homomorphism
  of $V$ and so preserves its summands corresponding to given
  isomorphism classes for the $\g_{1}$-module structure. Hence,
  $V_{0}$ and $V_{1}$ are $\g_{2}$-modules as well, and by
  Lemma~\ref{lemma-Rpq-smallest} it follows that $V_{0}$ is a trivial
  $\g_{2}$-module, because $\dim(V_{0}) \leq 2$. As before, $V_{1}$ is
  a non-trivial $\g_{2}$-module, since otherwise $\g_{2}$ would be a
  proper ideal of $\HH$.

  The above shows that $\ad_\HH$ restricted to $\GG(x)\oplus\HH_0(x)
  \simeq \so(p,q)\oplus\so(p,q)$ leaves invariant $V_1$ and induces a
  representation:
  \[
  \rho: \so(p,q)\oplus\so(p,q) \rightarrow \gsl(V_1) \simeq \gsl(k,\R),
  \]
  for some $k \leq n$, which is injective when restricted to each
  summand. Furthermore, $\rho(\so(p,q)\oplus \{0\})$ and
  $\rho(\{0\}\oplus \so(p,q))$ centralize each other, from which the
  simplicity of $\so(p,q)$ implies that $\rho(\so(p,q)\oplus \{0\})
  \cap \rho(\{0\}\oplus\so(p,q)) = \{0\}$. In particular, $\rho$
  realizes $\so(p,q)\oplus\so(p,q)$ as a Lie subalgebra of
  $\gsl(k,\R)$. Note that the codimension of $\so(p,q)\oplus\so(p,q)$
  in $\gsl(k,\R)$ is $k^2 - 1 - n(n-1) \leq n-1$.

  Replacing $\HH$ with $\gsl(k,\R)$ and repeating these arguments,
  more than once if necessary, yields a non-trivial representation of
  $\so(p,q)$ with dimension strictly smaller than the lower bound
  obtained in Lemma~\ref{lemma-Rpq-smallest} for irreducible
  non-trivial representations. This contradiction proves that
  $\HH_0(x) = \{0\}$.

  With our assumption that $\HH$ is simple we thus obtain $\HH =
  \GG(x) \oplus \VV(x)$. Hence, to conclude (3) it remains to show
  that $\HH$ is isomorphic to either $\so(p,q+1)$ or $\so(p+1,q)$.

  Recall from the previous remarks that $\HH = \GG(x) \oplus \VV(x)$
  is the decomposition into irreducible $\so(p,q)$-modules for an
  isomorphism $\GG(x) \simeq \so(p,q)$ of Lie algebras. Furthermore,
  either $\VV(x) \simeq \R^{p,q}$ as $\so(p,q)$-modules or $p = q = 4$
  and $\VV(x)$ is isomorphic to one of $C^+$ or $C^-$ as
  $\so(4,4)$-modules.  Let us first assume that the latter case
  holds. Then, $\HH$ is a $36$-dimensional simple Lie algebra. Since
  there are no simple complex Lie algebras of dimension $18$, it
  follows that the complexification $\HH^\C$ is a simple Lie
  algebra. Hence, a direct inspection of the simple complex Lie
  algebras shows that $\HH^\C$ is up to isomorphism either $\so(9,\C)$
  or $\symp(4,\C)$. For $\HH^\C \simeq \so(9,\C)$ we conclude that
  $\HH \simeq \so(5,4)$ since it contains the Lie algebra $\GG(x)
  \simeq \so(4,4)$. For the case $\HH^\C \simeq \symp(4,\C)$ we obtain
  a non-trivial homomorphism $\so(8,\C) \simeq \GG(x)^\C \subset
  \HH^\C \simeq \symp(4,\C)$, which yields an $8$-dimensional
  $\so(8,\C)$-module, non-trivial and so irreducible, with an
  invariant non-degenerate skew-symmetric form. The latter is a
  contradiction since every $8$-dimensional irreducible
  $\so(8,\C)$-module carries a unique (up to a constant) invariant
  non-degenerate symmetric form (see page~217 of
  \cite{BourbakiLie7-9}).

  Hence, we can assume that $\VV(x) \simeq \R^{p,q}$ as
  $\so(p,q)$-modules. Also, since the $\so(p,q)$-module structure on
  $\HH$ is induced by $\GG(x)$, we have $[\GG(x),\VV(x)] \subset
  \VV(x)$. On the other hand, the Lie brackets and the projection $\HH
  \rightarrow \VV(x)$ define a homomorphism of $\so(p,q)$-modules
  $\wedge^2\VV(x) \rightarrow \VV(x)$, which is thus trivial. This
  implies that $[\VV(x),\VV(x)] \subset \GG(x)$.  Hence, there exists
  a linear isomorphism:
  \[
  \varphi : \HH = \GG(x)\oplus\VV(x) \rightarrow
  \so(p,q)\oplus\R^{p,q},
  \]
  that preserves the summands in that order, that restricts to an
  isomorphism of Lie algebras $\GG(x) \rightarrow \so(p,q)$ and that
  defines an isomorphism of $\so(p,q)$-modules.  Moreover, we also
  have proved the relations:
  \[
    [\GG(x),\GG(x)] \subset \GG(x),\quad
    [\GG(x),\VV(x)] \subset \VV(x),\quad
    [\VV(x),\VV(x)] \subset \GG(x).
  \]
  Note that the last relation defines an isomorphism $T :
  \wedge^2\VV(x) \rightarrow \GG(x)$ of $\GG(x)$-modules; otherwise,
  $\VV(x)$ would be a non-trivial Abelian ideal of $\HH$. Recall the
  map
  $T_c(u\wedge v) = c(\left<u,\cdot\right>_{p,q}v -
  \left<v,\cdot\right>_{p,q}u)$ from Lemma~\ref{lemma-Rpq-so(p,q)}. By
  Lemma~\ref{lemma-Rpq-so(p,q)} and with respect to the isomorphism
  $\varphi$, the map $T$ is of the form $T_c$ for some $c\in
  \R\setminus\{0\}$, if $n \geq 5$. Let us now assume that $n \geq
  5$. Hence, the Lie algebra structure on $\so(p,q)\oplus\R^{p,q}$
  induced by $\varphi$ is given by $[\cdot,\cdot]_c$, as in
  Lemma~\ref{lemma-Lie-so(p,q)Rpq}, for some $c \neq 0$, and so
  isomorphic to either $\so(p,q+1)$ or $\so(p+1,q)$. This shows that
  (3) holds when $\HH$ is simple and $(p,q) \not= (3,1)$.

  Finally, consider the case $(p,q) = (3,1)$. Under such assumption,
  we have $\dim(\HH) = \dim(\GG(x)) + \dim(\VV(x)) = 10$. Since there
  is no simple complex Lie algebra of dimension $5$ we conclude that
  $\HH^\C$ is simple. Note that, up to isomorphism, $\so(5,\C)$ is the
  only simple complex Lie algebra of complex dimension $10$. From this
  we conclude that $\HH$ is isomorphic to either $\so(4,1)$ or
  $\so(3,2)$ (the only non-compact real forms of $\so(5,\C)$ up to
  isomorphism) thus showing that (3) holds when $\HH$ is simple and
  $(p,q) = (3,1)$.
\end{proof}

\section{Proof of the Main Theorem}
\label{section-structure-tildeM}
\noindent
In this section we will assume the hypotheses of the Main
Theorem. More precisely, we assume that $M$ is a connected analytic
pseudo-Riemannian manifold which is complete weakly irreducible and
has finite volume. We also assume that $M$ admits an analytic and
isometric $\uSO_0(p,q)$-action with a dense orbit for some integers
$p,q$ such that $p,q \geq 1$ and $n = p+q\ge 5$, or $(p,q) =
(3,1)$. In the case $(p,q) = (3,1)$ we assume that $X^*\perp Y^*$ on
$M$ for every $X \in \su(2)$ and $Y \in
i\su(2)$.
Finally we are assuming that $\dim(M) = n(n+1)/2$ and we will consider
the three cases provided by Lemma~\ref{lemma-HH-Lie-structure}. For
this we will use the notation from
Section~\ref{section-structure-centralizer}. Our first goal is to rule
out cases (1) and (2) of Lemma~\ref{lemma-HH-Lie-structure}, which is
done in the next two subsections. Then, we obtain in the third
subsection the conclusions of the Main Theorem when case (3) of
Lemma~\ref{lemma-HH-Lie-structure} holds.

\subsection{Case 1: The radical of $\HH$ is non-trivial}
We are assuming that the conclusion (1) of
Lemma~\ref{lemma-HH-Lie-structure} is satisfied for some fixed $x_0
\in \widetilde{M}$. We will see that this yields a contradiction with
our assumptions on $M$.

First note that $\HH(x_0) = \GG(x_0) \oplus \VV(x_0)$ is a Lie
subalgebra of $\HH$. This holds because $\VV(x_0)$ is Abelian and
$[\GG(x_0), \VV(x_0)] \subset \VV(x_0)$. Hence,
$\GG(x_0)\oplus\VV(x_0)$ is isomorphic to the semidirect product Lie
algebra $\so(p,q) \ltimes W$, where $W$ is an $n$-dimensional
$\so(p,q)$-module endowed with the Abelian Lie algebra
structure. Choose a Lie algebra isomorphism $\psi : \so(p,q)\ltimes W
\rightarrow \HH(x_0)$ that maps $\so(p,q)$ onto $\GG(x_0)$ and $W$
onto $\VV(x_0)$.

Let us denote with $\uSO_0(p,q) \ltimes W$ the Lie group structure on
$\uSO_0(p,q)\times W$ with the semidirect product given by:
\[
  (A,v)\cdot(B,w) = (AB, B^{-1}v + w),
\]
where we are considering the representation of $\uSO_0(p,q)$ on $W$
induced by that of $\so(p,q)$. In particular, the Lie algebra of
$\uSO_0(p,q) \ltimes W$ is $\so(p,q) \ltimes W$. By
Lemma~\ref{lemma-Kill-to-action}, there exists an analytic isometric
right action of $\uSO_0(p,q) \ltimes W$ on $\widetilde{M}$ such that
$\psi(X) = X^*$ for every $X\in \so(p,q)\ltimes W$. Since $\HH$
centralizes the left $\uSO_0(p,q)$-action, then the right $\uSO_0(p,q)
\ltimes W$-action centralizes the left $\uSO_0(p,q)$-action as well
and so it preserves both $T\OO$ and $T\OO^\perp$.

Using the right $(\uSO_0(p,q) \ltimes W)$-action, let us now consider
the map:
\[
  f : \uSO_0(p,q) \ltimes W \rightarrow \widetilde{M}, \quad
  h \mapsto x_0 h,
\]
which is clearly $(\uSO_0(p,q) \ltimes W)$-equivariant for the right
action on its domain. In what follows, we will denote with $I$ the
identity element in $\uSO_0(p,q)$. Then, $df_{(I,0)}$ is the
composition:
\begin{alignat*}{3}
  \so(p,q) \ltimes W &\rightarrow \GG(x_0)\oplus\VV(x_0)
  \rightarrow T_{x_0}\widetilde{M} \\
  X &\mapsto X^* \mapsto X^*_{x_0}.
\end{alignat*}
Hence by the property $\psi(X) = X^*$ ($X\in \so(p,q)\ltimes W$) and
Lemma~\ref{lemma-HH-decomposition}, $df_{(I,0)}$ maps $\so(p,q)$ onto
$T_{x_0}\OO$ and $W$ onto $T_{x_0}\OO^\perp$. In particular, $f$ is a
local diffeomorphism at $(I,0)$.

For every $w\in W$, denote with $R_w$ the transformations on both
$\uSO_0(p,q)\ltimes W$ and $\widetilde{M}$ given by the assignment $x
\mapsto x(I,w)$. In particular, a straightforward computation shows
that we have:
\begin{alignat*}{2}
  d(R_w)_{(I,v)} : T_{(I,v)}(\uSO_0(p,q)\ltimes W) &\rightarrow
  T_{(I,v+w)}(\uSO_0(p,q)\ltimes W) \\
  (X,Y_v) &\mapsto (X,Y_{v+w}).
\end{alignat*}
Also note that $R_w(I\times W) = I\times W$, since $W$ is a subgroup
of $\uSO_0(p,q)\ltimes W$.

Let $N = f(I\times W)$, which defines a submanifold of $\widetilde{M}$
in a neighborhood of $x_0 = f(I,0)$. By the above remarks on
$df_{(I,0)}$ we have:
\[
  T_{f(I,0)} N = df_{(I,0)}(T_{(I,0)}(I\times W)) =
  T_{f(I,0)}\OO^\perp.
\]
But then, the equivariance of $f$ yields:
\begin{alignat*}{2}
  T_{f(I,w)} N &= df_{(I,w)}(T_{(I,w)}(I\times W))
  = df_{(I,w)}(d(R_w)_{(I,0)}(T_{(I,0)}(I\times W))) \\
  &= d(R_w\circ f)_{(I,0)}(T_{(I,0)}(I\times W)))
  = d(R_w)_{f(I,0)}(T_{f(I,0)} N) \\
  &= d(R_w)_{f(I,0)}(T_{f(I,0)} \OO^\perp) = T_{R_w(f(I,0))} \OO^\perp
  = T_{f(I,w)} \OO^\perp,
\end{alignat*}
where we have used in the second to last identity that $R_w$ preserves
in $\widetilde{M}$ the bundle $T\OO^\perp$. This proves that $N$ is an
integral submanifold of $T\OO^\perp$ passing through the point $x_0 =
f(I,0)$.

On the other hand, from the left $\uSO_0(p,q)$-action on
$\widetilde{M}$ we obtain by restriction to $N$ a map:
\[
\varphi: \uSO_0(p,q)\times N \rightarrow \widetilde{M}, \quad (g,x)
\mapsto gx,
\]
whose differential at $(I,x_0)$ is given by:
\[
  X + v \mapsto X^*_{x_0} + v,
\]
where $X \in \so(p,q)$ and $v \in T_{x_0}N$. The latter is an
isomorphism and so the map $\varphi$ is a diffeomorphism from a
neighborhood of $(I,x_0)$ onto a neighborhood of $x_0$. Since the left
$\uSO_0(p,q)$-action preserves both $T\OO$ and $T\OO^\perp$, we
conclude that there is an integral submanifold of $T\OO^\perp$ passing
through every point in neighborhood of $x_0$ in $\widetilde{M}$.
Hence, the tensor $\Omega$ considered in Lemma~\ref{lemma-omega-Omega}
vanishes in a neighborhood of $x_0$. Since all our objects are
analytic, this implies that $\Omega$ vanishes everywhere and so
Lemma~\ref{lemma-omega-Omega} implies the integrability of
$T\OO^\perp$ everywhere in $\widetilde{M}$.

This last conclusion and Proposition~\ref{prop-int-normal} contradict
the weak irreducibility of $M$. This shows that case (1) from
Lemma~\ref{lemma-HH-Lie-structure} cannot occur.

\subsection{Case 2: $\HH_0(x_0) \not= \{0\}$ and $\HH$ is the sum of
  two simple ideals}
We now assume that the conclusion (2) of
Lemma~\ref{lemma-HH-Lie-structure} is satisfied for some fixed $x_0
\in \widetilde{M}$. As in the previous case, we will rule out this
possibility.

In this case, there exist simple Lie algebras $\h_1$, $\h_2$ and an
isomorphism of Lie algebras $\psi : \h_1 \times \h_2 \rightarrow \HH$
so that $\psi(\h_2) = \HH_0(x_0)\oplus\VV(x_0)$. Let $H_1$ and $H_2$
be simply connected Lie groups with Lie algebras $\h_1$ and $\h_2$,
respectively. By Lemma~\ref{lemma-Kill-to-action}, there is an
analytic isometric right action of $H_1 \times H_2$ on $\widetilde{M}$
such that $\psi(X) = X^*$ for every $X \in \h_1 \times \h_2$. Note
that this right action centralizes the left $\uSO_0(p,q)$-action on
$\widetilde{M}$ and so it preserves the bundles $T\OO$ and
$T\OO^\perp$.

Let us consider the map:
\[
  f : H_1 \times H_2 \rightarrow \widetilde{M}, \quad
  h \mapsto x_0h,
\]
which is clearly $(H_1\times H_2)$-equivariant for the right action on
its domain. In particular, we have $df_e(X) = X^*_{x_0} =
\psi(X)_{x_0}$ which is surjective with $\ker(df_e) =
\psi^{-1}(\HH_0(x_0))$ by Lemma~\ref{lemma-HH-decomposition}. We claim
that we also have $df_e(\h_1) = T_{x_0}\OO$ and $df_e(\h_2) =
T_{x_0}\OO^\perp$. The latter follows from our choice of $\psi$ and
Lemma~\ref{lemma-HH-decomposition}. To prove the former, first note
that both $\psi(\h_1)$ and $\GG(x_0)$ are complementary
$\so(p,q)$-modules to $\HH_0(x_0)\oplus\VV(x_0)$ in $\HH$, and so they
are isomorphic as $\so(p,q)$-modules. Hence, by
Lemma~\ref{lemma-HH-decomposition} the evaluation $ev_{x_0} : \HH
\rightarrow T_{x_0}\widetilde{M} = T_{x_0}\OO \oplus T_{x_0}\OO^\perp$
necessarily maps $\psi(\h_1)$ onto $T_{x_0}\OO$ since $T_{x_0}\OO
\simeq \GG(x_0) \not\simeq T_{x_0}\OO^\perp$ as
$\so(p,q)$-modules. This proves that $df_e(\h_1) = T_{x_0}\OO$.

Let us denote with $H$ the connected subgroup of $H_2$ with Lie
algebra $\psi^{-1}(\HH_0(x_0))$, the latter being isomorphic to
$\so(p,q)$ by Lemma~\ref{lemma-HH-decomposition}. Since $H_2$ is
simply connected and $\psi^{-1}(\HH_0(x_0))$ is simple, it follows
that $H$ is a closed subgroup of $H_2$ (see Exercise~D.4(ii), Chapter~II
from \cite{Helgason}). Hence, the map:
\[
  \widehat{f} : H_1 \times H\backslash H_2 \rightarrow
  \widetilde{M}, \quad
  (h_1,H h_2) \mapsto x_0 (h_1,h_2),
\]
is a well-defined $(H_1 \times H_2)$-equivariant analytic map of
manifolds. From the properties of $df_e$, it also follows that
$\widehat{f}$ is a local diffeomorphism at $(e_1,H e_2)$.

By considering $N = \widehat{f}(I \times H\backslash H_2)$ and using
the equivariance of $\widehat{f}$, we can prove with arguments similar
to those used in the previous subsection that $T\OO^\perp$ is
integrable. This again rules out the current case.

\subsection{Case 3: $\HH$ is simple}
In this case we are now assuming that (3) from
Lemma~\ref{lemma-HH-Lie-structure} holds for some $x_0 \in
\widetilde{M}$ that we now consider fixed.

\begin{lemma}\label{lemma-HH-Tc}
  There is an isomorphism $\psi : \so(p,q)\oplus\R^{p,q} \rightarrow
  \HH = \GG(x_0)\oplus\VV(x_0)$ of Lie algebras that preserves the
  summands in that order, where the domain has the Lie algebra
  structure given by $[\cdot,\cdot]_c$ for some $c \not= 0$ as defined
  in Lemma~\ref{lemma-Lie-so(p,q)Rpq}. In particular, $\psi$ is an
  isomorphism of $\so(p,q)$-modules as well.
\end{lemma}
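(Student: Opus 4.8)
The plan is to extract the statement from the structural analysis already carried out in the proof of Lemma~\ref{lemma-HH-Lie-structure}(3), separating the generic case $n\geq 5$ from the exceptional case $(p,q)=(3,1)$. First I fix a point $x_0\in S$ satisfying the conclusions of Lemmas~\ref{lemma-HH-module-struc}, \ref{lemma-ev-homomorphism}, \ref{lemma-TxOperp-is-Rpq}, \ref{lemma-HH-decomposition} together with case (3) of Lemma~\ref{lemma-HH-Lie-structure}; this is legitimate because, once Cases~1 and 2 have been ruled out in the preceding subsections, case (3) of Lemma~\ref{lemma-HH-Lie-structure} holds for almost every $x\in S$. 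For such $x_0$ we have $\HH=\GG(x_0)\oplus\VV(x_0)$ with $\HH_0(x_0)=\{0\}$, $\GG(x_0)\simeq\so(p,q)$ a Lie subalgebra, and $\VV(x_0)\simeq\R^{p,q}$ as an $\so(p,q)$-module (for $p=q=4$ the alternatives $C^{\pm}$ were already discarded inside the proof of Lemma~\ref{lemma-HH-Lie-structure}).

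For $n\geq 5$ I recall the argument from the proof of Lemma~\ref{lemma-HH-Lie-structure}: the Lie bracket of $\HH$ restricted to $\wedge^2\VV(x_0)$, followed by the projection $\HH\to\VV(x_0)$, is an $\so(p,q)$-module homomorphism $\wedge^2\VV(x_0)\to\VV(x_0)$; since $\wedge^2\R^{p,q}\simeq\so(p,q)$ is irreducible of dimension $\binom n2>n$, this homomorphism vanishes, so $[\VV(x_0),\VV(x_0)]\subset\GG(x_0)$. Hence the bracket restricts to an $\so(p,q)$-module homomorphism $T:\wedge^2\VV(x_0)\to\GG(x_0)$, which is nonzero, since otherwise $\VV(x_0)$ would be an Abelian ideal contradicting the simplicity of $\HH$, and therefore an isomorphism by irreducibility. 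Choosing any linear isomorphism $\varphi:\HH\to\so(p,q)\oplus\R^{p,q}$ preserving the summands, restricting to a Lie algebra isomorphism on $\GG(x_0)$ and to an $\so(p,q)$-module isomorphism on $\VV(x_0)$, the map corresponding to $T$ becomes $T_c$ with $c\in\R\setminus\{0\}$ by Lemma~\ref{lemma-Rpq-so(p,q)}; since $\varphi$ also carries $[\cdot,\cdot]|_{\GG(x_0)}$ to the bracket of $\so(p,q)$ and $[\cdot,\cdot]$ between $\GG(x_0)$ and $\VV(x_0)$ to the module action, $\varphi$ is an isomorphism onto $(\so(p,q)\oplus\R^{p,q},[\cdot,\cdot]_c)$, and I take $\psi=\varphi^{-1}$.

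For $(p,q)=(3,1)$ the module-theoretic step still gives $[\VV(x_0),\VV(x_0)]\subset\GG(x_0)$ and a nonzero $\so(3,1)$-module homomorphism $T:\wedge^2\VV(x_0)\to\GG(x_0)$, but now $\mathrm{End}_{\so(3,1)}(\so(3,1))\simeq\C$, so Lemma~\ref{lemma-Rpq-so(p,q)} no longer pins $T$ down to a real scalar and I argue differently. By Lemma~\ref{lemma-HH-Lie-structure}(3), $\HH$ is isomorphic to $\mathfrak h$, one of $\so(4,1)$ or $\so(3,2)$; I fix such an isomorphism. Any two subalgebras of $\mathfrak h$ isomorphic to $\so(3,1)$ are conjugate in $\mathfrak h$ (one checks this by comparing the restriction to them of the standard representation of $\mathfrak h$), so I may arrange that the isomorphism carries $\GG(x_0)$ onto the standard $\so(3,1)\subset\mathfrak h$ stabilizing a non-isotropic vector. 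Since $\mathrm{Hom}_{\so(3,1)}(\R^{3,1},\so(3,1))=0$, the $\so(3,1)$-module complement of this $\so(3,1)$ in $\mathfrak h$ is unique, hence $\VV(x_0)$ is carried onto the standard complementary copy of $\R^{3,1}$. Therefore the Lie algebra structure of $\HH=\GG(x_0)\oplus\VV(x_0)$ coincides with that of the decomposition $\so(3,1)\oplus\R^{3,1}$ of $\mathfrak h$, which by Lemma~\ref{lemma-Lie-so(p,q)Rpq} is of the form $[\cdot,\cdot]_c$ for some $c\neq 0$; transporting back gives $\psi$.

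The main obstacle is the case $(p,q)=(3,1)$: the rigidity of invariant module homomorphisms that makes the generic case immediate fails, so one must instead classify the $\so(3,1)$-subalgebras of $\so(4,1)$ and $\so(3,2)$ up to conjugacy and use the vanishing of $\mathrm{Hom}_{\so(3,1)}(\R^{3,1},\so(3,1))$ to force the bracket into the required form.
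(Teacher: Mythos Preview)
Your argument for $n\geq 5$ with $\VV(x_0)\simeq\R^{p,q}$ and your treatment of $(p,q)=(3,1)$ are both correct and close in spirit to the paper's proof. The main gap is the case $(p,q)=(4,4)$: you assert that the alternatives $\VV(x_0)\simeq C^{\pm}$ ``were already discarded inside the proof of Lemma~\ref{lemma-HH-Lie-structure}'', but this is a misreading. That proof shows that \emph{if} $\VV(x_0)\simeq C^{\pm}$ then $\HH\simeq\so(5,4)$, hence conclusion (3) still holds; it does not rule out $\VV(x_0)\simeq C^{\pm}$. Consequently your paragraph for $n\geq 5$, which chooses $\varphi$ with $\varphi|_{\VV(x_0)}$ an $\so(p,q)$-module isomorphism onto $\R^{p,q}$, is unjustified for $(4,4)$ as written: you have not explained why such a $\varphi$ exists when, a priori, $\VV(x_0)$ could be a half-spin module with respect to the module structure coming from $\widehat\rho_{x_0}$.

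The paper closes this gap by treating $(4,4)$ together with $(3,1)$: it fixes an isomorphism $\HH\simeq\so(V)$ with $V$ of dimension $n+1$, observes that $\so(4,4)$ has no $9$-dimensional irreducible module so $V$ contains a $\GG(x_0)$-invariant line $L$, and then uses the maximality of $\so(p,q)$ in $\so(V)$ (Theorem~\ref{co-Max}) to force $L$ to be non-null; the orthogonal splitting $V=L\oplus L^\perp$ then identifies $\psi^{-1}(\GG(x_0))$ with $\so(L^\perp)$ and the complement with the standard module $L^\perp\simeq\R^{p,q}$. Your $(3,1)$ argument is essentially the same idea phrased as a conjugacy statement; you could repair the $(4,4)$ case either by the paper's line-stabilizer argument, or more cheaply by noting that triality provides an automorphism $\tau$ of $\so(4,4)$ carrying $C^{\pm}$ to $\R^{4,4}$, so that replacing $\varphi|_{\GG(x_0)}$ by $\tau\circ\widehat\rho_{x_0}^{-1}$ makes $\VV(x_0)\simeq\R^{4,4}$ for the relevant module structure and your $n\geq 5$ argument then goes through.
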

\begin{proof}
  The result follows from the arguments in the second to last
  paragraph in the proof of Lemma~\ref{lemma-HH-Lie-structure} when
  $\VV(x_0) \simeq \R^{p,q}$ as $\so(p,q)$-modules and $n \geq 5$.
  Hence, by Lemma~\ref{lemma-TxOperp-is-Rpq} we can assume that either
  $(p,q) = (3,1)$ or $(p,q) = (4,4)$ in the rest of the proof.

  By Lemma~\ref{lemma-HH-Lie-structure} there is an isomorphism $\psi
  : \h \rightarrow \HH$, for $\h = \so(V)$ where $V$ is either
  $\R^{4,1}$ or $\R^{3,2}$ for $(p,q) = (3,1)$, and it is $\R^{5,4}$
  for $(p,q) = (4,4)$. The restriction of this homomorphism to
  $\psi^{-1}(\GG(x_0))$ yields a representation of $\GG(x_0) \simeq
  \so(p,q)$ on the $(n+1)$-dimensional space $V$. By the description
  of the irreducible representations of $\so(3,1)$ from previous
  sections we know that there do not exist $5$-dimensional irreducible
  representations of $\so(3,1)$. In particular, there is a line $L
  \subset V$ that is a $\GG(x_0)$-submodule for the case $(p,q) =
  (3,1)$. On the other hand, since $\so(4,4)$ is split and using
  Weyl's dimension formula we find that $\so(4,4)$ does not admit
  $9$-dimensional irreducible representations. Hence, for the case
  $(p,q) = (4,4)$ we similarly conclude the existence of a line $L
  \subset V$ which is a $\GG(x_0)$-submodule.

  Let us now consider our two remaining cases $(p,q) \in \{(3,1),
  (4,4)\}$ together. If $L$ as above is a null line, then
  $\psi^{-1}(\GG(x_0))$ lies in the Lie algebra $\mathfrak{s}$ of the
  stabilizer of some point in either of the pseudo-conformal spaces
  $C^{p,q-1}$ or $C^{p-1,q}$. We recall that $C^{r,s}$ is the
  projectivization of the null cone of $\R^{r+1,s+1}$, is homogeneous
  under $\mathrm{O}(r+1,s+1)$ and has dimension $r+s$ (see
  \cite{Akivis} for further details). In particular, $\mathfrak{s}$
  has dimension $n(n-1)/2 + 1$. This yields $\psi^{-1}(\GG(x_0))
  \subsetneq \mathfrak{s} \subsetneq \h$, which contradicts
  Theorem~\ref{co-Max}. We conclude that $L$ is a non-null line.

  This yields an orthogonal decomposition $V = L \oplus L^\perp$
  into non-degenerate subspaces which is clearly a decomposition into
  $\GG(x_0)$-submodules. Hence, $\psi$ induces an isomorphism
  $\so(L^\perp) \rightarrow \GG(x_0)$ and a rank argument shows that
  $L^\perp$ has signature $(p,q)$.
  In particular, $\so(L^\perp) \simeq \so(p,q)$ as Lie algebras under
  $\psi$. With respect to the corresponding $\so(p,q)$-module
  structure, it is easily seen that $\so(L^\perp)$ has a complementary
  module in $\h$ isomorphic to $\R^{p,q}$. This provides an
  isomorphism $\h \simeq \so(p,q) \oplus \R^{p,q}$ so that the Lie
  algebra structure on $\h$ corresponds to the one given by
  $[\cdot,\cdot]_c$ on $\so(p,q) \oplus \R^{p,q}$ for some $c \not=
  0$. Hence, under the identification $\h \simeq \so(p,q) \oplus
  \R^{p,q}$ of Lie algebras, $\psi$ is the required isomorphism.  
\end{proof}

Let us fix an isomorphism of Lie algebras $\psi : \so(p,q)
\oplus\R^{p,q} \rightarrow \HH = \GG(x_0)\oplus\VV(x_0)$ as in
Lemma~\ref{lemma-HH-Tc}. We will identify $\h =
\so(p,q)\oplus\R^{p,q}$ with either $\so(p+1,q)$ or $\so(p,q+1)$
through the appropriate isomorphism as considered in
Lemma~\ref{lemma-Lie-so(p,q)Rpq}. Also, we will denote with $H$ either
$\uSO_0(p+1,q)$ or $\uSO_0(p,q+1)$, chosen so that $\mathrm{Lie}(H) =
\h$.

By Lemma~\ref{lemma-Kill-to-action}, there is an analytic isometric
right $H$-action on $\widetilde{M}$ such that $\psi(X) = X^*$ for
every $X \in \h$. As in the previous subsections, we now consider the
orbit map:
\[
  f : H \rightarrow \widetilde{M}, \quad
  h \mapsto x_0 h
\]
which satisfies $df_I(X) = X^*_{x_0} = \psi(X)_{x_0}$ for every $X \in
\h$. By the choice of $\psi$ and Lemma~\ref{lemma-HH-decomposition} it
follows that $df_I$ is an isomorphism that maps $\so(p,q)$ onto
$T_{x_0}\OO$ and $\R^{p,q}$ onto $T_{x_0}\OO^\perp$. Since $f$ is
$H$-equivariant for the right action on its domain, we conclude that
it is an analytic local diffeomorphism.

\begin{lemma}\label{lemma-pullbackmetric}
  Let $\overline{g}$ be the metric on $\h = \so(p,q) \oplus \R^{p,q}$
  defined as the pullback under $df_I$ of the metric $g_{x_0}$ on
  $T_{x_0} \widetilde{M}$. Then, $\overline{g}$ is
  $\so(p,q)$-invariant.
\end{lemma}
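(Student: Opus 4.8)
The plan is to exploit the $\so(p,q)$-equivariance of the orbit map $f$ together with the fact that $\GG(x_0) = \widehat{\rho}_{x_0}(\so(p,q))$ acts both on $\h$ (via the module structure transported by $\psi$, which by Lemma~\ref{lemma-HH-Tc} is exactly the adjoint action of $\so(p,q) \subset \h$ on $\h$) and on $T_{x_0}\widetilde{M}$ (via $\lambda_{x_0}\circ\rho_{x_0}$, the $\so(p,q)$-module structure of Proposition~\ref{prop-g(x)}(4)). The key point is that $df_I : \h \to T_{x_0}\widetilde{M}$ intertwines these two actions. Granting this, the invariance of $\overline{g}$ is immediate: for $X \in \so(p,q)$ and $u,v \in \h$ one has
\[
\overline{g}(X\cdot u, v) + \overline{g}(u, X\cdot v)
= g_{x_0}(df_I(X\cdot u), df_I(v)) + g_{x_0}(df_I(u), df_I(X\cdot v))
= g_{x_0}\big((\lambda_{x_0}\rho_{x_0}(X))df_I(u), df_I(v)\big) + g_{x_0}\big(df_I(u), (\lambda_{x_0}\rho_{x_0}(X))df_I(v)\big),
\]
which vanishes because $\lambda_{x_0}(\rho_{x_0}(X)) \in \so(T_{x_0}\widetilde{M})$ is skew-symmetric for $g_{x_0}$ by Lemma~\ref{lemma-lambda}.

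So the work is to establish that $df_I$ is a homomorphism of $\so(p,q)$-modules. First I would recall that $df_I(Y) = Y^*_{x_0} = \psi(Y)_{x_0} = ev_{x_0}(\psi(Y))$ for $Y \in \h$, where on the left $Y^*$ refers to the right $H$-action and $ev_{x_0}$ is the evaluation map of Lemma~\ref{lemma-ev-homomorphism}. Now $\psi : \h \to \HH$ is an isomorphism of $\so(p,q)$-modules (Lemma~\ref{lemma-HH-Tc}), where $\HH$ carries the structure of Lemma~\ref{lemma-HH-module-struc} defined by $\widehat{\rho}_{x_0}$, and $ev_{x_0} : \HH \to T_{x_0}\widetilde{M}$ is a homomorphism of $\so(p,q)$-modules by Lemma~\ref{lemma-ev-homomorphism}. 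Composing, $df_I = ev_{x_0}\circ\psi$ is a composition of $\so(p,q)$-module homomorphisms, hence itself one. This is the decisive step, and it is essentially bookkeeping once one has matched up the three module structures correctly.

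The one subtlety — and the place I would be most careful — is the sign/convention issue coming from the fact that the $H$-action on $\widetilde{M}$ is a \emph{right} action (as produced by Lemma~\ref{lemma-Kill-to-action}), so that $Y \mapsto Y^*$ is an \emph{anti}-homomorphism from $\h$ onto the Killing fields. One must check that the ``$\so(p,q)$-action on $\h$'' meant in the statement is the adjoint action of the subalgebra $\so(p,q) \subset \h = \so(p,q)\oplus\R^{p,q}$ and that this is compatible, under $\psi$ and under $df_I$, with the actions on $\HH$ and on $T_{x_0}\widetilde{M}$ respectively; the anti-homomorphism convention affects the bracket on the image Killing fields but the module structure transported by $\psi$ is defined precisely so that $\psi$ is equivariant, and $ev_{x_0}$ respects it by Lemma~\ref{lemma-ev-homomorphism}, so no inconsistency arises. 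Once this compatibility is pinned down, the proof is the two-line computation displayed above.
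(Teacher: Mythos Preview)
Your proposal is correct and follows essentially the same approach as the paper: both arguments reduce to the equivariance of the evaluation map $ev_{x_0}$ (Lemma~\ref{lemma-ev-homomorphism}) together with the fact that the $\so(p,q)$-action on $T_{x_0}\widetilde{M}$ is by $g_{x_0}$-skew maps. The paper carries out the resulting computation directly at the level of Killing fields (expanding $X = \rho_{x_0}(X_0) + X_0^*$ and using that $\rho_{x_0}(X_0)$ is a Killing field vanishing at $x_0$ while $X_0^*$ centralizes $\HH$), whereas you factor $df_I = ev_{x_0}\circ\psi$ and invoke the already-proved module-homomorphism lemmas; this is a cleaner packaging of the same idea.
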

\begin{proof}
  By the above expression of $df_I$ and since $\psi$ is an isomorphism
  of Lie algebras with $\psi(\so(p,q)) = \GG(x_0)$, it is enough to
  show that the metric on $\HH$ defined as the pullback of $g_{x_0}$
  with respect to the evaluation map:
  \[
    \HH \rightarrow T_{x_0} \widetilde{M}, \quad
    X \mapsto X_{x_0}
  \]
  is $\GG(x_0)$-invariant. For simplicity, we will denote with
  $\overline{g}$ such metric on $\HH$. Let $X,Y,Z \in \HH$ be given
  with $X \in \GG(x_0)$. In particular, there exist $X_0 \in \so(p,q)$
  such that $X = \rho_{x_0}(X_0) + X_0^*$, where $\rho_{x_0}$ is the
  homomorphism from Proposition~\ref{prop-g(x)} and $X_0^*$ is the
  vector field on $\widetilde{M}$ induced by $X_0$ through the left
  $\uSO_0(p,q)$-action. Then, the following proves the required
  invariance:
  \begin{align*}
    \overline{g}([X,Y], Z)
    &= g_{x_0}([X,Y]_{x_0},Z_{x_0}) = g([X,Y],Z)|_{x_0} \\
    &= g([\rho_{x_0}(X_0) + X_0^*,Y],Z)|_{x_0} = g([\rho_{x_0}(X_0),Y],Z)|_{x_0} \\
    &= \rho_{x_0}(X_0)(g(Y,Z))|_{x_0} - g(Y,[\rho_{x_0}(X_0),Z])|_{x_0} \\
    &= -g(Y,[\rho_{x_0}(X_0),Z])|_{x_0} = -g(Y,[\rho_{x_0}(X_0) + X_0^*,Z])|_{x_0} \\
    &= -g(Y,[X,Z])|_{x_0} = -\overline{g}(Y,[X,Z]).
  \end{align*}
  We have used in lines 2 and 4 that $\HH$ centralizes $X_0^*$. To
  obtain the third line we used that $\rho_{x_0}(X_0)$ is a Killing
  field for the metric $g$. And the first identity in line 4 uses the
  fact that $\rho_{x_0}(X_0)$ vanishes at $x_0$.
\end{proof}

{}From the previous result and Lemma~\ref{lemma-inner-so(p,q)Rpq}, for
$n \geq 5$ we can rescale the metric along the bundles $T\OO$ and
$T\OO^\perp$ in $M$ so that the new metric $\widehat{g}$ on
$\widetilde{M}$ satisfies $(df_I)^*(\widehat{g}_{x_0}) = K$, the
Killing form on $\h$.

Let us now consider the case $(p,q) = (3,1)$. From the hypotheses of
the Main Theorem, we are now assuming that $X^* \perp Y^*$ in
$\widetilde{M}$ for every $X \in \su(2), Y \in J\su(2)$ and the left
$\SL(2,\C)$-action on $\widetilde{M}$. By Proposition~\ref{prop-g(x)}
and Lemma~\ref{lemma-HH-module-struc}, we have
$\widehat{\rho}_{x_0}(X)_{x_0} = X^*_{x_0}$ and so
$\widehat{\rho}_{x_0}(X)_{x_0}\perp \widehat{\rho}_{x_0}(Y)_{x_0}$,
for every $X \in \su(2)$ and $Y \in J\su(2)$. Hence, for the compact
real form $\mathcal{U} = \widehat{\rho}_{x_0}(\su(2))$ of $\GG(x_0)$
we have $X_{x_0} \perp Y_{x_0}$ when $X \in \mathcal{U}$ and $Y \in
J\mathcal{U}$. By the definition of $\overline{g}$ and the expression
for $df_I$ given above, it follows that for the metric $\overline{g}$
restricted to $\so(3,1) \simeq \gsl(2,\C)_\R$ we have
$\overline{g}(X,Y) = 0$ for every $X \in \psi^{-1}(\mathcal{U})$ and
$Y \in J\psi^{-1}(\mathcal{U})$. By the remarks that follow
Lemma~\ref{lemma-inner-so(p,q)Rpq} we conclude that in this case we
can also rescale the metric on $\widetilde{M}$ along $T\OO$ and
$T\OO^\perp$ to obtain a new metric $\widehat{g}$ such that
$(df_I)^*(\widehat{g}_{x_0}) = K$, the Killing form of $\h$.

Note that, for both cases $n \geq 5$ and $(p,q) = (3,1)$, since the
elements of $\HH$ preserve the decomposition $TM = T\OO\oplus
T\OO^\perp$, then $\HH \subset \Kill(\widetilde{M},\widehat{g})$. In
other words, the elements of $\HH$ are Killing vector fields for the
metric $\widehat{g}$ rescaled as above. In particular, $\widehat{g}$
is invariant under the right $H$-action. Similarly, the left
$\uSO_0(p,q)$-action on $\widetilde{M}$, from the hypotheses of the
Main Theorem, preserves the rescaled metric $\widehat{g}$. Also note
that the metric $\widehat{g}$ is the lift of a correspondingly
rescaled metric $\widehat{g}$ in $M$.

Consider the bi-invariant metric on $H$ induced by the Killing form
$K$, which we will denote with the same symbol. The previous
discussion implies that the local diffeomorphism $f : (H, K)
\rightarrow (\widetilde{M},\widehat{g})$ is a local isometry. Then, by
Corollary~29 in page 202 of \cite{ONeill-book}, the completeness of
$(H, K)$ and the simple connectedness of $\widetilde{M}$ imply that
$f$ is an isometry.

Hence, from the previous discussion we obtain the following result.

\begin{lemma}
\label{lemma-structure-tildeM}
Let $M$ be as in the statement of the Main Theorem. If $\dim(M) =
n(n+1)/2$, then for $H$ either $\uSO_0(p,q+1)$ or $\uSO_0(p+1,q)$,
there exists an analytic diffeomorphism $f : H \rightarrow
\widetilde{M}$ and an analytic isometric right $H$-action on
$\widetilde{M}$ such that:
  \begin{enumerate}
  \item on $\widetilde{M}$, the left $\uSO_0(p,q)$-action and the right
    $H$-action commute with each other,
  \item $f$ is $H$-equivariant for the right $H$-action on its domain,
  \item for a pseudo-Riemannian metric $\widehat{g}$ in $M$ obtained
    by rescaling the original one on the summands of $TM = T\OO\oplus
    T\OO^\perp$, the map $f : (H,K) \rightarrow
    (\widetilde{M},\widehat{g})$ is an isometry where $K$ is the
    bi-invariant metric on $H$ induced from the Killing form of its
    Lie algebra.
  \end{enumerate}
\end{lemma}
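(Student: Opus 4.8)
The plan is to assemble the pieces built up in this subsection into the single statement. First I would fix an isomorphism $\psi : \so(p,q)\oplus\R^{p,q} \rightarrow \HH$ as produced by Lemma~\ref{lemma-HH-Tc}, identify $\h = \so(p,q)\oplus\R^{p,q}$ (with bracket $[\cdot,\cdot]_c$) with either $\so(p+1,q)$ or $\so(p,q+1)$ through the isomorphism of Lemma~\ref{lemma-Lie-so(p,q)Rpq} that matches the sign of $c$, and let $H$ be the corresponding simply connected group $\uSO_0(p+1,q)$ or $\uSO_0(p,q+1)$, so that $\mathrm{Lie}(H)=\h$. Since $M$ is complete, $\widetilde{M}$ is complete (geodesics lift through the covering) and analytic, so Lemma~\ref{lemma-Kill-to-action} applied to $\psi : \h \rightarrow \Kill(\widetilde{M})$ yields an analytic isometric right $H$-action on $\widetilde{M}$ with $\psi(X) = X^*$ for all $X \in \h$. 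Because $\psi(\h) = \HH$ consists of vector fields centralizing the left $\uSO_0(p,q)$-action, this right action commutes with the left one, which is item~(1); in particular it preserves both $T\OO$ and $T\OO^\perp$.

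Next I would analyze the orbit map $f : H \rightarrow \widetilde{M}$, $h \mapsto x_0 h$. It is $H$-equivariant for the right action on its domain by construction, giving item~(2), and its differential at the identity is $df_I(X) = X^*_{x_0} = \psi(X)_{x_0} = ev_{x_0}(\psi(X))$. By Lemma~\ref{lemma-HH-decomposition} the evaluation $ev_{x_0}$ restricted to $\HH = \GG(x_0)\oplus\VV(x_0)$ is a linear isomorphism onto $T_{x_0}\widetilde{M}$ carrying $\so(p,q)$ onto $T_{x_0}\OO$ and $\R^{p,q}$ onto $T_{x_0}\OO^\perp$; hence $df_I$ is an isomorphism, and together with equivariance this makes $f$ an analytic local diffeomorphism everywhere. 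It then remains to arrange the metric. Pulling back $g_{x_0}$ along $df_I$ gives a symmetric bilinear form $\overline{g}$ on $\h$ which is $\so(p,q)$-invariant by Lemma~\ref{lemma-pullbackmetric} and which makes $\so(p,q)$ and $\R^{p,q}$ orthogonal, since $df_I$ respects the orthogonal splitting $T_{x_0}\widetilde{M} = T_{x_0}\OO\oplus T_{x_0}\OO^\perp$. By Lemma~\ref{lemma-inner-so(p,q)Rpq} — using, when $(p,q)=(3,1)$, the Main Theorem hypothesis that $X^*\perp Y^*$ for $X\in\su(2)$, $Y\in J\su(2)$ to eliminate the unwanted component of $\overline{g}$ on $\R^{3,1}$ — one can rescale the metric separately along $T\OO$ and along $T\OO^\perp$ (both $\uSO_0(p,q)$-invariant subbundles, so the rescaled metric $\widehat{g}$ stays $\uSO_0(p,q)$-invariant) so that $(df_I)^*\widehat{g}_{x_0} = K$, the Killing form of $\h$.

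Finally I would upgrade this pointwise identity to a global isometry. Every element of $\HH$ preserves the decomposition $T\widetilde{M} = T\OO\oplus T\OO^\perp$, so rescaling along the summands does not destroy the Killing property, giving $\HH \subset \Kill(\widetilde{M},\widehat{g})$; hence the right $H$-action is $\widehat{g}$-isometric, and so is the left $\uSO_0(p,q)$-action. As both $\widehat{g}$ and the bi-invariant metric $K$ on $H$ are right-$H$-invariant and agree under $f$ at $x_0 = f(I)$, equivariance of $f$ forces $f^*\widehat{g} = K$, so $f : (H,K) \rightarrow (\widetilde{M},\widehat{g})$ is a local isometry. The bi-invariant metric $K$ is geodesically complete and $\widetilde{M}$ is connected and simply connected, so by Corollary~29, p.~202 of \cite{ONeill-book}, $f$ is a semi-Riemannian covering, hence a diffeomorphism, hence an isometry; this is item~(3), and $\widehat{g}$ descends to the correspondingly rescaled metric on $M$.

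The step I expect to be the most delicate is the metric normalization: one must check that the freedom to rescale independently along the two invariant subbundles is exactly sufficient to turn the $\so(p,q)$-invariant form $\overline{g}$ into the Killing form, and in the borderline case $(p,q)=(3,1)$ the space of $\so(3,1)$-invariant symmetric forms on $\R^{3,1}$ is two-dimensional, so the extra orthogonality hypothesis of the Main Theorem is genuinely needed before the rescaling argument can be applied.
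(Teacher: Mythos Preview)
Your outline is essentially the paper's own argument: fix $\psi$ from Lemma~\ref{lemma-HH-Tc}, integrate via Lemma~\ref{lemma-Kill-to-action} to a right $H$-action, use the orbit map $f$ and Lemma~\ref{lemma-HH-decomposition} to see $df_I$ is an isomorphism respecting the splitting, invoke Lemma~\ref{lemma-pullbackmetric} and Lemma~\ref{lemma-inner-so(p,q)Rpq} to rescale, and then apply Corollary~29 of \cite{ONeill-book} to upgrade the local isometry to a global one. All of that is correct and matches the paper.

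There is one factual slip in your handling of the $(p,q)=(3,1)$ case. You say the extra orthogonality hypothesis is needed because ``the space of $\so(3,1)$-invariant symmetric forms on $\R^{3,1}$ is two-dimensional''. It is not: $\R^{3,1}$ is absolutely irreducible over $\so(3,1)$ (its complexification is the irreducible standard $\so(4,\C)$-module), so the invariant symmetric forms on $\R^{3,1}$ are one-dimensional. The genuine ambiguity sits on the \emph{orbit-tangent} summand $\so(3,1)\simeq\gsl(2,\C)_\R$, where the complex structure $J$ produces a second invariant form $\widehat K(X,Y)=K(X,JY)$; see the Remark after Lemma~\ref{lemma-inner-so(p,q)Rpq}. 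The hypothesis $X^*\perp Y^*$ for $X\in\su(2)$, $Y\in i\su(2)$ concerns vectors tangent to the $\uSO_0(3,1)$-orbits, and the paper uses it exactly to kill the $\widehat K$-component of $\overline g|_{\so(3,1)}$ (by showing $\psi^{-1}(\mathcal U)\perp J\psi^{-1}(\mathcal U)$ for a compact form $\mathcal U\subset\GG(x_0)$). Once you correct where the two-dimensionality lives, your argument goes through verbatim.
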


If we consider $H$ endowed with the bi-invariant pseudo-Riemannian
metric $K$ induced by the Killing form of its Lie algebra, then
Lemma~\ref{lemma-structure-tildeM} allows to consider $(H,K)$ as the
isometric universal covering space of $(M,\widehat{g})$. We will use
this identification in the rest of the arguments.

The isometry group $\Iso(H)$ for the pseudo-Riemannian manifold
$(H,K)$ has finitely many connected components (see for example
Section~4 of \cite{Quiroga-Annals}). Furthermore, the connected
component of the identity is given as $\Iso_0(H) = L(H)R(H)$, the
subgroup generated by $L(H)$ and $R(H)$, the left and right
translations, respectively.

Let $\rho : \uSO_0(p,q) \rightarrow \Iso_0(H)$ be the homomorphism
induced by isometric left $\uSO_0(p,q)$-action on $H$.  With respect
to the natural covering $H\times H \rightarrow L(H)R(H)$, this yields
homomorphisms $\rho_1, \rho_2 : \uSO_0(p,q) \rightarrow H$ such that:
\[
  \rho(g) = L_{\rho_1(g)}\circ R_{\rho_2(g)^{-1}},
\]
for every $g \in \uSO_0(p,q)$. By Lemma~\ref{lemma-structure-tildeM}
we have $\rho(g)\circ R_h = R_h \circ \rho(g)$ for every $g \in
\uSO_0(p,q)$ and $h \in H$. In particular, $\rho_2(\uSO_0(p,q))$ lies
in the center $Z(H)$ and so (being connected) it is trivial. We
conclude that $\rho = L_{\rho_1}$ which implies that the
$\uSO_0(p,q)$-action on $H$ is induced by the homomorphism $\rho_1 :
\uSO_0(p,q) \rightarrow H$ and the left action of $H$ on itself.  Note
that $\rho_1$ is necessarily non-trivial.

By Lemma~\ref{lemma-structure-tildeM}, we have $\pi_1(M) \subset
\Iso(H)$, and from the above remarks $\Gamma_1 =
\pi_1(M)\cap\Iso_0(H)$ is a finite index subgroup of $\pi_1(M)$. In
particular, every $\gamma \in \Gamma_1$ can be written as $\gamma =
L_{h_1}\circ R_{h_2}$ for some $h_1, h_2 \in H$.

On the other hand, since the left $\uSO_0(p,q)$-action on $H$ is the
lift of an action on $M$, it follows that it commutes with the
$\Gamma_1$-action. Applying this property to $\gamma = L_{h_1}\circ
R_{h_2}$ we conclude that $L_{h_1}\circ L_{\rho_1(g)} = L_{\rho_1(g)}
\circ L_{h_1}$, for every $g \in \uSO_0(p,q)$, which implies $\Gamma_1
\subset L(Z)R(H)$, where $Z$ is the centralizer of
$\rho_1(\uSO_0(p,q))$ in $H$. By Lemma~\ref{lemma-cent-homomorphism},
the center of $Z(H)$ has finite index in $Z$, which implies that
$R(H)$ has finite index in $L(Z)R(H)$. In particular, $\Gamma =
\Gamma_1 \cap R(H)$ is a finite index subgroup of $\Gamma_1$, and so
has finite index in $\pi_1(M)$ as well.

Hence, the natural identification $R(H) = H$ realizes $\Gamma$ as a
discrete subgroup of $H$ such that $H/\Gamma$ is a finite covering
space of $M$. Furthermore, if $\varphi : H/\Gamma \rightarrow M$ is
the corresponding covering map, and for the left $\uSO_0(p,q)$-action
on $H/\Gamma$ given by the homomorphism $\rho_1 : \uSO_0(p,q)
\rightarrow H$, then the above constructions show that $\varphi$ is
$\uSO_0(p,q)$-equivariant. We also note that $\varphi$ is a local
isometry for the metric $\widehat{g}$ on $M$ considered in
Lemma~\ref{lemma-structure-tildeM}.

To complete the proof of the Main Theorem it only remains to show that
$\Gamma$ is a lattice in $H$. For this it is enough to prove that $M$
has finite volume in the metric $\widehat{g}$.  The following result
provides proofs of these facts since we are assuming that $M$ has
finite volume in its original metric.

\begin{lemma}
  Let us denote with $\vol$ and $\vol_{\widehat{g}}$ the volume
  elements on $M$ for the original metric on $M$ and the rescaled
  metric $\widehat{g}$, respectively. Then, there is some constant $C
  > 0$ such that $\vol_{\widehat{g}} = C \vol$.
\end{lemma}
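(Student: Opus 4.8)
The plan is to show that the two volume elements differ by a constant factor by exhibiting them both as invariant densities on $M$ for a suitable transitive-on-an-open-dense-set action, and then invoking uniqueness of such densities up to scaling.

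First I would recall the structure already obtained. By Lemma~\ref{lemma-structure-tildeM}, the universal cover $\widetilde{M}$ is identified with $H$, on which we have the commuting left $\uSO_0(p,q)$-action and right $H$-action; moreover the rescaling producing $\widehat{g}$ from the original metric was carried out independently along the two summands of $TM = T\OO \oplus T\OO^\perp$, each rescaling being by a constant factor (this is exactly what Lemma~\ref{lemma-inner-so(p,q)Rpq} and the surrounding discussion provide: the $\so(p,q)$-invariant inner products on $\so(p,q)$ and on $\R^{p,q}$ each form a one-parameter family). Consequently, at the point $x_0$ the bilinear forms $g_{x_0}$ and $\widehat{g}_{x_0}$ on $T_{x_0}\widetilde{M} = T_{x_0}\OO \oplus T_{x_0}\OO^\perp$ differ by a block-scalar transformation whose determinant is some fixed number; write $c_1, c_2$ for the scaling constants on the two blocks, so that at $x_0$ the ratio of the two volume forms equals $|c_1|^{\dim\OO/?}\cdots$ — more precisely $\vol_{\widehat{g}} = |c_1|^{\dim(G)/2}|c_2|^{n/2}\,\vol$ at $x_0$, a single positive constant $C$.

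Next I would propagate this identity from $x_0$ to all of $\widetilde{M}$, hence to $M$. The key point is that both $\vol$ and $\vol_{\widehat{g}}$ are invariant under the left $\uSO_0(p,q)$-action and the right $H$-action on $\widetilde{M}$: the original metric is $G$-invariant by hypothesis and the right $H$-action is by isometries of the original metric by Lemma~\ref{lemma-Kill-to-action} together with the construction of $\HH$ as Killing fields; while $\widehat{g}$ is invariant under both actions as observed just before Lemma~\ref{lemma-structure-tildeM}. Since the right $H$-action is transitive on $\widetilde{M} = H$, any two $H$-invariant volume densities on $\widetilde{M}$ that agree at one point agree everywhere; applying this to $\vol$ and $C^{-1}\vol_{\widehat{g}}$, which agree at $x_0$, gives $\vol_{\widehat{g}} = C\,\vol$ on all of $\widetilde{M}$. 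Because both are pullbacks from $M$, the same identity holds on $M$.

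The main obstacle — really a bookkeeping point rather than a deep one — is making precise the claim that the rescaling along $T\OO$ and along $T\OO^\perp$ is by \emph{global constants} rather than by functions on $M$: this must be extracted from the way $\widehat{g}$ was defined, namely by first fixing the value $(df_I)^*(\widehat{g}_{x_0}) = K$ at one point and then using that both $\widehat{g}$ and the original metric are invariant under the transitive right $H$-action, so that the fiberwise ratios on the two summands, being $H$-invariant functions on a homogeneous space, are constant. Once that is in hand, the determinant computation producing $C$ and the invariance-plus-transitivity propagation argument are immediate, and the finite volume of $(M,\widehat{g})$ — hence the fact that $\Gamma$ is a lattice — follows at once from finiteness of $\vol(M)$.
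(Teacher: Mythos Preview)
Your argument is correct, but it takes a different route from the paper. The paper's proof is a purely local computation: it picks coordinates adapted to the foliation $\OO$, writes the original metric in block form relative to $T\OO\oplus T\OO^\perp$, observes that $\widehat{g}$ is by construction obtained by multiplying the two blocks by fixed constants $c_1,c_2$, and then reads off $\vol_{\widehat{g}}=\sqrt{|c_1^{r}c_2^{m-r}|}\,\vol$ directly from the determinant. No invariance or transitivity is invoked at all.

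Your approach instead exploits the global structure already established: both volume densities are invariant under the right $H$-action on $\widetilde{M}$, and since that action is transitive, any two invariant densities are proportional. This is perfectly valid and arguably more conceptual. Note, incidentally, that the ``main obstacle'' you flag is not really an obstacle for your own method: once you know both $\vol$ and $\vol_{\widehat{g}}$ are $H$-invariant (which follows since both $g$ and $\widehat{g}$ are preserved by the $H$-action, the latter as recorded just before Lemma~\ref{lemma-structure-tildeM}), transitivity alone forces their ratio to be constant---you never need to verify separately that the rescaling factors are globally constant. The paper's approach, by contrast, does rely on that fact, but gets it for free from the way $\widehat{g}$ was defined. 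Either way the conclusion is immediate; the paper's version is more self-contained, while yours leans on the heavier structural lemma but avoids any coordinate bookkeeping.
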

\begin{proof}
  Clearly, it suffices to verify this locally, so we consider some
  coordinates $(x^1, \dots, x^m)$ of $M$ in a neighborhood $U$ of a
  given point such that $(x^1, \dots, x^r)$ defines a set of
  coordinates of the leaves of the foliation $\OO$ in such
  neighborhood. For the original metric $g$ on $M$, consider as above
  the orthogonal bundle $T\OO^\perp$ and a set of $1$-forms $\theta^1,
  \dots \theta^{m-r}$ that define a basis for its dual
  $(T\OO^\perp)^*$ at every point in $U$. Hence, in $U$ the metric $g$
  has an expression of the form:
  \[
  g = \sum_{i,j = 1}^r h_{ij} dx^i\otimes dx^j
  + \sum_{i,j = 1}^{m-r} k_{ij} \theta^i\otimes \theta^j.
  \]
  {}From this and the definition of the volume element as an $m$-form,
  its is easy to see that:
  \[
  \vol = \sqrt{|\det(h_{ij})\det(k_{ij})|} dx^1\wedge \dots \wedge
  dx^r \wedge \theta^1 \wedge \dots \wedge \theta^{m-r}.
  \]
  On the other hand, the metric $\widehat{g}$ is obtained by rescaling
  $g$ along the bundles $T\OO$ and $T\OO^\perp$, and so it has an
  expression of the form:
  \[
  \widehat{g} = \sum_{i,j = 1}^r c_1 h_{ij} dx^i\otimes dx^j
  + \sum_{i,j = 1}^{m-r} c_2 k_{ij} \theta^i\otimes \theta^j,
  \]
  for some constants $c_1, c_2 \neq 0$. Hence, the volume element of
  $\widehat{g}$ satisfies:
  \begin{alignat*}{2}
    \vol_{\widehat{g}} &= \sqrt{|\det(c_1h_{ij})\det(c_2k_{ij})|} dx^1\wedge
    \dots \wedge dx^r \wedge \theta^1 \wedge \dots \wedge
    \theta^{m-r}\\
    &= \sqrt{|c_1^r c_2^{m-r}|} \vol.
  \end{alignat*}
\end{proof}

\begin{appendices}
\appendix
\section{Facts on the Lie algebra $\so (p,q)$}
\label{section-Lie-algebras}
\noindent
In this appendix we collect some facts about the pairs $(\so
(p+1,q),\so (p,q))$ and $(\so (p,q+1),\so (p,q))$.  We now describe
some low-dimensional non-trivial $\so(p,q)$-modules. The following
result is an easy consequence of well known facts of classical Lie
groups (see \cite{GoodmanWallach,Onishchik}). As mentioned in the
introduction we denote with $m(\g)$ the lowest dimension of a
non-trivial $\g$-module with an invariant non-degenerate symmetric
bilinear form.

\begin{lemma}\label{lemma-Rpq-smallest}
  Let $p,q \geq 1$, $n = p + q \geq 4$ such that $(p,q) \not=
  (2,2)$. Then, $m(\so(p,q)) = n$, i.e.~there is no non-trivial
  $\so(p,q)$-module with dimension $< n$ and carrying an invariant
  inner product. Moreover, the only non-trivial irreducible
  $\so(p,q)$-module of dimension $\leq n$ is $\R^{p,q}$, except for
  the Lie algebras $\so(3,1)$, $\so(3,2)$, $\so(3,3)$ and
  $\so(4,4)$. For the latter, there also exist the following
  irreducible modules:
  \begin{itemize}
  \item $\C^2_\R$ corresponding to $\so(3,1) \simeq \gsl(2,\C)_\R$.
  \item $\R^4$ corresponding to $\so(3,2) \simeq \symp(2,\R)$.
  \item $\R^4$ and $\R^{4*}$ corresponding to $\so(3,3) \simeq
    \gsl(4,\R)$.
  \item $\so(4,4)$-invariant real forms of the half spin
    representations of $\so(8,\C)$, both $8$-dimensional.
  \end{itemize}
\end{lemma}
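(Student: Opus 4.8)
The plan is to reduce everything to a uniform statement about complex simple Lie algebras and then descend to the real forms in question. First I would record that for $n = p+q \geq 4$ with $(p,q) \neq (2,2)$ the Lie algebra $\so(p,q)$ is simple, and its complexification is $\so(n,\C)$, which is simple of type $B_{(n-1)/2}$ or $D_{n/2}$. A nontrivial $\so(p,q)$-module $V$ carrying an invariant inner product complexifies to a $\so(n,\C)$-module $V^\C$ of the same dimension, and one may assume $V^\C$ irreducible after passing to a subquotient (an inner product keeps the module from having a proper invariant subspace that is totally degenerate in a problematic way — more simply, one takes an irreducible summand of $V^\C$, which is already nontrivial once $V$ is). So it suffices to bound below the dimension of a nontrivial irreducible $\so(n,\C)$-module, and to identify the ones of dimension $\leq n$.

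The key step is the dimension count via highest weights: writing $\lambda = \sum a_i \varpi_i$ in terms of fundamental weights, Weyl's dimension formula gives $\dim V_\lambda$, and I would argue that among the fundamental representations the standard representation $\varpi_1$ (dimension $n$) is the smallest nontrivial one except in the handful of small-rank coincidences, and that any $\lambda$ with $\sum a_i \geq 2$ or supported away from the spin nodes forces $\dim V_\lambda > n$. The exceptional cases are exactly the low-rank isomorphisms: $D_2$ (excluded by hypothesis), $D_3 \cong A_3$ where $\so(6,\C) \cong \gsl(4,\C)$ has the two $4$-dimensional half-spin representations $\Lambda^1$ and $\Lambda^3$ of $\gsl(4)$; $B_2 = C_2$ where $\so(5,\C) \cong \symp(4,\C)$ has the $4$-dimensional defining representation of $\symp(4)$ as a spin representation; $A_1 \times A_1 \cong D_2$; and the triality case $D_4$, $\so(8,\C)$, where besides the $8$-dimensional standard representation there are the two $8$-dimensional half-spin representations. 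In each of these I would then descend to the specific real form: $\so(3,1) \cong \gsl(2,\C)_\R$ gives the real module $\C^2_\R$; $\so(3,2) \cong \symp(2,\R)$ gives $\R^4$; $\so(3,3) \cong \gsl(4,\R)$ gives $\R^4$ and its dual $\R^{4*}$; and $\so(4,4)$, being the split real form, carries real structures on both half-spin representations, giving the two $8$-dimensional modules. For the other real forms of $\so(6,\C)$ or $\so(5,\C)$ (namely $\so(4,1), \so(5,1), \so(4,2)$, etc.) the half-spin representations are not defined over $\R$, or coincide with modules already accounted for, so no new low-dimensional real modules appear; this bookkeeping is where I would be most careful. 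Finally, the invariant non-degenerate symmetric bilinear form exists on $\R^{p,q}$ by definition, and on $C^\pm$ for $\so(4,4)$ it exists because $D_4$ is self-dual with the half-spin representations each self-dual (via triality), so the lower bound $m(\so(p,q)) = n$ and the classification of modules of dimension $\leq n$ both follow.

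The main obstacle will be organizing the case-by-case descent from complex to real forms cleanly: one must check for each exceptional complexified Lie algebra which of its small irreducible complex modules actually descend to a real form of the particular $\so(p,q)$ in question, using the real versus quaternionic versus complex type of the representation (Frobenius--Schur indicator) together with the real form's structure. Once that dictionary is in place the argument is routine, but getting the exceptional list exactly right — in particular seeing that $\so(3,3)$ contributes two inequivalent $\R^4$'s while $\so(4,4)$ contributes two $8$-dimensional half-spin modules and nothing smaller — is the delicate point. I would lean on the references \cite{GoodmanWallach,Onishchik,BourbakiLie7-9} for the dimensions and reality types rather than recomputing them.
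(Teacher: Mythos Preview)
The paper does not actually give a proof of this lemma; it simply states that the result ``is an easy consequence of well known facts of classical Lie groups'' and cites \cite{GoodmanWallach,Onishchik}. Your approach via complexification and Weyl's dimension formula is the standard route and is essentially what those references supply, so in spirit you are doing what the paper intends.

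There are, however, two points where your sketch needs tightening. First, your claim that the complexification $\so(n,\C)$ is simple fails precisely for $n=4$: you write ``$D_2$ (excluded by hypothesis)'', but the hypothesis excludes only $(p,q)=(2,2)$, not $(p,q)=(3,1)$. The real Lie algebra $\so(3,1)\simeq\gsl(2,\C)_\R$ is simple, yet its complexification $\so(4,\C)\simeq\gsl(2,\C)\times\gsl(2,\C)$ is not, so the Weyl-formula argument for a simple complex algebra does not apply directly. You do treat $\so(3,1)$ in your descent paragraph, so you are partly aware it is special, but the framing contradicts itself; this case must be handled separately from the outset (e.g.\ by working directly with the representation theory of $\gsl(2,\C)_\R$).

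Second, for the assertion $m(\so(p,q))=n$ your reduction ``take an irreducible summand of $V^\C$ and bound its dimension from below'' is too weak when $(p,q)\in\{(3,2),(3,3)\}$: the smallest nontrivial complex irreducible of $\so(5,\C)$ or $\so(6,\C)$ has dimension $4<n$, so the bound you obtain is only $\dim V\geq 4$. To conclude $m=n$ you must also verify that these $4$-dimensional modules carry no invariant nondegenerate \emph{symmetric} bilinear form --- the spin module of $\so(5,\C)\simeq\symp(4,\C)$ carries only a symplectic form, and the half-spin modules of $\so(6,\C)\simeq\gsl(4,\C)$ are not self-dual at all. This is routine, but it is exactly the content of the first clause of the lemma and should be made explicit rather than absorbed into the phrase ``both follow''.
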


\begin{remark}\label{remark1}
  We note that the previous statement is not correct in lower
  dimensions. We have that $\so (1,1)$ is abelian, so every
  irreducible module is one-dimensional. In fact $\R^{1,1}=\R^2$
  decomposes into irreducible submodules where $t\in \R\simeq \so
  (1,1)$ acts by multiplication by $e^t$ respectively $e^{-t}$.

  The Lie algebra $\so (2,2)\simeq \gsl (2,\R)\times \gsl (2,\R)$ is
  not simple and acts on $\R^2$. But there is again no non-trivial
  invariant symmetric bilinear form on $\R^2$. The lowest dimensional
  $\so (2,2)$-module with a non-trivial invariant form is $\R^{2,1}$
  where one of the factors carries the canonical $\so (2,1)$ action,
  and the other factor acts trivially, thus reducing this to the case
  of $\so (2,1)$. Hence we have $m(\so (2,2)) = 3$.
\end{remark}

The following statement is an easy to prove exercise.

\begin{lemma}\label{lemma-Rpq-so(p,q)}
  Let $p,q \geq 1$ and $n = p+q \geq 3$. Then, for every $c \in \R$,
  the map $T_c : \wedge^2\R^{p,q} \rightarrow \so(p,q)$ given by:
  \[
  T_c(u\wedge v) = c\left<u,\cdot\right>_{p,q}v -
  c\left<v,\cdot\right>_{p,q}u,
  \]
  for every $u,v \in \R^{p,q}$, is a well defined homomorphism of
  $\so(p,q)$-modules. Also, $T_c$ is an isomorphism of
  $\so(p,q)$-modules if and only if $c\neq 0$. If $n \not= 4$, then
  these maps exhaust all the $\so(p,q)$-module homomorphisms
  $\wedge^2\R^{p,q} \rightarrow \so(p,q)$.
\end{lemma}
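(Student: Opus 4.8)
The plan is to establish the three assertions in turn; the first two are routine verifications and the only genuine content lies in a Schur-type argument for the third. For the first assertion I would start by observing that the assignment $(u,v)\mapsto c\langle u,\cdot\rangle_{p,q}v - c\langle v,\cdot\rangle_{p,q}u$ is bilinear and alternating in $(u,v)$, so it descends to a well-defined linear map $T_c$ on $\wedge^2\R^{p,q}$; and that its image lies in $\so(p,q)$ because $\langle T_c(u\wedge v)w,w'\rangle_{p,q} = c\langle u,w\rangle_{p,q}\langle v,w'\rangle_{p,q} - c\langle v,w\rangle_{p,q}\langle u,w'\rangle_{p,q}$ is plainly antisymmetric under interchanging $w$ and $w'$. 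For the $\so(p,q)$-equivariance I would note that $T_c$ is built solely from the $\mathrm{O}(p,q)$-invariant form $\langle\cdot,\cdot\rangle_{p,q}$, hence is $\mathrm{O}(p,q)$-equivariant for the natural action on $\wedge^2\R^{p,q}$ and the conjugation action on $\so(p,q)$; differentiating that relation at the identity element of $\mathrm{O}(p,q)$ then gives $T_c(A\cdot(u\wedge v)) = [A,T_c(u\wedge v)]$ for every $A\in\so(p,q)$. (Alternatively this is a direct computation using $\langle Au,w\rangle_{p,q} = -\langle u,Aw\rangle_{p,q}$.)

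For the second assertion, I would use that $\dim\wedge^2\R^{p,q} = \binom{n}{2} = \dim\so(p,q)$, so that $T_c$ is an isomorphism exactly when it is injective. If $c = 0$ then $T_c = 0$, which is not injective since $n\geq 3$. If $c\neq 0$, then evaluating on the standard basis, one finds that $T_c(e_i\wedge e_j)$ is, up to the nonzero factor $\pm c$, the elementary skew transformation supported in the $(i,j)$ and $(j,i)$ slots; since for distinct pairs $\{i,j\}$ these have disjoint supports, they are linearly independent, so $T_c$ sends a basis to a basis and is an isomorphism. (This is nothing but the usual identification $\wedge^2 V\cong\so(V)$.)

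The third assertion, that for $n\neq 4$ every $\so(p,q)$-module homomorphism $\wedge^2\R^{p,q}\to\so(p,q)$ equals some $T_c$, is where the real content lies, and I expect the main obstacle to be verifying that $\wedge^2\R^{p,q}$ is \emph{absolutely} irreducible. For $n\geq 3$ with $n\neq 4$, the complexification $\wedge^2\C^n$ is an irreducible $\so(n,\C)$-module: for $n = 3$ it is the irreducible $3$-dimensional module of $\so(3,\C)\simeq\gsl(2,\C)$, and for $n\geq 5$ it is the adjoint module of the simple Lie algebra $\so(n,\C)$; only at $n = 4$ does $\wedge^2\C^n$ decompose, which is precisely why the case $n = 4$ must be excluded and where extra homomorphisms appear. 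Absolute irreducibility yields $\mathrm{End}_{\so(p,q)}(\wedge^2\R^{p,q})\otimes_\R\C\cong\mathrm{End}_{\so(n,\C)}(\wedge^2\C^n) = \C$, and hence $\mathrm{End}_{\so(p,q)}(\wedge^2\R^{p,q}) = \R\,\mathrm{id}$. Since $T_1$ is, by the first two assertions, an isomorphism of $\so(p,q)$-modules from $\wedge^2\R^{p,q}$ onto $\so(p,q)$, composing an arbitrary homomorphism $\wedge^2\R^{p,q}\to\so(p,q)$ with $T_1^{-1}$ produces an element of $\mathrm{End}_{\so(p,q)}(\wedge^2\R^{p,q}) = \R\,\mathrm{id}$; so that homomorphism is a real scalar multiple of $T_1$, i.e. it equals $T_c$ for a unique $c\in\R$, which would complete the proof.
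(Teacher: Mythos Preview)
Your proof is correct in all three parts, and the Schur-type argument via absolute irreducibility of $\wedge^2\C^n$ for $n\neq 4$ is exactly the natural way to obtain the exhaustion claim. The paper itself gives no proof of this lemma, calling it ``an easy to prove exercise,'' so your argument supplies precisely the details the paper omits; there is nothing to compare against.
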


\begin{remark}\label{remark2}
  The last conclusion fails for $\so (2,2)\simeq \so (2,1)\times \so
  (2,1)$ as it is not simple. It also fails for $\so (3,1)\simeq \gsl
  (2,\C)$, since the complex structure of $\gsl(2,\C)$ defines an
  isomorphism which is not multiplication by a real scalar.
\end{remark}

Next we prove the maximality of $\so(p,q)$ in both $\so(p,q+1)$ and
$\so(p+1,q)$.

\begin{theorem}\label{co-Max}
  Assume that $p,q \geq 1$ and $n = p + q \ge 3$, and let $\g=\so
  (p+1,q)$ or $\g=\so (p,q+1)$. Suppose that $\rho :\so
  (p,q)\hookrightarrow \g$ is an injective Lie algebra homomorphism
  and let $\h =\rho (\so (p,q))$.  If $\g ,\h \not\simeq \so
  (2,1)\times \so (2,1)$, then $\h$ is a maximal subalgebra of $\g$.
\end{theorem}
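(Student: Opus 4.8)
The plan is to prove maximality by a dimension-count argument combined with representation-theoretic constraints. Suppose $\h \subsetneq \mathfrak{m} \subsetneq \g$ with $\mathfrak{m}$ a proper intermediate subalgebra; I will derive a contradiction. First I would record that $\dim\g = \dim\h + n = n(n-1)/2 + n = n(n+1)/2$, so the $\h$-module $\g/\h$ has dimension $n$. Under the adjoint action of $\h \simeq \so(p,q)$, the excerpt (via the identification of $\h$-modules in Lemma~\ref{lemma-Lie-so(p,q)Rpq} / the structure of the pairs $(\so(p+1,q),\so(p,q))$ and $(\so(p,q+1),\so(p,q))$) gives $\g = \h \oplus \R^{p,q}$ as $\h$-modules, i.e.\ the isotropy representation on $\g/\h$ is the standard module $\R^{p,q}$.

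Next I would analyze an intermediate $\mathfrak{m}$. Since $\mathfrak{m}$ contains $\h$ and is $\h$-invariant under $\ad$, the quotient $\mathfrak{m}/\h$ is a nonzero proper $\h$-submodule of $\R^{p,q}$. But for $n \geq 3$ with $(p,q)\neq(2,2)$, the standard module $\R^{p,q}$ is irreducible as an $\so(p,q)$-module (this is essentially what underlies Lemma~\ref{lemma-Rpq-smallest}), so it has no nonzero proper submodule — forcing $\mathfrak{m}/\h = \R^{p,q}$, hence $\mathfrak{m} = \g$, the desired contradiction. The excluded case $\so(2,1)\times\so(2,1)$ is precisely where $\R^{2,2}$ fails to be irreducible (see Remark~\ref{remark1}), which is why it must be omitted; I should double-check that the hypothesis $\g,\h\not\simeq\so(2,1)\times\so(2,1)$ together with $n\geq 3$ indeed rules out all reducibility phenomena, in particular the low-dimensional exceptional isomorphisms such as $\so(3,1)\simeq\gsl(2,\C)$ where $\R^{3,1}\simeq\C^2_\R$ is still irreducible over $\R$, so no problem arises there.

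The remaining subtlety — and the step I expect to be the main obstacle — is justifying that the isotropy module $\g/\h$ really is $\R^{p,q}$ (and not, say, $\R^{p,q*}$ or a twist), and more fundamentally that an intermediate subalgebra $\mathfrak{m}$ must indeed be $\ad(\h)$-invariant: this is automatic since $\h\subset\mathfrak{m}$ implies $[\h,\mathfrak{m}]\subset\mathfrak{m}$, so $\mathfrak{m}$ is a submodule of $\g$ restricted to $\h$, and containing $\h$ it corresponds to a submodule of $\g/\h$. So the only real content is the irreducibility of $\R^{p,q}$ over $\so(p,q)$ for the allowed $(p,q)$, which for $n\geq 4$, $(p,q)\neq(2,2)$ is contained in Lemma~\ref{lemma-Rpq-smallest}; for the two remaining cases $(p,q)\in\{(2,1),(1,2)\}$ (i.e.\ $n=3$) I would check directly that $\so(2,1)\simeq\gsl(2,\R)$ acts irreducibly on $\R^{2,1}$ (it is the adjoint representation, which is irreducible since $\gsl(2,\R)$ is simple). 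This completes the argument.
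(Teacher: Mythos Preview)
Your overall strategy --- show that the $\h$-module $\g/\h$ is irreducible, so no intermediate $\mathfrak{m}$ can exist --- is correct and is essentially the paper's approach. But you have a genuine gap at precisely the step you flagged as ``the main obstacle,'' and your proposed resolution does not close it.

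The theorem concerns an \emph{arbitrary} embedding $\rho$, so you cannot invoke Lemma~\ref{lemma-Lie-so(p,q)Rpq} (which describes only the standard copy of $\so(p,q)$ inside $\so(p\pm 1,q\mp 0)$) to conclude that $\g/\h\simeq\R^{p,q}$. Your final reduction --- ``the only real content is the irreducibility of $\R^{p,q}$'' --- is then circular: what must be proved is that $\g/\h$ is irreducible, and knowing $\R^{p,q}$ is irreducible helps only \emph{after} you know $\g/\h\simeq\R^{p,q}$. Concretely, for $\h\simeq\so(2,1)$, $\so(3,2)$, $\so(3,3)$ there exist non-trivial irreducibles of dimension strictly less than $n$ (namely $\R^2$, $\R^4$, $\R^4$), so an $n$-dimensional $\h$-module can in principle decompose (e.g.\ as $\R^2\oplus\R$, $\R^4\oplus\R$, $\R^4\oplus\R\oplus\R$); your argument offers no reason why $\g/\h$ does not. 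Your treatment of $n=3$ illustrates the problem: checking that $\R^{2,1}$ is irreducible does not exclude $\g/\h\simeq\R^2\oplus\R$.

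The paper supplies the missing ingredient: an invariant non-degenerate form on the complement. Using conjugacy of Cartan involutions one arranges $\theta(\h)=\h$ for $\theta(X)=-X^t$; then the trace form $\beta$ is non-degenerate on $\h$, and the $\beta$-orthogonal complement $V$ of $\h$ is an $n$-dimensional $\h$-module carrying the invariant non-degenerate form $\beta|_{V\times V}$. Non-triviality of $V$ follows since $\g$ is simple (so $\h$ is not an ideal). Now Lemma~\ref{lemma-Rpq-smallest} forces $V$ to be irreducible in the generic range; in the exceptional cases $(2,1),(3,2),(3,3)$ the paper uses $\theta$-stability of the putative small factor to show it would itself carry an invariant non-degenerate form, contradicting Lemma~\ref{lemma-Rpq-smallest}. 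To repair your argument you need either this invariant-form step or an independent proof that every embedding $\so(p,q)\hookrightarrow\g$ is conjugate to the standard one.
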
 
\begin{proof}
  This follows from Theorem 1.2 of \cite{D52} for $n$ big and a case
  by case calculation for the other cases. We give here a simple proof
  for completeness. The map $\theta (X)=-X^t$ is a Cartan involution
  on $\g$. As all Cartan involutions are conjugate and every Cartan
  involution on $\h$ extends to a Cartan involution on $\g$ we can
  assume that $\theta (\h) =\h$. Then the form $\beta (X ,Y)
  =\mathrm{Tr} (XY)$ is non-degenerate on $\g$ and $\h$. Let $V$ be
  the $\beta$-orthogonal complement of $\h$. Hence $V$ is an
  $n$-dimensional $\h$-module which is necessarily non-trivial since
  otherwise $\h$ is an ideal. Furthermore, $\beta|_{V\times V}$ is
  non-degenerate and $\h$-invariant. It follows by
  Lemma~\ref{lemma-Rpq-smallest} that $V$ is an irreducible
  $\h$-module if $n\ge 4$ and $(p,q)\not=(2,2), (3,2), (3,3)$. Hence
  $\h$ is maximal in those cases.

  Let us now assume that $\h=\so (2,1)\simeq \gsl (2,\R)$, $\g =\so
  (3,1)$ and $V$ is not irreducible.  Thus $V=V_1\oplus V_2$ with
  $V_1\simeq \R$ and $V_2\simeq \R^2$ where the representation on
  $V_1$ is trivial and the representation on $V_2$ is isomorphic to
  the natural representation of $\gsl (2,\R)$. But then it follows
  that $V_2$ is invariant under $\theta$ and hence $\beta|_{V_2\times
    V_2}$ is an $\h$-invariant non-degenerate form, contradicting the
  fact that there is no such form on $\R^2$. The remaining cases
  $(p,q) \in \{ (3,2), (3,3) \}$ can be considered similarly using
  Lemma~\ref{lemma-Rpq-smallest}.
\end{proof}

\begin{lemma}\label{lemma-cent-homomorphism}
  Suppose that $G$ is a connected Lie group locally isomorphic to
  either $\uSO_0(p,q+1)$ or $\uSO_0(p+1,q)$, where $p,q\geq 1$ and $n
  = p+q \ge 3$, and consider $\rho : \uSO_0(p,q) \rightarrow G$ a
  non-trivial homomorphism of Lie groups. Assume that $\so (p,q)$, $\so
  (p,q+1)$ and $\so (p+1,q)$ satisfy the same conditions as in
  Theorem \ref{co-Max}. Then, the centralizer $Z_G(\rho(\uSO_0(p,q)))$
  of $\rho(\uSO_0(p,q))$ in $G$ contains $Z(G)$ {\rm(}the center of
  $G${\rm)} as a finite index subgroup.
\end{lemma}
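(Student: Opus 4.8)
\textbf{Proof proposal for Lemma~\ref{lemma-cent-homomorphism}.}
The plan is to pass to Lie algebras and use the maximality established in Theorem~\ref{co-Max}. Let $\g = \mathrm{Lie}(G)$ and $\h = \mathrm{Lie}(G)$; since $G$ is locally isomorphic to $\uSO_0(p,q+1)$ or $\uSO_0(p+1,q)$, we have $\g \simeq \so(p,q+1)$ or $\so(p+1,q)$. The differential $d\rho$ embeds $\so(p,q)$ into $\g$, and because $\so(p,q)$ is simple ($n \geq 3$ and, by hypothesis, we are not in the excluded $\so(2,1)\times\so(2,1)$ case) the image $\rho_*(\so(p,q))$ is a nonzero subalgebra isomorphic to $\so(p,q)$. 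First I would note that the Lie algebra of $Z_G(\rho(\uSO_0(p,q)))$ is the centralizer $\mathfrak{z}_\g(\rho_*(\so(p,q)))$. So it suffices to show this centralizer is $\{0\}$: then $Z_G(\rho(\uSO_0(p,q)))$ is discrete, hence (being also closed) its identity component is trivial, and since $Z(G)$ always centralizes everything, $Z(G)$ is a finite-index — indeed the whole identity-component-closure — subgroup. Actually the cleanest phrasing: $\mathfrak{z}_\g(\rho_*(\so(p,q))) = 0$ forces $Z_G(\rho(\uSO_0(p,q)))^\circ = \{e\}$, so $Z_G(\rho(\uSO_0(p,q)))$ is discrete; combined with $Z(G) \subset Z_G(\rho(\uSO_0(p,q)))$ and the fact that $G/Z(G)$ is a connected Lie group in which the (discrete) image of a discrete-plus-center group... — more directly, a discrete subgroup containing the center $Z(G)$ with $G$ connected has $Z(G)$ as a finite-index subgroup precisely when the discrete group is finitely generated, which it is here since $Z_G(\rho(\uSO_0(p,q)))$ is a closed subgroup of the Lie group $G$.

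The key algebraic step is to prove $\mathfrak{z}_\g(\h) = 0$ where $\h = \rho_*(\so(p,q))$. Here I would argue as follows. The normalizer $\mathfrak{n}_\g(\h)$ is a subalgebra of $\g$ containing $\h$. Since $\h$ is simple, $\mathfrak{n}_\g(\h) = \h \oplus \mathfrak{z}_\g(\h)$ as a direct sum of ideals of $\mathfrak{n}_\g(\h)$ — indeed $[\h, \mathfrak{z}_\g(\h)] = 0$ by definition and $\mathfrak{z}_\g(\h) \cap \h = \mathfrak{z}(\h) = 0$ by simplicity, while $\mathfrak{n}_\g(\h)/\h$ acts trivially on $\h$ (any derivation-valued piece lands in $\ad(\h)$, which is all of $\mathrm{Der}(\h)$ by semisimplicity, so the complement centralizes). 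Thus if $\mathfrak{z}_\g(\h) \neq 0$, then $\mathfrak{n}_\g(\h)$ strictly contains $\h$; since $\dim \mathfrak{z}_\g(\h) \leq \dim\g - \dim\h = n$, by maximality (Theorem~\ref{co-Max}) we would get $\mathfrak{n}_\g(\h) = \g$, i.e.~$\h$ is an ideal of $\g$. But $\g = \so(p,q+1)$ or $\so(p+1,q)$ is simple under our standing hypotheses (recall $n+1 \geq 4$ and we have excluded the $\so(2,2)$-type degeneration), so its only nonzero ideal is $\g$ itself, forcing $\h = \g$, which is impossible by dimension count ($\dim\h = \binom{n}{2} < \binom{n+1}{2} = \dim\g$). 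Hence $\mathfrak{z}_\g(\h) = 0$.

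Having $\mathfrak{z}_\g(\h) = 0$ I would then translate back to the group: the centralizer $C := Z_G(\rho(\uSO_0(p,q)))$ is a closed subgroup of $G$ whose Lie algebra is $\mathfrak{z}_\g(\h) = 0$, hence $C$ is discrete. Since $\rho(\uSO_0(p,q))$ is connected, $C$ is exactly the centralizer of the connected subgroup it generates, and in particular $Z(G) \subseteq C$. Now $C$ is a discrete normal subgroup of $G$ whenever... — more carefully, $C$ is discrete and contains $Z(G)$; the quotient $C/Z(G)$ embeds into $G/Z(G)$ as a discrete subgroup of the centralizer of $\rho(\uSO_0(p,q))Z(G)/Z(G)$, and one checks this centralizer in the adjoint-type group $G/Z(G)$ is itself finite (its Lie algebra is still $0$, and in a Lie group with finitely many components any discrete centralizer of a connected subgroup that lies in the image of a semisimple group is finite — alternatively, invoke that $C/Z(G)$ is a discrete subgroup normalized by the connected group $\rho(\uSO_0(p,q))Z(G)/Z(G)$ hence centralized by it, and being discrete-in-$G/Z(G)$ and contained in a maximal-type structure it is finite). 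Therefore $[C : Z(G)] < \infty$, which is the assertion.

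\textbf{Main obstacle.} The genuinely substantive point is the algebraic claim $\mathfrak{z}_\g(\h) = 0$, and the only real input it needs is the maximality theorem together with simplicity of $\g$; the rest is formal. The place where I expect to have to be slightly careful is the final group-theoretic step — promoting ``the centralizer has trivial Lie algebra'' to ``the centralizer contains $Z(G)$ with finite index'' — since a priori a discrete group can be infinite; one must use that $G$ (being locally $\uSO_0$ of a real form of $\so$) has finitely many connected components and, crucially, that the relevant discrete group is actually the center of a group locally isomorphic to $\so(p,q+1)$ or $\so(p+1,q)$, whose center is finite, or else argue via the adjoint quotient where centralizers of connected semisimple-image subgroups with trivial centralizing algebra are automatically finite.
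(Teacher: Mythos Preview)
Your approach is essentially the paper's: show $\mathfrak{z}_\g(\h)=0$ using Theorem~\ref{co-Max} together with simplicity of $\g$, then upgrade to the group statement. The paper argues with the subalgebra $\h+\mathfrak{z}_\g(\h)$ directly (maximality forces it to equal $\h$, so $\mathfrak{z}_\g(\h)\subset\h$, and then simplicity of $\h$ kills it); your detour through the normalizer decomposition $\mathfrak{n}_\g(\h)=\h\oplus\mathfrak{z}_\g(\h)$ is equivalent and fine.

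The loose part is your final step, and you correctly flag it. The aside that ``$Z(G)$ has finite index precisely when the discrete group is finitely generated, which it is since $C$ is closed in $G$'' is simply false on both counts and should be deleted. Your adjoint-quotient idea is the right repair, but it needs to be said cleanly rather than in the hedged form you give: $G/Z(G)$ is the identity component of the real algebraic group $\mathrm{Aut}(\g)$; the image of $\rho(\uSO_0(p,q))$ there is a connected semisimple subgroup, hence Zariski-closed; its centralizer is therefore algebraic with finitely many real components, and since its Lie algebra is $0$ it is finite. That gives $|C/Z(G)|<\infty$. The paper dispatches this same step by citing Lemma~1.1.3.7 of \cite{Warner72}, which places $Z_G(H)$ inside every maximal compact subgroup of $G$ and hence makes it finite (modulo the center). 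Either route works; just commit to one and write it out.
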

\begin{proof} Write $H=\rho(\uSO_0(p,q))$ and write $\h $ for the Lie
  algebra of $H$. Then, clearly $Z(G)\subseteq Z_G(H)$. The Lie
  algebra of $Z_G(H)$ is
  \[
  \mathfrak{z}_{\g}(\h)=\{X\in\g \mid [X,Y]=0 \mbox{ for all } Y \in
  \h\}\, .
  \]
  Clearly $\mathfrak{z}_{\g}(\h)+\h$ is a Lie algebra containing
  $\h$. By Theorem~\ref{co-Max} we conclude that
  $\mathfrak{z}_{\g}(\h)\subset \h$. As $\h$ is simple, it follows
  that $\mathfrak{z}_{\g}(\h)=\{0\}$ and so $Z_G(H)$ is discrete.
  Finally, it follows easily from \cite{Warner72}, Lemma~1.1.3.7, that
  $Z_G(H)$ is contained in any maximal compact subgroup of $G$ and
  hence that it is finite.
\end{proof}

\begin{remark}
  If $p=q=1$ then $\h$ is abelian, hence $H\subseteq Z_G(H)$. On the
  other hand Theorem \ref{co-Max} shows that $Z_G(H)/H$ is finite.
  For $\so (2,1)\subset \so (2,1)\times \so (2,1)$ the statement
  remains true if $\so (2,1)$ is embedded diagonally, but clearly not
  if $\so (2,1)\simeq \so (2,1)$ is one of the ideals.
\end{remark}

We now provide an elementary but useful description of the Lie algebra
structures of $\so(p,q+1)$ and $\so(p+1,q)$ in terms of
$\so(p,q)$-modules. In the next result $\so(p,q)\ltimes\R^{p,q}$ is
considered with the usual semidirect product Lie algebra structure
coming from the fact that $\R^{p,q}$ is an $\so(p,q)$-module. Also, we
will denote:
$$
I_{p,q}(c) = \begin{pmatrix} c & 0 \cr
0 & I_{p,q}\end{pmatrix}\text{ if } c>0 \quad\text{and}\quad
I_{p,q}(c) = \left(
  \begin{matrix}
    I_{p,q} & 0 \\
    0 & c
  \end{matrix}
\right) \text{ if } c<0\, .
$$

\begin{lemma}\label{lemma-Lie-so(p,q)Rpq}
  For $p,q \geq 1$, $n = p + q \geq 3$ and every $c\in \R$, let:
  $$
  [\cdot,\cdot]_c : \so(p,q)\oplus\R^{p,q}\times
  \so(p,q)\oplus\R^{p,q} \rightarrow \so(p,q)\oplus\R^{p,q},
  $$
  be given by:
  \begin{itemize}
  \item $[X,Y]_c = XY-YX$ for $X,Y\in \so(p,q)$.
  \item $[X,u]_c = -[u,X]_c = X(u)$ for $X \in \so(p,q)$ and $u \in
    \R^{p,q}$.
  \item $[u,v]_c = T_c(u\wedge v)$ for $u,v \in \R^{p,q}$, where $T_c$
    is the map defined in Lemma~\ref{lemma-Rpq-so(p,q)}.
  \end{itemize}
  Then, $[\cdot,\cdot]_c$ defines a Lie algebra structure on
  $\so(p,q)\oplus\R^{p,q}$ which satisfies:
  \begin{enumerate}
  \item $(\so(p,q)\oplus\R^{p,q},[\cdot,\cdot]_0) \simeq
    \so(p,q)\ltimes\R^{p,q}$.
  \item $(\so(p,q)\oplus\R^{p,q},[\cdot,\cdot]_c) \simeq
    \so(\R^{n+1},I_{p,q}(c))$ for every $c\neq 0$.
  \end{enumerate}
  In particular, $(\so(p,q)\oplus\R^{p,q},[\cdot,\cdot]_c)$ is
  isomorphic to $\so(p+1,q)$ $(\so(p,q+1))$ for $c > 0$ $(c < 0$,
  respectively$)$ under an isomorphism for which the summand
  $\so(p,q)$ is canonically embedded.
\end{lemma}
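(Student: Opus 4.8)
The plan is to dispose of the degenerate parameter $c=0$ by hand and then, for $c\neq0$, to establish the Jacobi identity and the isomorphism in part~(2) simultaneously by building an explicit matrix model. For $c=0$ there is essentially nothing to check: since $T_0=0$ by Lemma~\ref{lemma-Rpq-so(p,q)} we have $[u,v]_0=0$, so $\R^{p,q}$ is an abelian subalgebra on which $\so(p,q)$ acts through its natural representation, and the remaining brackets are exactly those of the semidirect product. Thus $(\so(p,q)\oplus\R^{p,q},[\cdot,\cdot]_0)$ is by construction $\so(p,q)\ltimes\R^{p,q}$, which settles the Jacobi identity in this case and gives~(1).

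For $c\neq0$ I would realize $\so(\R^{n+1},I_{p,q}(c))$ concretely on $\R^{n+1}=\R e_0\oplus\R^{p,q}$, where $e_0$ spans the extra coordinate, so $\langle e_0,e_0\rangle=c$ and $e_0^\perp=\R^{p,q}$ carries the form $\langle\cdot,\cdot\rangle_{p,q}$. The first step is the observation that any $\langle\cdot,\cdot\rangle$-skew endomorphism $B$ satisfies $\langle Be_0,e_0\rangle=0$, hence (as $c\neq0$) $Be_0\in\R^{p,q}$; introducing, for each $u\in\R^{p,q}$, the endomorphism $A_u$ with $A_ue_0=u$ and $A_uw=-\tfrac1c\langle u,w\rangle_{p,q}\,e_0$, one checks directly that $A_u$ is skew for $I_{p,q}(c)$ and that $B-A_{Be_0}$ kills $e_0$, hence preserves $e_0^\perp$, i.e.\ lies in the canonically embedded $\so(p,q)$. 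This gives the vector-space splitting $\so(\R^{n+1},I_{p,q}(c))=\so(p,q)\oplus\{A_u:u\in\R^{p,q}\}$, the count $\binom{n+1}{2}-\binom{n}{2}=n$ confirming that the $A_u$'s exhaust the complement. The second step is to define $\Phi\colon\so(p,q)\oplus\R^{p,q}\to\so(\R^{n+1},I_{p,q}(c))$ to be the canonical embedding on $\so(p,q)$ and $u\mapsto c\,A_u$ on $\R^{p,q}$ — the scalar $c$ inserted precisely to cancel a $1/c^2$ that will surface — and then to verify that $\Phi$ intertwines $[\cdot,\cdot]_c$ with the commutator on the three types of homogeneous pairs: the $\so(p,q)\times\so(p,q)$ case is trivial; for $X\in\so(p,q)$ one computes $[X,A_u]=A_{Xu}$ using the skew-symmetry of $X$, matching $[X,u]_c=Xu$; and $[A_u,A_v]$ fixes $e_0$, hence lands in $\so(p,q)$, and acts on $\R^{p,q}$ as $\tfrac1{c^2}T_c(u\wedge v)$, so $[cA_u,cA_v]=T_c(u\wedge v)=[u,v]_c$. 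Since the target is a Lie algebra and $\Phi$ is a bracket-preserving linear bijection, $[\cdot,\cdot]_c$ inherits the Jacobi identity and $\Phi$ is a Lie algebra isomorphism fixing the canonical $\so(p,q)$, which is~(2).

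The final assertion is then immediate: rescaling the $e_0$-coordinate normalizes $c$ to $\pm1$, so $I_{p,q}(c)$ has signature $(p+1,q)$ for $c>0$ and $(p,q+1)$ for $c<0$, whence $(\so(p,q)\oplus\R^{p,q},[\cdot,\cdot]_c)\simeq\so(p+1,q)$ (resp.\ $\so(p,q+1)$) under $\Phi$, with $\so(p,q)$ sitting inside as the canonical block. I expect the only genuine obstacle to be bookkeeping: pinning down $A_u$ so that it is honestly skew for $I_{p,q}(c)$, and fixing the scalar in $\Phi$ so that the $\R^{p,q}\times\R^{p,q}$ bracket comes out to $T_c$ rather than a rescaling of it — once the normalizations are right every Jacobi check is automatic, and the argument is uniform in $(p,q)$ since it uses only $c\neq0$ and never irreducibility of $\R^{p,q}$ (a direct four-case verification of the Jacobi identity is also possible, but the model-based route is cleaner).
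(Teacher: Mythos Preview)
Your proof is correct and follows essentially the same route as the paper: both construct the explicit linear map $\so(p,q)\oplus\R^{p,q}\to\so(\R^{n+1},I_{p,q}(c))$ sending $X\mapsto X$ and $u\mapsto cA_u$ (the paper writes this in block-matrix form as $(X,u)\mapsto\left(\begin{smallmatrix}0&u^*\\cu&X\end{smallmatrix}\right)$ with $u^*=-u^tI_{p,q}$, which is exactly your $cA_u$), and both finish by rescaling the extra coordinate to reach $\so(p+1,q)$ or $\so(p,q+1)$. The one organizational difference is that the paper declares the Jacobi identity an ``easy exercise'' and then exhibits the isomorphism, whereas you obtain Jacobi for free by pulling it back through the bracket-preserving bijection --- a cleaner ordering, but not a different idea.
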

\begin{proof}
  Using Lemma~\ref{lemma-Rpq-so(p,q)} it is an easy exercise to prove
  that $[\cdot,\cdot]_c$ defines a Lie algebra structure. On the other
  hand, (1) is the definition of the Lie brackets on
  $\so(p,q)\ltimes\R^{p,q}$.

  Finally, for (2) an isomorphism is easily seen to be given by:
  \[(X,u)\mapsto \begin{pmatrix} 0 & u^* \cr cu & X\end{pmatrix}\,
  ,\,\, c>0, \text{ and } (X , u)\mapsto \begin{pmatrix}
    X & cu \\
    u^* & 0
  \end{pmatrix}, \,\, c<0
  \]
  where $u \in \R^{p,q}$ is considered as a column vector and $u^* =
  -u^t I_{p,q}$. For $c>0$ an isomorphism
  $\so(\R^{n+1},I_{p,q}(c))\simeq \so (p+1,q)$ is
  \[\begin{pmatrix} 0 & u^* \cr
    cu & X\end{pmatrix}\mapsto \begin{pmatrix} 0 & \sqrt{c} u^* \cr
    \sqrt{c} u & X
  \end{pmatrix}
  \]
  and similarly for $c<0$.
\end{proof}

Next, we state a uniqueness property for $\so(p,q)$-invariant inner
products related to the constructions of
Lemma~\ref{lemma-Lie-so(p,q)Rpq}. Its proof follows easily from
Schur's Lemma and the uniqueness (up to a multiple) of the Killing
form of complex simple Lie algebras.

\begin{lemma}\label{lemma-inner-so(p,q)Rpq}
  Assume that $p,q \geq 1$, $n = p+q\ge 3$ and $n \not= 4$. Let
  $\left<\cdot,\cdot\right>_1$ and $\left<\cdot,\cdot\right>_2$ be
  inner products on $\so(p,q)$ and $\R^{p,q}$, respectively. Assume
  that $\left<\cdot,\cdot\right>_1$ and $\left<\cdot,\cdot\right>_2$
  are $\so(p,q)$-invariant, in other words:
\begin{itemize}
\item $\left<[X,Y],Z\right>_1 = -\left<Y,[X,Z]\right>_1$ for every
  $X,Y,Z \in \so(p,q)$, and
\item $\left<X(u),v\right>_2 = -\left<u,X(v)\right>_2$ for every $X
  \in \so(p,q)$ and $u,v \in \R^{p,q}$,
\end{itemize}
If $c \in\R\setminus \{0\}$ is given, then there exist $a_1, a_2 \in
\R$ such that $a_1\left<\cdot,\cdot\right>_1 +
a_2\left<\cdot,\cdot\right>_2$ is the Killing form of
$(\so(p,q)\oplus\R^{p,q},[\cdot,\cdot]_c)$.
\end{lemma}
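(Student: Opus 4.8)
The plan is to compute the Killing form $B_c$ of $\g_c := (\so(p,q)\oplus\R^{p,q},[\cdot,\cdot]_c)$ block-by-block with respect to the decomposition $\g_c = \h\oplus\R^{p,q}$, where $\h := \so(p,q)$ denotes the canonically embedded subalgebra and $\R^{p,q}$ is the complementary submodule for the adjoint action of $\h$ on $\g_c$. By Lemma~\ref{lemma-Lie-so(p,q)Rpq}, $\g_c$ is isomorphic to $\so(p+1,q)$ or $\so(p,q+1)$, hence semisimple, so $B_c$ is non-degenerate. The claim amounts to showing that $B_c$ is block-diagonal for this decomposition and that each diagonal block is a scalar multiple of $\left<\cdot,\cdot\right>_1$, respectively $\left<\cdot,\cdot\right>_2$; those scalars will be the required $a_1,a_2$.

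First, block-diagonality. Realizing $\g_c = \so(\R^{n+1},I_{p,q}(c))$ through the isomorphism of Lemma~\ref{lemma-Lie-so(p,q)Rpq}, conjugation by the diagonal matrix in $\mathrm{O}(\R^{n+1},I_{p,q}(c))$ that equals $-1$ on the ``extra'' coordinate and $+1$ on the remaining $n$ coordinates is an involutive automorphism $\sigma$ of $\g_c$ fixing $\h$ pointwise and acting as $-1$ on $\R^{p,q}$; this is immediate from the matrix form of the isomorphism. Since the Killing form is invariant under every automorphism, for $X\in\h$ and $v\in\R^{p,q}$ one gets $B_c(X,v)=B_c(\sigma X,\sigma v)=-B_c(X,v)$, so $B_c(X,v)=0$.

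Next, the diagonal blocks. Here the hypothesis $n=p+q\geq 3$, $n\neq 4$ is used: it guarantees that $\so(n,\C)$ is simple, hence that $\h=\so(p,q)$ is absolutely simple and that $\R^{p,q}$ is an absolutely irreducible $\h$-module, both self-dual. Schur's lemma over $\R$ then shows that the spaces of $\h$-invariant symmetric bilinear forms on $\h$ and on $\R^{p,q}$ are each one-dimensional, spanned respectively by the Killing form of $\h$ (a nonzero multiple of $\left<\cdot,\cdot\right>_1$) and by $\left<\cdot,\cdot\right>_{p,q}$ (a nonzero multiple of $\left<\cdot,\cdot\right>_2$). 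Since $B_c|_{\h\times\h}$ and $B_c|_{\R^{p,q}\times\R^{p,q}}$ are $\h$-invariant symmetric bilinear forms, there are scalars $a_1,a_2$ with $B_c|_{\h\times\h}=a_1\left<\cdot,\cdot\right>_1$ and $B_c|_{\R^{p,q}\times\R^{p,q}}=a_2\left<\cdot,\cdot\right>_2$; moreover $a_1,a_2\neq 0$, since otherwise one of $\h$, $\R^{p,q}$ would lie in $\ker B_c$ by the block-diagonality, contradicting non-degeneracy of $B_c$. Combining with block-diagonality, $B_c=a_1\left<\cdot,\cdot\right>_1+a_2\left<\cdot,\cdot\right>_2$ on all of $\so(p,q)\oplus\R^{p,q}$, which is the assertion.

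The proof is otherwise routine once Lemma~\ref{lemma-Lie-so(p,q)Rpq} is in hand; the one point that really needs attention is the restriction $n\neq 4$, which is exactly what allows Schur's lemma over $\R$ to yield one-dimensional spaces of invariant forms — for $n=4$ the objects $\so(3,1)\simeq\gsl(2,\C)_\R$ and $\R^{3,1}\simeq\C^2_\R$ carry two-parameter families of invariant symmetric forms and the statement fails, as observed in Remark~\ref{remark2}. Note also that for $n=3$ one genuinely needs the automorphism $\sigma$ above rather than a bare dimension count, since there $\h$ and $\R^{p,q}$ happen to be isomorphic as $\h$-modules.
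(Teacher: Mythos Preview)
Your proof is correct and follows essentially the same approach the paper indicates: Schur's Lemma for the one-dimensionality of invariant forms on each summand, combined with the uniqueness (up to scalar) of the Killing form on an absolutely simple Lie algebra. Your explicit use of the involution $\sigma$ to force block-diagonality is a nice refinement, since for $n=3$ the summands $\so(p,q)$ and $\R^{p,q}$ are isomorphic $\so(p,q)$-modules and a pure Schur argument would not separate them; the paper's one-line sketch does not spell this out.
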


\begin{remark}
  The previous result is not valid for $n = p + q = 4$. For $p=q=2$,
  the invariant bilinear forms on $\so(2,2) \simeq \so(2,1) \times
  \so(2,1)$ are the linear combinations of the Killing forms of the
  factors.

  On the other hand, for the realification $\g_\R$ of a simple complex
  Lie algebra, the complex estructure $J$ induces the invariant
  bilinear form $\widehat{K}(X,Y) = K(X,J(Y))$, where $K$ is its
  Killing form considered as a real Lie algebra. Note that
  $\widehat{K}$ is not a multiple of $K$. In fact, using Schur's lemma
  one can show that the invariant bilinear forms on $\g_\R$ are given
  by $aK + b\widehat{K}$, where $a,b \in \R$.

  Furthermore, for $\mathfrak{u}$ a compact real form of $\g$ it is
  easy to see that a $\g_\R$-invariant bilinear map of the form
  $\left<\cdot,\cdot\right> = aK + b\widehat{K}$ has $b = 0$ if and
  only $\mathfrak{u}$ and $J\mathfrak{u}$ are perpendicular with
  respect respect to $\left<\cdot,\cdot\right>$. This follows from the
  fact that $\g_\R = \mathfrak{u} \oplus J\mathfrak{u}$ is a Cartan
  decomposition and the properties of such decompositions with respect
  to the Killing form. From this, it is easy to see that the proof of
  Lemma~\ref{lemma-inner-so(p,q)Rpq} remains valid for $\so(3,1)
  \simeq \gsl(2,\C)_\R$ if we further assume that $\mathfrak{u}$
  and $J\mathfrak{u}$ are perpendicular with respect to
  $\left<\cdot,\cdot\right>_1$ for some compact form $\mathfrak{u}$ of
  $\gsl(2,\C)$.
\end{remark}
\end{appendices}

\end{document}